\newcommand\numberthis{\addtocounter{equation}{1}\tag{\theequation}}
\theoremstyle{definition}
\newtheorem{theorem}{Theorem}[section]
\newtheorem{definition}[theorem]{Definition}
\newtheorem{corollary}[theorem]{Corollary}
\newtheorem{lemma}[theorem]{Lemma}
\newtheorem{remark}[theorem]{Remark}
\newtheorem{proposition}[theorem]{Proposition}
\title{Quaternionic K\"{a}hler metrics associated to special K\"{a}hler manifolds with mutually local variations of BPS structures}
\date{\small Department of Mathematics\\
University of Hamburg\\
Bundesstraße 55, D-20146 Hamburg, Germany}
\author{Vicente Cort\'es and Iv\'an Tulli}
\numberwithin{equation}{section}
\begin{document}

\maketitle
\begin{abstract}
    We construct a quaternionic-K\"{a}hler manifold from a conical special K\"{a}hler manifold with a certain type of mutually-local variation of BPS structures. We give global and local explicit formulas for the quaternionic-K\"{a}hler metric, and specify under which conditions it is positive-definite.  Locally, the metric is a deformation of the 1-loop corrected Ferrara-Sabharval metric obtained via the supergravity c-map. The type of quaternionic-K\"{a}hler metrics we obtain are related to work in the physics literature by  S. Alexandrov and S. Banerjee, where they discuss the hypermultiplet moduli space metric of type IIA string theory, with mutually local D-instanton corrections. 
\end{abstract}

\textit{Keywords: quaternionic-K\"{a}hler metrics, variation of BPS structures, HK-QK correspondence, hyperk\"{a}hler metrics, instanton corrections.}\\

\textit{MSC class: 53C26}
\tableofcontents
\section{Introduction}

The supergravity c-map arises in the physics literature in the context of Calabi-Yau compactifications of type IIA and type IIB string theories. Given a fixed Calabi-Yau $3$-fold $X$, the supergravity c-map takes the vector multiplet moduli space $\mathcal{M}^{\text{IIA/B}}_{\text{VM}}(X)$ to the hypermultiplet moduli space $\mathcal{M}^{\text{IIB/A}}_{\text{HM}}(X)$ with its string-tree-level metric $g_{\text{FS}}^0$. The metric $g_{\text{FS}}^0$ is known as the Ferrara-Sabharwal metric \cite{TypeIIgeometry,FSmetric}, and receives both perturbative and non-perturbative quantum corrections. After including the perturbative corrections, one obtains the $1$-loop corrected Ferrara-Sabharwal metric $g_{\text{FS}}^{c}$ \cite{RLSV,1loopcmap}. Mathematically, the 1-loop corrected c-map can be understood as a way of producing a 1-parameter family of quaternionic-K\"{a}hler (QK) manifolds $\{(\overline{N}_c,g_{\text{FS}}^c)\}_{c\in \mathbb{R}}$ from a projective special K\"{a}hler manifold (PSK) \cite{QKPSK}. When one sets the one loop parameter $c\in \mathbb{R}$ to $c=0$, the tree-level metric is recovered.\\ 

On the other hand, there is the much simpler rigid c-map \cite{TypeIIgeometry}, which arises in the context of $4$d $\mathcal{N}=2$ theories compactified on $S^1$.  Mathematically, the rigid c-map can be understood as a way of producing a hyperk\"{a}hler (HK) manifold from an affine special K\"{a}hler (ASK) manifold \cite{SK,ACD}. The HK metric one obtains is sometimes referred to as the ``semi-flat" metric \cite{LCS}.\\

The rigid and the supergravity c-map have been studied in both the mathematics and physics literature, and they can be related via the so-called HK-QK correspondence \cite{HKQK,Conification,QKPSK,WCHKQK}. The HK-QK correspondence takes as input an HK manifold with a rotating $S^1$-action and a certain hyperholomorphic circle bundle; and produces a 1-parameter family of QK manifolds of the same dimension as the HK manifold. By rotating $S^1$-action we mean that it acts by isometries, rotates two of the complex structures, and fixes the remaining complex structure.\\

On the other hand, the QK metric $g^c_{\text{FS}}$ on $\mathcal{M}^{\text{IIA/B}}_{\text{HM}}(X)$ obtained via the 1-loop corrected c-map is expected to receive several non-perturbative quantum corrections in the form of D-instanton and NS$5$-instanton corrections, preserving the QK property. The inclusion of all such corrections has not been fully understood, but a lot of progress has been made in the physics literature via the use of twistor space methods (see for example the extensive reviews \cite{HMreview1,HMreview2} and the references therein). In particular, the inclusion of D-instanton corrections on the type IIA side can be found in \cite{Dinsttwist,WCHKQK}. \\

When considering only the so-called ``mutually-local" D-instantons, the instanton corrected QK metric on $\mathcal{M}^{\text{IIA}}_{\text{HM}}(X)$ has been computed in \cite{HMmetric}. Their computation uses the description of the twistor space of a QK manifold given in \cite{linearpertQK}, together with the description of D-instanton corrections in twistor space given in \cite{Dinsttwist}. Some work is then needed to extract the QK metric from the twistor space data. On the other hand, in \cite{WCHKQK} the twistorial construction of the QK metric with D-instanton corrections was related via QK/HK correspondence to the twistorial construction of the instanton corrected HK metric of the Coulomb branch of  $\mathcal{N}=2$ gauge theories on $\mathbb{R}^3\times S^1$, given in \cite{GMN}. In particular, one should be able to describe the case of mutually-local D-instanton corrections from \cite{HMmetric} in terms of a semi-flat HK geometry corrected by mutually local instanton corrections, together with an appropriate hyperholomorphic line bundle. The description of the instanton corrected HK geometry in \cite{GMN} relies on a set of integral equations, determining the Darboux coordinates of the twistor family of holomorphic symplectic forms of the HK manifold. In the case of mutually local instanton corrections, the integral equations reduce to integral formulas, allowing for an explicit study of this case.\\

The main objective of this work is to give a mathematical treatment of QK metrics involving a mathematical notion of ``mutually local D-instanton corrections". Our methods are 
based on a direct approach to the QK metric avoiding the use of twistor space
and, thus, the technical difficulties arising in the latter framework.  More specifically:

\begin{itemize}
    \item We will start with an integral conical affine special K\"{a}hler (CASK) manifold $(M,g_M,\omega_M,\nabla,\xi,\Lambda)$ (see Definitions \ref{defASKint} and \ref{defCASK}) and a mutually-local variation of BPS structures $(M,\Lambda,Z,\Omega)$ (see Definition \ref{defVarBPS} and \ref{defMutLocBPS}), where $(M,\Lambda,Z)$ is determined by the data of the integral CASK manifold. The notion of variation of BPS structures \cite{VarBPS} is the part of the data that will be used to implement the physical notion of ``D-instanton corrections" to the QK metric. 
    \item Following \cite{GMN}, we will explicitly describe an HK structure  $(N:=T^*M/\Lambda^*,g_N,\omega_1,\omega_2,\omega_3)$ built out of $(M,g_M,\omega_M,\nabla,\xi,\Lambda)$ and $\Omega$, and refer to it as the ``instanton corrected" HK structure. As we mentioned before, the fact that the variation of BPS structures is mutually local circumvents the need of solving the ``GMN integral equations" (since in this case they become integral formulas) and dealing with wall-crossing behavior. Our attitude will be to take the formulas (\ref{holsym}) and (\ref{invKF}) defining the candidate K\"{a}hler forms $\omega_{\alpha}$ for $\alpha=1,2,3$ as an ansatz, and explicitly state under which conditions they define an HK structure without using twistor space arguments. A key notion will be a non-degeneracy condition of a certain tensor field $T$ involving the ASK metric $g_M$ and the BPS indices $\Omega$, that will guarantee the non-degeneracy of $\omega_{\alpha}$ for $\alpha=1,2,3$ (see Definition \ref{defT}). The main result of Section \ref{instcorrsec} is Theorem \ref{theorem1}. This theorem applies to ASK manifolds admitting a central charge (see Definition \ref{centralc}), which includes the CASK case, where there is a canonical choice of central charge (see Proposition \ref{centralchargeCASK}):
    
    \textbf{Theorem 3.13:}  Consider an integral ASK manifold $(M,g_M,\omega_M,\nabla,\Lambda)$ admitting a central charge $Z$, together with a mutually local variation of BPS structures $(M,\Lambda,Z,\Omega)$. Then the triple $(\omega_1,\omega_2,\omega_3)$ of real $2$-forms on $N$ given in (\ref{holsym}) and (\ref{invKF}) defines a pseudo-HK structure on $N$ if and only if the tensor field $T$ on $N$ given in (\ref{non-deg}) is horizontally non-degenerate with respect to the canonical projection $\pi:N\to M$. 

    \item Restricting back to the CASK case, if $J$ is the complex structure on $M$ and $\xi$ the Euler vector field, it was shown in \cite{Conification} that one can lift $J\xi$ to $N$, giving an infinitesimal rotating circle action for the semi-flat HK metric. We will show that the same lift also defines an infinitesimal rotating circle action for the instanton corrected HK structure $(N,g_N,\omega_1,\omega_2,\omega_3)$, see Proposition \ref{rotatingprop}. This, together with the construction of the appropriate hyperholomorphic circle bundle over $N$ in Section \ref{hyperholsec} will allow us to apply the explicit formulas of \cite[Theorem 2]{QKPSK} for the HK-QK correspondence, and hence obtain an ``instanton corrected" QK manifold $(\overline{N},g_{\overline{N}})$. The main result of Section \ref{hyperholsec} is Theorem \ref{theorem2}:
    
    \textbf{Theorem 4.10:} Let  $(M,g_M,\omega_M,\nabla,\xi,\Lambda)$ be a connected integral CASK manifold together with a mutually local $(M,\Lambda,Z,\Omega)$, where $Z$ is the canonical central charge of a CASK manifold. Assume that $T$ is horizontally non-degenerate and that the flow of $\xi$ generates a free-action of the monoid $\mathbb{R}_{\geq 1}$. Furthermore, let $(N,g_N,\omega_1,\omega_2,\omega_3)$ be the associated pseudo-HK manifold; $(P\to N,\eta)$ the associated hyperholomorphic circle bundle; $N'\subset N$ the (non-empty) open subset defined in (\ref{N'}) and $\theta_i^P\in \Omega^1(P)$, $g_P\in \text{Sym}^2(P)$, $X_1^P\in \Gamma(TP)$, $f\in C^{\infty}(P)$ the associated objects defined in Section \ref{HK-QK}. If $\overline{N}\subset P|_{N'}$ is any submanifold transversal to $X_1^P$, then 

\begin{equation}\label{QKmetricglobalintro}
    g_{\overline{N}}:=-\frac{1}{f}\Big(g_P - \frac{2}{f}\sum_{i=0}^3 (\theta_i^P)^2\Big)\Big|_{\overline{N}}
\end{equation}
is a pseudo-QK metric on $\overline{N}$. Furthermore, if $N'_{+}\subset N'$ is as in Lemma \ref{Nrestriction} and $\overline{N}$ is picked to also satisfy $\overline{N}_+:=\overline{N}\cap N_+'\neq \emptyset$, then $g_{\overline{N}}$ is positive definite on $\overline{N}_+$.
    \item In Section \ref{QKdefFS} we apply Theorem \ref{theorem2} to the case of a CASK domain, and describe the resulting $(\overline{N},g_{\overline{N}})$ in coordinates that realize $g_{\overline{N}}$ as a deformation of the $1$-loop corrected Ferrara-Sabharval metric $g_{\text{FS}}^c$, see Theorem \ref{propcoordQK}. We furthermore show in Proposition \ref{signatureQK} that the subset $\overline{N}_{+}$ where $g_{\overline{N}}$ is positive definite in never empty (for our choice of $\overline{N})$, and say something about the fate of the Peccei-Quinn symmetries after instanton corrections in Corollary \ref{PQsymm}. 
    \item Finally, in Section \ref{example} we give a simple example of our constructions. In this example, we consider a CASK domain whose associated PSK manifold is the complex hyperbolic space $\mathbb{C}H^n$ with the  Bergman metric. If $\rho$ denotes the usual dilaton coordinate, our example then gives a deformation $g_{\overline{N}}$ of $g_{FS}^c$ in a neighborhood of $\rho=\infty$, where both $g_{\overline{N}}$ and $g_{FS}^c$ are defined and positive definite (see Corollary \ref{comhyperdef}). In particular, when $n=0$, we obtain an instanton deformation of the universal hypermultiplet, near its infinite end of finite volume (see Corollary \ref{finitevolend}).
\end{itemize}

Regarding our results and the related work \cite{HMmetric} in the physics literature:
\begin{itemize}
    \item  The argument we follow is similar in spirit to the one in \cite{WCHKQK}, and gives a formal mathematical proof that the final metric is QK. Furthermore, we obtain a global formula for the QK metric (\ref{QKmetricglobalintro}), while in \cite{HMmetric} only a local formula is found. 
    \item In the local case of a CASK domain, the coordinate expression of $g_{\overline{N}}$ in Theorem \ref{propcoordQK} should be compared with the corresponding expression in \cite{HMmetric}. In our case, $g_{\overline{N}}$ turns out to be completely explicit, avoiding the use of the implicitly defined $\mathcal{R}$-parameter in \cite{HMmetric}. We do however express everything in terms of ``classical coordinates", whereas in \cite{HMmetric} a ``quantum corrected" dilaton coordinate is used. See also Section \ref{commentssec} for some further comments on this. 
    \item On the other hand, some effort has been made in indicating which expressions are due to ``instanton corrections". This allows for a direct comparison with the usual 1-loop corrected Ferrara-Sabharval metric $g^c_{\text{FS}}$, and realize the QK metric we obtain in the case of a CASK domain as  deformations of $g^c_{\text{FS}}$.

\end{itemize}

\textbf{Acknowledgements:} This work was supported by the Deutsche Forschungsgemeinschaft (German Research Foundation) under Germany’s Excellence Strategy -- EXC 2121 ``Quantum Universe'' -- 390833306. The idea for this work originated from discussions within our Swampland seminar, which is part of the Cluster of Excellence Quantum Universe. The authors would like to thank Murad Alim, J\"{o}rg Teschner, Timo Weigand and Alexander Westphal for the aforementioned discussions. Furthermore, the authors would like to thank Danu Thung for comments on the draft.

\section{Review of special K\"{a}hler manifolds and the rigid c-map}

In this section we collect a few facts about affine special K\"{a}hler (ASK) manifolds, the rigid c-map, conical affine special K\"{a}hler (CASK) manifolds and projective special K\"{a}hler (PSK) manifolds. Our main references are \cite{SK,ACD,CM}.\\

In the final subsection \ref{GMNsemiflat} we give a description of the semi-flat hyperk\"{a}hler metric in terms of the notion of central charges. This description will be convenient for the constructions of the later sections. 

\begin{subsection}{Affine special K\"{a}hler manifolds}

\begin{definition}
an affine special K\"{a}hler (ASK) manifold is a tuple $(M,g,\omega,\nabla)$, where:

\begin{itemize}
    \item $(M,g,\omega)$ is a pseudo-K\"{a}hler manifold. We denote by $J$ the corresponding complex structure determined by the relation $g(J-,-)=\omega(-,-)$.
    \item $\nabla$ is a flat, torsion-free connection on $M$.
    \item $\nabla \omega=0$ and $d_{\nabla}J=0$, where $d_{\nabla}:  \Omega^n_M(TM)\to \Omega^{n+1}_M(TM)$ is the extension of $\nabla: \Omega^0_M(TM)\to \Omega^1_M(TM)$  to higher degree forms, and we think of $J$ as an element of $\Omega^1_M(TM)$. 
\end{itemize}
\end{definition}

The fact that $\nabla$ is flat and torsion-free, together with $\nabla \omega=0$, implies that we can find $\nabla$-flat Darboux coordinates $(x^i,y_i)$ for $\omega$, i.e:

\begin{equation}\label{SAC}
    \omega=dx^i\wedge dy_i \;\;\;\; \text{with} \;\;\;\; \nabla dx^i=0, \;\; \nabla dy_i=0\,.
\end{equation}
\begin{definition}
Coordinates $(x^i,y_i)$ in an ASK manifold satisfying (\ref{SAC}) are called affine special coordinates. 
\end{definition}

On the other hand, the torsion free condition can be written as $d_{\nabla}(\text{Id})=0$ where $\text{Id} \in \Omega^1_M(TM)$ is the identity map. Together with  $d_{\nabla}J=0$, this implies that the projection $\pi^{1,0}=\frac{1}{2}(\text{Id}-iJ)$ satisfies 

\begin{equation}
    d_{\nabla}\pi^{1,0}=d_{\nabla}\Big(\frac{1}{2}(\text{Id}-iJ)\Big)=0\,.
\end{equation}
By the Poincar\'e lemma, we can locally find a complex vector field $\xi^{1,0}$ on $M$ such that
\begin{equation}
    \nabla \xi^{1,0}=\pi^{1,0}\,.
\end{equation}
Consider a local flat Darboux frame $(\widetilde{\gamma}_i,\gamma^i)$ with respect to $\omega$. We can then write

\begin{equation}\label{HSC}
        \xi^{1,0}=\frac{1}{2}(z^i\widetilde{\gamma}_i-w_i\gamma^i)\,,
    \end{equation}
and obtain local complex valued functions $z^i$ and $w_i$ on $M$. The flatness of $(\widetilde{\gamma}_i,\gamma^i)$ together with $\nabla \xi^{1,0}=\pi^{1,0}$ imply that $z^i$ and $w_i$ must be holomorphic. Furthermore, if we define $x^i:=\text{Re}(z^i)$ and $y_i:=-\text{Re}(w_i)$, then using the fact that $2\text{Re}(\nabla \xi^{1,0})=2\text{Re}(\pi^{1,0})=\text{Id}$, we find that $(x^i,y_i)$ is a local coordinate system and that $\partial_{x^i}=\widetilde{\gamma}_i$, $\partial_{y_i}=\gamma^i$. In particular $(x^i,y_i)$ is an affine special coordinate system and $(z^i)$, $(w_i)$ are systems of holomorphic coordinates.

\begin{definition}
A pair of holomorphic coordinate systems $(z^i)$ and $(w_i)$ satisfying (\ref{HSC}) is called a conjugate system of special holomorphic coordinates.
\end{definition}
 With respect to the special holomorphic coordinates $z^i$ (or $w_i$), one can locally describe the ASK geometry in terms of a holomorphic function $\mathfrak{F}(z^i)$ (usually known as the holomorphic prepotential) satisfying 

\begin{equation}
    w_i=\frac{\partial{\mathfrak{F}}}{\partial z^i}\,.
\end{equation}
The function $\mathfrak{F}$ locally describes the ASK geometry in the sense that 

\begin{equation}
    \omega=\frac{i}{2}\text{Im}(\tau_{ij})dz^i\wedge d\overline{z}^j\;\;\;\;\; \text{with} \;\;\;\;\; \tau_{ij}:=\frac{\partial^2 \mathfrak{F}}{\partial z^i \partial z^j}\,.
\end{equation}
We will also make frequent use of the following additional data:
\begin{definition}\label{defASKint} An integral ASK manifold is a tuple $(M,g,\omega,\nabla,\Lambda)$ such that:
\begin{itemize}
    \item $(M,g,\omega,\nabla)$ is an ASK manifold.
    \item  $\Lambda\subset TM$ is a bundle $\Lambda \to M$ of $\nabla$-flat lattices such that $\Lambda \otimes_{\mathbb{Z}} \mathbb{R}=TM$.
    \item Around any $p\in M$, $\Lambda\to M$ admits a Darboux frame with respect to $\omega$.
\end{itemize} 
\end{definition}

\end{subsection}
\subsection{The rigid c-map}

Given an ASK manifold $(M,g,\omega,\nabla)$ of signature $(n,m)$, the rigid c-map associates to it a  pseudo-hyperk\"{a}hler (HK) structure of signature $(2n,2m)$ on the cotangent bundle  $(T^*M,g^{\text{sf}},I_1,I_2,I_3)$. We use the superscript $^{\text{sf}}$ since the metric is ``semi-flat" in the sense that it restricts to a flat metric on the fibers. \\

The HK structure is defined as follows. Let $\pi:T^*M\to M$ be the canonical projection, then the connection $\nabla$ allows us to do a splitting

\begin{equation}
    T(T^*M)=T^h(T^*M)\oplus T^v(T^*M)\,,
\end{equation}
where $T^h(T^*M)\cong \pi^*TM$ and $T^v(T^*M)=\text{Ker}(d\pi)\cong \pi^*T^*M$. With respect to this splitting we have

\begin{equation}\label{c-map}
    g^{\text{sf}}:=g\oplus g^{-1}, \;\;\;\;\; I_3:=J\oplus J^*, \;\;\;\;\; I_1(v,w):= -\omega^{-1}(w) + \omega(v), \;\;\;\;\; I_2:=I_3I_1 \,,
\end{equation}
where in the definition of $I_1$ we think of $\omega$ as a map $\omega: TM\to T^*M$. We remark that the fact that this defines an actual HK structure on $T^*M$ really uses all the special K\"{a}hler conditions (see e.g.\  \cite[Section 2]{SK} or  \cite[Theorem 11]{ACD}). The numbering of the $I_{\alpha}$ is set to match our later conventions.\\

If $(M,g,\omega,\nabla,\Lambda)$ is an integral ASK manifold,  then the dual lattice $\Lambda^*\subset T^*M$ is Lagrangian with respect to the canonical holomorphic symplectic form on $T^*M$ (which coincides with $\omega_1^{\text{sf}}+i\omega_2^{\text{sf}}=g^{\text{sf}}(I_1-,-)+ig^{\text{sf}}(I_2-,-)$), and we have:\\

\begin{theorem}\cite[Theorem 3.1]{C} and  \cite[Theorem 3.4, Theorem 3.8]{SK}: Let $(M,g,\omega,\nabla,\Lambda)$ be an integral ASK manifold. Let $N:=T^*M/\Lambda^*$ and consider the canonical projection  $\pi:N\to M$. Then $\pi:N\to M$ has the structure of an integrable system, and $N$ has a canonical pseudo-HK structure induced from $(T^*M,g^{\text{sf}},I_1,I_2,I_3)$. 
\end{theorem}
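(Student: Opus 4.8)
The plan is to establish the two assertions separately: first, that the semi-flat pseudo-HK structure on $T^*M$ descends to the quotient $N = T^*M/\Lambda^*$; and second, that the induced projection $\pi : N \to M$ is a (complex algebraic) integrable system, i.e.\ a holomorphic Lagrangian torus fibration for the complex structure $I_3$ together with the holomorphic symplectic form $\varpi := \omega_1^{\text{sf}} + i\omega_2^{\text{sf}}$. Throughout I would work with the connection splitting $T(T^*M) = T^h(T^*M) \oplus T^v(T^*M)$ and the identifications $T^h \cong \pi^*TM$, $T^v \cong \pi^*T^*M$, in terms of which, by (\ref{c-map}), all of $g^{\text{sf}}$, $I_1$, $I_2$, $I_3$ are expressed purely through the base tensors $g,\omega,J$ and the tautological fiber pairing, with no explicit dependence on the fiber coordinate.

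For the descent, I would first note that $\Lambda^* \subset T^*M$ acts on the total space by fiberwise translation: a local $\nabla$-flat section $\sigma$ of $\Lambda^*$ gives $t_\sigma : (p,\alpha) \mapsto (p,\alpha+\sigma(p))$, and such maps generate the deck transformations of the covering $T^*M \to N$. The crucial point is that $\nabla$-parallelism of $\sigma$ ensures that $dt_\sigma$ preserves the horizontal–vertical splitting: the horizontal distribution is defined by parallel transport for $\nabla$, and translating by a $\nabla$-parallel section commutes with parallel transport, while on the vertical (vector space) directions a translation has trivial differential. Under the identifications above, $dt_\sigma$ therefore acts as the identity on both $\pi^*TM$ and $\pi^*T^*M$ and carries the base tensors at $p$ to themselves. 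Since $g^{\text{sf}}, I_1, I_2, I_3$ are built only from these, each $t_\sigma$ is a tri-holomorphic isometry, and the pseudo-HK structure passes to $N$. Here the hypothesis that $\Lambda^*$ is $\nabla$-flat, inherited from the flatness of $\Lambda$ in Definition \ref{defASKint}, is exactly what is required.

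For the integrable-system structure I would argue as follows. With respect to $I_3 = J \oplus J^*$ the projection $\pi$ is holomorphic, since $I_3$ preserves the splitting and acts as $J$ on the horizontal part $\cong \pi^*TM$, whence $d\pi \circ I_3 = J \circ d\pi$. A fiber $\pi^{-1}(p) = T^*_pM/\Lambda^*_p$ carries the complex structure $J^*_p$ (the restriction of $I_3$ to the vertical space) and is the quotient of the complex vector space $(T^*_pM, J^*_p)$ by the full-rank lattice $\Lambda^*_p$, hence is a compact complex torus of the same complex dimension as $M$. The form $\varpi$ is the canonical holomorphic symplectic form of $T^*M$ (of type $(2,0)$ for $I_3$, closed and non-degenerate), and it descends to $N$ by the invariance established above. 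Finally the fibers are Lagrangian: for vertical $v,w$ one has $I_1 v \in T^h$ and $I_2 v \in T^h$ by (\ref{c-map}), so $\omega_\alpha^{\text{sf}}(v,w) = g^{\text{sf}}(I_\alpha v, w) = 0$ for $\alpha = 1,2$, because $g^{\text{sf}} = g \oplus g^{-1}$ has no mixed horizontal–vertical terms. Thus $\pi : N \to M$ is a holomorphic Lagrangian torus fibration, i.e.\ the desired integrable system.

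The routine parts are the coordinate checks that $g^{\text{sf}}, I_\alpha, \varpi$ carry no fiber dependence and that $\varpi$ is the canonical form; these follow directly from (\ref{c-map}) and the special-coordinate description. The main thing to get right is the role of flatness: it is precisely $\nabla$-parallelism of the sections of $\Lambda^*$ that forces the translations to preserve the horizontal distribution, and without it neither the metric nor the complex structures would descend. A secondary point requiring care is verifying that $\Lambda^*_p$ is a full-rank (hence cocompact) and Lagrangian lattice, so that the fibers are genuine compact complex tori on which $\varpi$ restricts to zero; this is where the Darboux-frame hypothesis in Definition \ref{defASKint} enters.
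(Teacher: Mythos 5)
Your proposal cannot be checked against an internal proof, because the paper does not actually prove this theorem: it is stated as a quotation of results from the cited references (Theorem 3.1 of [C] and Theorems 3.4, 3.8 of [SK]). Judged on its own merits, your argument is correct, and it is essentially the standard proof underlying those references. The descent step identifies the crucial mechanism: fiberwise translation by a $\nabla$-parallel local section of $\Lambda^*$ preserves the horizontal distribution (translation by a parallel section commutes with parallel transport) and acts as the identity under the identifications $T^h\cong \pi^*TM$, $T^v\cong \pi^*T^*M$, so every tensor in (\ref{c-map}) is invariant and descends; since the quotient map is a local diffeomorphism, integrability of the complex structures and closedness of the K\"ahler forms descend as well. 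The integrable-system step is likewise sound: $d\pi\circ I_3 = J\circ d\pi$ because $I_3$ preserves the splitting and acts as $J$ on the horizontal part, the fibers $T^*_pM/\Lambda^*_p$ are compact complex tori because $\Lambda\otimes_{\mathbb{Z}}\mathbb{R}=TM$ forces $\Lambda^*_p$ to be a full-rank lattice, and they are Lagrangian for $\varpi=\omega_1^{\text{sf}}+i\omega_2^{\text{sf}}$ because $I_1$ and $I_2$ exchange horizontal and vertical subspaces while $g^{\text{sf}}=g\oplus g^{-1}$ is block diagonal.

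Two small precision points. First, your parenthetical claim that this is a \emph{complex algebraic} integrable system asserts more than you prove (and more than the theorem states): algebraicity would require the fibers to be abelian varieties, i.e.\ to carry a polarization, and in the pseudo-K\"ahler setting the natural pairing induced by $\omega|_{\Lambda\times\Lambda}$ need not be positive. What you actually establish --- a holomorphic Lagrangian torus fibration --- is exactly the content of the statement. Second, your closing sentence misattributes the role of the Darboux-frame hypothesis of Definition \ref{defASKint}: the Lagrangian property of the fibers follows from the block-structure argument you yourself gave, and cocompactness follows from full-rankness of $\Lambda^*_p$; the Darboux condition is not needed for this theorem, but rather for later constructions in the paper (integral special coordinates, central charges, and the identification of $N$ with the bundle of unitary characters).
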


\begin{subsection}{CASK and PSK manifolds}\label{CASKPSKdef}

\begin{definition}\label{defCASK}
A conical affine special K\"{a}hler (CASK) manifold is a tuple $(M,g,\omega,\nabla,\xi)$ where:
\begin{itemize}
    \item $(M,g,\omega,\nabla)$ is an ASK manifold. We denote the complex structure by $J$.
    \item $\nabla \xi= D\xi=\text{Id}$, where $D$ denotes the Levi-Civita connection.
    \item $g$ is positive definite on $\mathcal{D}=\text{span}\{\xi,J\xi\}$ and negative definite on $\mathcal{D}^{\perp}$.
\end{itemize}
Furthermore, an integral CASK manifold $(M,g,\omega,\nabla,\xi,\Lambda)$ is just an integral ASK manifold that is also CASK (see Definition \ref{defASKint}).
\end{definition}

The vector fields $\xi$ and $J\xi$  satisfy the following identities with the Lie derivative: $\mathcal{L}_{\xi}J=\mathcal{L}_{J\xi}J=0$, $\mathcal{L}_{\xi}g=2g$, $\mathcal{L}_{J\xi}g=0$, and $\mathcal{L}_{\xi}J\xi=0$ \cite{CM}. In other words, $\xi$ is holomorphic and homothetic, $J\xi$ is holomorphic and Killing, and they commute. \\

In some cases we will assume that the holomorphic Killing vector $J\xi$ generates a free $S^1$-action on $M$, and that the holomorphic homothetic vector $\xi$ generates a free $\mathbb{R}_{>0}$-action on $M$. Under such a condition, the function $\mu:=r^2/2$ where $r:=\sqrt{g(\xi,\xi)}$, is a moment map for the $S^1$-action, and if we define

\begin{equation}
    S:=\{p \in M \;\;\; |\;\;\; g_p(\xi,\xi)=1\}\,,
\end{equation}
then $-g|_{S}$ induces a positive definite K\"{a}hler metric $g_{\overline{M}}$ on $\overline{M}:=S/S^1_{J\xi}=M//S^1_{J\xi}$ (i.e. the K\"{a}hler quotient). The relations between $g$, $g|_S$ and $g_{\overline{M}}$ can be summarized as follows \cite{QKPSK}: let $\pi_{S}:M\to M/\mathbb{R}_{>0}=S$ and $\pi_{\overline{M}}:S\to S/S^1=\overline{M}$ be the projections, and let
    \begin{equation}\label{contform}
        \widetilde{\eta}:=\frac{1}{r^2}g(J\xi,-)=d^c \log(r)=i(\overline{\partial}-\partial)\log(r)\,.
    \end{equation}
    We then have
    \begin{equation}\label{metriccone}
        g=dr^2+r^2\pi_S^*g|_S \;\;\;\;\;\;\; g|_S=\widetilde{\eta}^2|_S - \pi^*_{\overline{M}}g_{\overline{M}}\,.
    \end{equation}

\begin{definition}
A projective special K\"{a}hler (PSK) manifold is a K\"{a}hler manifold $(\overline{M},g_{\overline{M}},\omega_{\overline{M}})$ obtained from a CASK manifold by the K\"{a}hler quotient from before. 
\end{definition}

Given a CASK manifold $(M,g,\omega,\nabla,\xi)$ we can locally  find a special affine coordinate system $(x^i,y_i)$ such that \cite{CM}:
    \begin{equation}\label{CSAC}
        \xi=x^i\partial_{x^i}+y_i\partial_{y_i}\,.
    \end{equation}
\begin{definition}
A special affine coordinate system $(x^i,y_i)$ satisfying (\ref{CSAC}) is called a conical special affine coordinate system.
\end{definition}
 On the other hand, the global complex vector field 
    \begin{equation}\label{canonical10Euler}
        \xi^{1,0}:=\frac{1}{2}(\xi -i J \xi)
    \end{equation}
    satisfies $\nabla \xi^{1,0}=\pi^{1,0}$, where $\pi^{1,0}=\frac{1}{2}(\text{Id}-iJ)$ is the projection $ TM\otimes \mathbb{C}\to T^{1,0}M$ \cite{CM}. By picking a local flat Darboux frame $(\widetilde{\gamma_i},\gamma^i)$ for $\omega$ we can write
    
    \begin{equation}\label{CSHC}
        \xi^{1,0}=\frac{1}{2}(z^i\widetilde{\gamma}_i-w_i\gamma^i)
    \end{equation}
    and obtain holomorphic coordinate systems $(z^i)$ and $(w_i)$. It is easy to check that $(x^i:=\text{Re}(z^i),y_i:=-\text{Re}(w_i))$ give a system of conical special affine coordinates. 
    \begin{definition}\label{conicalholspecialcoords}
    When $(z^i)$ and $(w_i)$ are defined by (\ref{CSHC}) using the globally defined $\xi^{1,0}=\frac{1}{2}(\xi-iJ\xi)$ we will call $(z^i)$ and $(w_i)$ a conjugate system of conical special holomorphic coordinates. \end{definition}

     If $(z^i)$ and $(w_i)$ are a conjugate system of conical special holomorphic coordinates, then they are homogeneous of degree $1$ with respect to the (local) $\mathbb{C}^{\times}$-action generated by $\{\xi,J\xi\}$ (i.e. $\mathcal{L}_{\xi}z^j=z^j$, $\mathcal{L}_{J\xi}z^j=iz^j$, $\mathcal{L}_{\xi}w^j=w^j$, $\mathcal{L}_{J\xi}w^j=iw^j$). In particular, this implies that $\mathcal{L}_{\xi^{1,0}}z^i=z^i$, so that
     
     \begin{equation}\label{Ctimesgen}
         \xi^{1,0}=z^i\partial_{z^i}\,.
     \end{equation}
If we define $\tau_{ij}$ by the relation $dw_i=\tau_{ij}dz^j$, then a consequence of (\ref{CSHC}) and (\ref{Ctimesgen}) is that $w_i=\tau_{ij}z^j$. This furthermore implies that $\mathfrak{F}(z^i):=\frac{1}{2}\tau_{ij}z^iz^j$ is a holomorphic prepotential for the CASK geometry. \\
    
    Finally, we remark that for a CASK manifold, the map $r:=\sqrt{g(\xi,\xi)}$ gives a global K\"{a}hler potential for $\omega$:
    \begin{equation}\label{globalKP}
        \omega=\frac{i}{2}\partial \overline{\partial}r^2\,.
    \end{equation}
    In conical holomorphic special coordinates $(z^i)$, this follows from $\frac{\partial\tau_{ij}}{\partial{z}^k}z^j=0$ (which in turn follows from the CASK relation $w_i=\tau_{ij}z^j$) together with $r^2=\text{Im}(\tau_{ij})z^i\overline{z}^j$.

\end{subsection}

\begin{subsection}{Central charges and the semi-flat metric}\label{GMNsemiflat}

Let $(M,g,\omega,\nabla,\Lambda)$ be an integral ASK manifold. Here we present another description of the associated semi-flat HK metric $(N=T^*M/\Lambda^*,g^{\text{sf}},\omega_1,\omega_2,\omega_3)$. This description is closer to the language of $4d$ $\mathcal{N}=2$ SUSY theories, and will be more useful when we include the ``instanton corrections" in the form of variations of BPS structures \cite{GMN,NewCHK}. 

\begin{definition}
Given an integral ASK manifold $(M,g,\omega,\nabla,\Lambda)$, $\omega|_{\Lambda \times \Lambda}$ defines an integral skew pairing on $\Lambda$ that we will denote by

\begin{equation}
    \langle - , - \rangle := \omega|_{\Lambda\times \Lambda}: \Lambda \times \Lambda \to \mathbb{Z}\,.
\end{equation}

By the definition of an integral ASK manifold, $\langle - , - \rangle$ admits local Darboux frames $(\widetilde{\gamma}_i,\gamma^i)$. Our convention will be that $\langle \widetilde{\gamma}_i,\gamma^j \rangle=\delta_i^j$.
\end{definition}

Let $(\widetilde{\gamma}_i,\gamma^i)$ be a Darboux frame of $\Lambda$ over $U\subset M$. By possibly restricting $U$ we can find a local complex vector field $\xi^{1,0}$ on $U$ such that $\nabla \xi^{1,0}=\pi^{1,0}$, and consider the corresponding system of conjugate special holomorphic coordinates $(z^i)$ and $(w_i)$ determined by

    \begin{equation}\label{intspecial}
        \xi^{1,0}=\frac{1}{2}\Big( z^i\widetilde{\gamma}_i- w_i\gamma^i\Big)\,.
    \end{equation} 
\begin{definition}
A conjugate system of holomorphic special coordinates $(z^i)$, $(w_i)$ defined by (\ref{intspecial}) with respect to a local Darboux frame of $\Lambda$ will be called a conjugate system of integral holomorphic special coordinates. 
\end{definition}

Two such overlapping systems of conjugate holomorphic special coordinates are related by a (constant) transformation in $\mathbb{C}^{2n}\rtimes Sp(2n,\mathbb{Z})$ (the $\mathbb{C}^{2n}$ factor is there because we can always shift $\xi^{1,0}$ by a complex parallel vector field when solving $\nabla \xi^{1,0}=\pi^{1,0}$). 

\begin{proposition} \label{centralchargeprop} The following are equivalent:
\begin{itemize}
\item $(M,g,\omega,\nabla,\Lambda)$ admits a covering by conjugate systems of integral holomorphic special coordinates $\{(U_{\alpha},(z^i_{\alpha}),(w_{i,\alpha}))\}_{\alpha}$ related on overlaps by a (constant) transformation in $Sp(2n,\mathbb{Z})$. 
    \item There is a holomorphic section $Z$ of $\Lambda^*\otimes \mathbb{C}\to M$ such that for any local Darboux frame $(\widetilde{\gamma}_i,\gamma^i)$ of $\Lambda$, the holomorphic functions $(Z_{\gamma^i})$ and $(Z_{\widetilde{\gamma}_i})$ give a conjugate system of integral holomorphic special coordinates. 
\end{itemize}

\end{proposition}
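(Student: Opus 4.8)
The plan is to show that each of the two statements is equivalent to the existence of a single globally defined holomorphic vector field $\xi^{1,0}$ on $M$ solving $\nabla\xi^{1,0}=\pi^{1,0}$, passing between such a vector field and a section of $\Lambda^*\otimes\mathbb{C}$ by means of the symplectic pairing $\langle-,-\rangle$. First I would set up the precise dictionary. Given a local Darboux frame $(\widetilde\gamma_i,\gamma^i)$ of $\Lambda$ and a local solution $\xi^{1,0}$ of $\nabla\xi^{1,0}=\pi^{1,0}$, the associated integral holomorphic special coordinates $(z^i,w_i)$ are read off from (\ref{intspecial}); pairing that identity with the frame and using $\langle\widetilde\gamma_i,\gamma^j\rangle=\delta_i^j$ gives $2\,\omega(\xi^{1,0},\gamma^i)=z^i$ and $2\,\omega(\xi^{1,0},\widetilde\gamma_i)=w_i$. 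Hence $\xi^{1,0}\mapsto Z:=2\,\omega(\xi^{1,0},-)|_{\Lambda}$ sends a local solution to a local section of $\Lambda^*\otimes\mathbb{C}$ whose components in the frame are exactly $(z^i,w_i)$, and its inverse is $Z\mapsto\xi^{1,0}:=\tfrac12\,\omega^{-1}(Z)$. Because the $\nabla$-flat sections of $\Lambda^*$ furnish a local frame in which holomorphicity is detected componentwise, holomorphicity of $Z$ is equivalent to holomorphicity of $(z^i,w_i)$, while the equation $\nabla\xi^{1,0}=\pi^{1,0}$ is precisely what forces $(z^i,w_i)$ to be an (integral holomorphic) special coordinate system.

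The essential remark is that the contraction $\tfrac12(z^i\widetilde\gamma_i-w_i\gamma^i)$ is invariant under a simultaneous $Sp(2n,\mathbb{Z})$ change of the Darboux frame and the contragredient change of $(z^i,w_i)$. Since any two Darboux frames of $\Lambda$ differ by a transition in $Sp(2n,\mathbb{Z})$, and the general transition law for integral special coordinates is the affine group $\mathbb{C}^{2n}\rtimes Sp(2n,\mathbb{Z})$ whose $\mathbb{C}^{2n}$-factor measures the $\nabla$-parallel difference of the two local solutions $\xi^{1,0}$, it follows that two overlapping systems are related by a transformation in the linear subgroup $Sp(2n,\mathbb{Z})$ (that is, with no $\mathbb{C}^{2n}$-shift) if and only if their associated local vector fields $\xi^{1,0}$ coincide on the overlap.

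For $(1)\Rightarrow(2)$ I would take a covering with transitions in $Sp(2n,\mathbb{Z})$; by the remark the local solutions $\xi^{1,0}_\alpha$ agree on overlaps and glue to a global holomorphic $\xi^{1,0}$ with $\nabla\xi^{1,0}=\pi^{1,0}$, and then $Z:=2\,\omega(\xi^{1,0},-)|_{\Lambda}$ is a global holomorphic section of $\Lambda^*\otimes\mathbb{C}$ whose components in any Darboux frame form, by the dictionary, an integral holomorphic special coordinate system — which is (2). For $(2)\Rightarrow(1)$, given the global $Z$ I would set $\xi^{1,0}:=\tfrac12\,\omega^{-1}(Z)$; on each patch the definition in (2) supplies a local special-coordinate solution that, via the dictionary, coincides with $\tfrac12\,\omega^{-1}(Z)$, so this $\xi^{1,0}$ is a single global solution of $\nabla\xi^{1,0}=\pi^{1,0}$. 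Choosing any cover by Darboux frames of $\Lambda$ and reading off the coordinates from this one $\xi^{1,0}$ yields integral special coordinate systems whose transitions are exactly the frame changes in $Sp(2n,\mathbb{Z})$, with vanishing $\mathbb{C}^{2n}$-shift precisely because $\xi^{1,0}$ is the same in every patch; this is (1).

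The purely computational checks — that $2\,\omega(\xi^{1,0},-)$ has the stated components, that it is holomorphic as a section of $\Lambda^*\otimes\mathbb{C}$, and that $\tfrac12\,\omega^{-1}(Z)$ recovers the local special-coordinate vector fields (hence is $T^{1,0}$-valued and solves the defining equation) — all reduce to the nondegeneracy of $\langle-,-\rangle$ and the single pairing computation of the first paragraph. The one point that I expect to be the crux, and which carries the actual content of the proposition, is the equivalence established in the second paragraph: that the vanishing of the $\mathbb{C}^{2n}$-shift in the transitions is the same as the gluing of the local $\xi^{1,0}_\alpha$ into a global vector field. This is exactly the translation of the reduction of the structure group from $\mathbb{C}^{2n}\rtimes Sp(2n,\mathbb{Z})$ to $Sp(2n,\mathbb{Z})$ into the existence of a globally defined central charge.
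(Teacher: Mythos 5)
Your proof is correct and takes essentially the same approach as the paper: both arguments pivot on the correspondence $Z=2\,\omega(\xi^{1,0},-)|_{\Lambda}$ (stated in the paper right after the proposition) and on the observation that an $Sp(2n,\mathbb{Z})$-transition, i.e.\ vanishing $\mathbb{C}^{2n}$-shift, is exactly the condition for the local data to agree on overlaps. The only cosmetic difference is that you glue the local vector fields $\xi^{1,0}_{\alpha}$ and then define $Z$ globally, whereas the paper glues the componentwise-defined local sections $Z_{\alpha}$ directly and invokes $\xi^{1,0}$ only to check that an arbitrary Darboux frame yields integral special coordinates.
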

\begin{remark} In the above proposition $\Lambda^*\to M$ denotes the dual of $\Lambda \to M$. Furthermore, for $\gamma$ a section $\Lambda$ over $U\subset M$, $Z_{\gamma}:U\to \mathbb{C}$ denotes the corresponding holomorphic function obtained by contracting with $\gamma$.

\end{remark}
\begin{proof}
Consider $(U_{\alpha},\{z^i_{\alpha}\},\{w_{i,\alpha}\})$ for some $\alpha$, and the corresponding Darboux frame $(\widetilde{\gamma}_{i,\alpha},\gamma^i_{\alpha})$ of $\Lambda$ over $U_{\alpha}\subset M$. We locally define $Z:U_{\alpha} \to \Lambda^*\otimes \mathbb{C}$ as follows: given a section $\gamma$ of $\Lambda|_{U_{\alpha}}$ 
we can write  $\gamma=n^i\widetilde{\gamma}_{i,\alpha}+n_i\gamma^i_{\alpha}$, and then define
    \begin{equation}
      Z_{\gamma}=n^iw_{i,\alpha}+n_iz^i_{\alpha}\,.
    \end{equation}
    The fact that the conjugate systems of holomorphic special coordinates are related by a transformation in $Sp(2n,\mathbb{Z})$ then implies that the local definitions of $Z$ glue together into a holomorphic section $Z:M \to \Lambda^*\otimes \mathbb{C}$. Furthermore, given any other Darboux frame $(\widetilde{\gamma_i},\gamma^i)$ of $\Lambda$ over $U$, it is easy to check that if $U\cap U_{\alpha}\neq \emptyset$ then 
    \begin{equation}
         \frac{1}{2}\Big( Z_{\gamma^i}\widetilde{\gamma}_i - Z_{\widetilde{\gamma}_i}\gamma^i\Big)=\frac{1}{2}\Big(Z_{\gamma^i_{\alpha}}\widetilde{\gamma}_{i,\alpha}-Z_{\widetilde{\gamma}_{i,\alpha}}\gamma^i_{\alpha}\Big)=\xi^{1,0}\,,
    \end{equation}
    so $\{Z_{\gamma^i}\}$ and $\{Z_{\widetilde{\gamma}_i}\}$ give a conjugate system of integral holomorphic special coordinates. \\
    
    On the other hand, it is clear that if $Z:M \to \Lambda^*\otimes \mathbb{C}$ exists, then the required cover exists. 
\end{proof}
For future reference we note that the section $Z$ of $\Lambda^*\otimes \mathbb{C}$ has the following expansion in the dual Darboux frame  
\begin{equation} \label{Z:eq} Z = Z_{\gamma^i}(\gamma^i)^* + Z_{\tilde{\gamma}_i}(\widetilde{\gamma}_i)^*.\end{equation}
It is related to $\xi^{1,0}$ via $\omega( \xi^{1,0},-)|_{\Lambda} = \frac12 Z$.
\begin{definition}\label{centralc} We will say that an integral ASK manifold $(M,g,\omega,\nabla,\Lambda)$ admits a central charge homomorphism if it has a holomorphic section $Z:M \to \Lambda^*\otimes \mathbb{C}$ satisfying the condition of Proposition \ref{centralchargeprop}.
\end{definition}

\begin{proposition}\label{centralchargeCASK}
An integral CASK manifold $(M,g,\omega,\nabla,\xi,\Lambda)$ has a canonical
central charge homomorphism $Z:M\to \Lambda^*\otimes \mathbb{C}$,
which is unique up to the action of the group of global sections of $Sp(\Lambda )$. If $(\widetilde{\gamma}_i,\gamma^i)$ is a Darboux frame of $\Lambda$, then $(Z_{\gamma^i})$ and $(Z_{\widetilde{\gamma}_i})$ gives a conjugate system of integral, conical, holomorphic special coordinates. 
\end{proposition}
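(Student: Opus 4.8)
The plan is to exploit the feature that distinguishes a CASK manifold from a general integral ASK manifold: it carries the \emph{globally defined} complex vector field $\xi^{1,0}=\frac12(\xi-iJ\xi)$ of (\ref{canonical10Euler}), which solves $\nabla\xi^{1,0}=\pi^{1,0}$ on all of $M$. This single global solution is exactly what removes the translational ($\mathbb{C}^{2n}$) ambiguity that, in the general ASK case, obstructs passing from the full group $\mathbb{C}^{2n}\rtimes Sp(2n,\mathbb{Z})$ down to $Sp(2n,\mathbb{Z})$, and hence obstructs the existence of a central charge.

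\textbf{Existence.} I would define the candidate section intrinsically by
\[
    Z:=2\,\omega(\xi^{1,0},-)\big|_{\Lambda}\in\Gamma(\Lambda^*\otimes\mathbb{C}),
\]
which is well defined because $\xi^{1,0}$, $\omega$ and $\Lambda\subset TM$ are all global; this is precisely the relation recorded after (\ref{Z:eq}). For holomorphicity, I would contract $\xi^{1,0}$ against a local $\nabla$-flat Darboux frame $(\widetilde{\gamma}_i,\gamma^i)$ of $\Lambda$: a short computation using $\langle\widetilde{\gamma}_i,\gamma^j\rangle=\delta_i^j$ recovers exactly the functions $z^i,w_i$ determined by $\xi^{1,0}=\frac12(z^i\widetilde{\gamma}_i-w_i\gamma^i)$, so that $Z_{\gamma^i}=z^i$ and $Z_{\widetilde{\gamma}_i}=w_i$. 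Since these are special holomorphic coordinates and the frame is flat, $Z$ is a holomorphic section, with expansion as in (\ref{Z:eq}).

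\textbf{Verifying the central-charge condition.} Next I would invoke Proposition \ref{centralchargeprop}: it suffices to show that for every Darboux frame of $\Lambda$ the functions $(Z_{\gamma^i})$, $(Z_{\widetilde{\gamma}_i})$ form a conjugate system of integral holomorphic special coordinates, glued on overlaps purely by $Sp(2n,\mathbb{Z})$. Both follow from the identification above: because the \emph{same} global $\xi^{1,0}=\frac12(\xi-iJ\xi)$ is used in every chart, the recovered $(z^i,w_i)$ are, by the discussion preceding Definition \ref{conicalholspecialcoords}, a conjugate system of conical special holomorphic coordinates, and overlapping Darboux frames of $\Lambda$ differ by $Sp(2n,\mathbb{Z})$ with \emph{no} accompanying translation, precisely because $\xi^{1,0}$ is never re-chosen. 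Hence the hypothesis of Proposition \ref{centralchargeprop} holds, $Z$ is a central charge, and its associated coordinates are conical, giving the final sentence of the statement.

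\textbf{Uniqueness and the main obstacle.} For the uniqueness clause I would argue that any central charge $Z'$ determines, via $\xi^{1,0}_{Z'}=\tfrac12\omega^{-1}(Z')$, a global solution of $\nabla\xi^{1,0}_{Z'}=\pi^{1,0}$, so that $\xi^{1,0}_{Z'}-\xi^{1,0}$ is a global $\nabla$-parallel complex vector field; requiring $Z'$ to induce \emph{conical} coordinates forces, by Definition \ref{conicalholspecialcoords}, the equality $\xi^{1,0}_{Z'}=\xi^{1,0}$, whence $Z'=Z$. The residual freedom quoted in the statement is that of re-marking the lattice: a global section $\phi$ of $Sp(\Lambda)$ carries Darboux frames to Darboux frames and replaces $Z$ by $Z\circ\phi^{-1}$, again a central charge, so $Z$ is canonical only up to $\Gamma(Sp(\Lambda))$. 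The genuinely non-formal point — and thus the step I expect to require the most care — is the identification step: confirming with the correct normalization that contracting the canonical $\xi^{1,0}$ against a flat Darboux frame reproduces the special holomorphic coordinates, and, conceptually, verifying that the translational $\mathbb{C}^{2n}$ ambiguity present for a general integral ASK manifold is exactly the freedom killed by the canonical choice $\xi^{1,0}=\frac12(\xi-iJ\xi)$. Everything else is a matter of assembling Proposition \ref{centralchargeprop} with the standard properties of conical special coordinates.
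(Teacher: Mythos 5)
Your proposal is correct and follows essentially the same route as the paper: both arguments use the globally defined canonical $\xi^{1,0}=\frac{1}{2}(\xi-iJ\xi)$ to kill the translational $\mathbb{C}^{2n}$ ambiguity, so that all integral coordinate systems built from it are related by $Sp(2n,\mathbb{Z})$ on overlaps, and then invoke Proposition \ref{centralchargeprop}, with conicality immediate from the choice of $\xi^{1,0}$; your intrinsic formula $Z=2\,\omega(\xi^{1,0},-)|_{\Lambda}$ is precisely the relation the paper records after (\ref{Z:eq}). The only difference is that you make explicit the uniqueness discussion (via the parallel vector field $\xi^{1,0}_{Z'}-\xi^{1,0}$ and the relabeling action of $\Gamma(Sp(\Lambda))$), which the paper's proof leaves essentially implicit.
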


\begin{proof}
In the CASK case we have a global and canonical complex vector field $\xi^{1,0}:=\frac{1}{2}(\xi-iJ\xi)$ satisfying $\nabla \xi^{1,0}=\pi^{1,0}$. Any two conjugate systems of integral holomorphic coordinates defined by this $\xi^{1,0}$ are related by a transformation in $Sp(2n,\mathbb{Z})$. This allows us to produce the required cover of Proposition \ref{centralchargeprop} and hence a canonical central charge homomorphism. The fact that  $(Z_{\gamma^i})$ and $(Z_{\widetilde{\gamma}_i})$ are conical is clear from the fact that we are using $\frac{1}{2}(\xi-iJ\xi)$ to define $Z$.
\end{proof} 

We are now ready to give the description of $g^{\text{sf}}$ that will be useful for the following sections.

\begin{proposition} Let $(M,g,\omega,\nabla,\Lambda)$ be an integral ASK manifold admitting a central charge $Z:M\to \Lambda^*\otimes \mathbb{C}$, and let $(N=T^*M/\Lambda^*,g^{\text{sf}},I_1,I_2,I_3)$ be the associated semi-flat HK manifold. Furthermore, consider the torus bundle $\mathcal{N}\to M$ defined by

\begin{equation}\label{charbundle}
    \mathcal{N}_u:=\{\theta : \Lambda_u \to \mathbb{R}/2\pi \mathbb{Z} \;\; | \;\; \theta_{\gamma+ \gamma'}= \theta_{\gamma}+\theta_{\gamma'}\}\,.
\end{equation}
Then $\mathcal{N}\to M$ is canonically isomorphic to $N \to M$. Furthermore, by using the induced\footnote{We define the induced pairing by the property that $\gamma \mapsto \langle \gamma,-\rangle$ maps the pairing in $\Lambda$ to the pairing in $\Lambda^*$. With this definition the dual of a Darboux basis is a Darboux basis. We will also keep using the notation $\langle - , - \rangle$ for the $\mathbb{C}$-bilinear extension of the pairing to $\Lambda^*\otimes \mathbb{C}$} pairing $\langle - , - \rangle$ on $\Lambda^*$ and the evaluation map $\theta: \mathcal{N} \to \Lambda^*\otimes \mathbb{R}/2\pi \mathbb{Z}$, we can write the K\"{a}hler forms $\omega_{\alpha}^{\text{sf}}:=g^{\text{sf}}(I_{\alpha}-,-)$ as follows (see \cite{NewCHK}):
\begin{equation}
    \begin{split}
    \omega_1^{\text{sf}}+i\omega_2^{\text{sf}}&=-\frac{1}{2\pi}\langle dZ\wedge d\theta \rangle \\
    \omega_3^{\text{sf}}&=\frac{1}{4}\langle dZ\wedge d\overline{Z}\rangle - \frac{1}{8\pi^2}\langle d\theta \wedge d\theta \rangle\,. 
    \end{split}
\end{equation}

\end{proposition}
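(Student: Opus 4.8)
The statement has two parts: the canonical bundle isomorphism $\mathcal{N}\cong N$, and the two formulas for the semi-flat K\"ahler forms. I would prove the isomorphism first, then fix a local Darboux frame and reduce both formulas to the special-coordinate description of the rigid c-map recalled above.

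\textbf{The isomorphism.} Since $T_uM=\Lambda_u\otimes\mathbb{R}$, we have $T_u^*M=\Lambda_u^*\otimes\mathbb{R}$ and hence $N_u=(\Lambda_u^*\otimes\mathbb{R})/\Lambda_u^*$. On the other hand, a homomorphism $\theta\colon\Lambda_u\to\mathbb{R}/2\pi\mathbb{Z}$ is the same as an element of $\mathrm{Hom}(\Lambda_u,\mathbb{R})/\mathrm{Hom}(\Lambda_u,2\pi\mathbb{Z})=(\Lambda_u^*\otimes\mathbb{R})/2\pi\Lambda_u^*$, so $\mathcal{N}_u=(\Lambda_u^*\otimes\mathbb{R})/2\pi\Lambda_u^*$. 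I would define $N\to\mathcal{N}$ fiberwise by $[\eta]\mapsto\big(\gamma\mapsto 2\pi\,\eta(\gamma)\bmod 2\pi\mathbb{Z}\big)$; this is well defined because $\eta\in\Lambda_u^*$ forces $\eta(\gamma)\in\mathbb{Z}$, and multiplication by $2\pi$ identifies the two quotients, giving a canonical (frame-independent) bundle isomorphism over $M$. The factor $2\pi$ is precisely what reconciles the two lattice normalizations, and must be tracked through the rest of the argument.

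\textbf{Coordinates and expansion.} Fix a local Darboux frame $(\widetilde{\gamma}_i,\gamma^i)$ of $\Lambda$ and the associated integral holomorphic special coordinates $z^i:=Z_{\gamma^i}$, $w_i:=Z_{\widetilde{\gamma}_i}$, with $dw_i=\tau_{ij}\,dz^j$ and $\tau$ symmetric. The dual frame $((\widetilde{\gamma}_i)^*,(\gamma^i)^*)$ is Darboux for the induced pairing, so $\langle(\widetilde{\gamma}_i)^*,(\gamma^j)^*\rangle=\delta_i^j$, and by (\ref{Z:eq}) we have $Z=z^i(\gamma^i)^*+w_i(\widetilde{\gamma}_i)^*$, where $dZ=\nabla Z$ is computed treating the $\nabla$-flat frame as constant. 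Writing $\theta=\phi_i(\widetilde{\gamma}_i)^*+\chi^i(\gamma^i)^*$ with $\phi_i:=\theta_{\widetilde{\gamma}_i}$, $\chi^i:=\theta_{\gamma^i}$, the isomorphism above gives $\phi_i=2\pi\widetilde{p}_i$, $\chi^i=2\pi q^i$ in terms of the canonical cotangent fiber coordinates $\eta=\widetilde{p}_i\,dx^i+q^i\,dy_i$. A direct expansion of the pairing then yields
\begin{align*}
\langle dZ\wedge d\theta\rangle &= -dz^i\wedge d\phi_i + dw_i\wedge d\chi^i,\\
\langle dZ\wedge d\overline{Z}\rangle &= -dz^i\wedge d\overline{w}_i + dw_i\wedge d\overline{z}^i,\\
\langle d\theta\wedge d\theta\rangle &= 2\,d\phi_i\wedge d\chi^i.
\end{align*}
For $\omega_3^{\text{sf}}$, using $dw_i=\tau_{ij}dz^j$ and the symmetry of $\tau$ gives $\tfrac14\langle dZ\wedge d\overline{Z}\rangle=\tfrac{i}{2}\text{Im}(\tau_{ij})\,dz^i\wedge d\overline{z}^j=\pi^*\omega$; since $I_3=J\oplus J^*$ preserves the horizontal/vertical splitting and restricts on the horizontal part to $g(J\cdot,\cdot)=\omega$, this is exactly the horizontal component of $\omega_3^{\text{sf}}$, while $-\tfrac{1}{8\pi^2}\langle d\theta\wedge d\theta\rangle=-d\widetilde{p}_i\wedge dq^i$ matches the vertical form $g^{-1}(J^*\cdot,\cdot)$. (A frame-free derivation of the horizontal part is also available: from $\nabla Z=2\,\omega(\pi^{1,0}\cdot,-)|_\Lambda$ one sees that $dZ(v)$ corresponds to $2\pi^{1,0}v$ under $TM\otimes\mathbb{C}\to\Lambda^*\otimes\mathbb{C}$, $\gamma\mapsto\omega(\gamma,-)$, and a short computation with $\pi^{1,0}=\tfrac12(\text{Id}-iJ)$ gives $\langle dZ\wedge d\overline{Z}\rangle=4\omega$.) For $\omega_1^{\text{sf}}+i\omega_2^{\text{sf}}$, substituting the fiber coordinates gives $-\tfrac{1}{2\pi}\langle dZ\wedge d\theta\rangle=dz^i\wedge d\widetilde{p}_i-dw_i\wedge dq^i$, which I would identify with the canonical holomorphic symplectic form (equal to $\omega_1^{\text{sf}}+i\omega_2^{\text{sf}}$, as recalled in the discussion of the rigid c-map) by introducing the complex fiber coordinate $W_i:=\widetilde{p}_i-\tau_{ij}q^j$, for which the tautological one-form is $\lambda=\text{Re}(W_i\,dz^i)$.

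\textbf{Main obstacle.} The delicate step is this last identity: one must verify that $-\tfrac{1}{2\pi}\langle dZ\wedge d\theta\rangle$ is of type $(2,0)$ for $I_3$ and equals $dW_i\wedge dz^i$ up to the global sign fixed by conventions. This is exactly where special K\"ahler geometry enters, for when one writes $dz^i\wedge(d\widetilde{p}_i-\tau_{ij}dq^j)=dz^i\wedge dW_i+q^j\,dz^i\wedge d\tau_{ij}$, the unwanted $(1,1)$-type remainder $q^j\,dz^i\wedge d\tau_{ij}$ vanishes only because $\partial_k\tau_{ij}=\partial_i\partial_j\partial_k\mathfrak{F}$ is totally symmetric, i.e.\ because a holomorphic prepotential exists. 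Apart from this, the remaining work is purely bookkeeping: keeping the factors of $2\pi$ from the two lattice normalizations consistent, and fixing the global signs in $I_1,I_2$ and in the induced pairing so that the stated formulas hold on the nose.
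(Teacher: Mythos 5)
Your proposal is correct, and its first half coincides with the paper's own proof: the paper defines exactly the same isomorphism $[\alpha]\mapsto p\circ(2\pi\alpha|_{\Lambda})$ and performs the same Darboux-frame expansion of the pairings (your three displayed identities appear, up to notation, verbatim in the paper's computation). Where you genuinely diverge is the endgame. The paper rearranges the expanded expressions into $\frac{1}{2\pi}dZ_{\gamma^i}\wedge(d\theta_{\widetilde{\gamma}_i}-\tau_{ij}d\theta_{\gamma^j})$ and $\frac{i}{2}\text{Im}(\tau_{ij})dZ_{\gamma^i}\wedge d\overline{Z}_{\gamma^j}+\frac{i}{8\pi^2}(\text{Im}\,\tau)^{ij}(d\theta_{\widetilde{\gamma}_i}-\tau_{ik}d\theta_{\gamma^k})\wedge(d\theta_{\widetilde{\gamma}_j}-\overline{\tau_{jl}}d\theta_{\gamma^l})$ and then simply asserts that, under the identification and up to overall normalization and sign conventions, these ``reproduce the usual formulas of the semi-flat HK metric'', deferring to Proposition 3 of \cite{QKPSK}; the actual comparison with the c-map data $g\oplus g^{-1}$, $J\oplus J^*$ is outsourced to that reference. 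You instead verify the identification internally: the horizontal part of $\omega_3^{\text{sf}}$ via $\frac14\langle dZ\wedge d\overline{Z}\rangle=\pi^*\omega_M$ (your frame-free argument from $\omega(\xi^{1,0},-)|_{\Lambda}=\frac12 Z$ is correct, and matches the remark the paper places immediately after the proposition), the vertical part via the $2\pi$-rescaled fiber coordinates, and $\omega_1^{\text{sf}}+i\omega_2^{\text{sf}}$ via the tautological form and the complex fiber coordinate $W_i=\widetilde{p}_i-\tau_{ij}q^j$, invoking the fact---stated in the paper's Section 2.2---that the canonical holomorphic symplectic form equals $\omega_1^{\text{sf}}+i\omega_2^{\text{sf}}$. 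This buys self-containedness and makes visible exactly where special K\"ahler geometry enters (the total symmetry of $\partial_k\tau_{ij}$, i.e.\ the existence of the prepotential), at the cost of having to track the passage between real and holomorphic fiber coordinates yourself; the paper's route is shorter, but its final step is a citation rather than a verification.

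Two small corrections. First, the remainder $q^j\,dz^i\wedge d\tau_{ij}=q^j\partial_k\tau_{ij}\,dz^i\wedge dz^k$ is of type $(2,0)$ with respect to $I_3$ (both factors are pullbacks of $(1,0)$-forms on $M$), not of type $(1,1)$ as you label it; its vanishing is not a statement about type but follows, as you then correctly say, from the symmetry of $\partial_k\tau_{ij}$ in $i$ and $k$. Second, your claim that the vertical piece $-d\widetilde{p}_i\wedge dq^i$ ``matches $g^{-1}(J^*\cdot,\cdot)$'' is itself a sign-convention check you have not carried out; this is acceptable here only because the paper's own proof is equally loose at precisely this point (``up to an overall normalization of the metric \dots keeping in mind the different sign conventions''), so you are not below the standard of the printed argument.
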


\begin{proof}
Consider the natural projection $p:\mathbb{R}\to \mathbb{R}/2\pi \mathbb{Z}$, then the bundle isomorphism $\mathcal{N}\cong N$ is given by 
    \begin{equation}
        [\alpha] \in N=T^*M/\Lambda^* \to  p\circ (2\pi\alpha|_{\Lambda}) \in \mathcal{N}\,.
    \end{equation}
    
To check the claim on the K\"{a}hler forms, it is enough to show that in local coordinates we recover the usual expressions. Fix a local Darboux frame $(\widetilde{\gamma}_i,\gamma^i)$ of $\Lambda$ over $U$, and consider the local coordinates $(Z_{\gamma^i},\theta_{\widetilde{\gamma}_i},\theta_{\gamma^i})$ on $\mathcal{N}\cong N$, where $\theta_{\gamma}:\mathcal{N}|_U\to \mathbb{R}/2\pi \mathbb{Z}$ is the contraction of $\theta$ with $\gamma$. We can then write

\begin{equation}
    -\frac{1}{2\pi}\langle dZ\wedge d\theta \rangle=-\frac{1}{2\pi}(dZ_{\widetilde{\gamma}_i}\wedge d\theta_{\gamma^i} - dZ_{\gamma^i}\wedge d\theta_{\widetilde{\gamma}_i})=\frac{1}{2\pi}dZ_{\gamma^i}\wedge (d\theta_{\widetilde{\gamma}_i}-\tau_{ij}d\theta_{\gamma^j})
\end{equation}
where we used $dZ_{\widetilde{\gamma}_i}=\tau_{ij}dZ_{\gamma^j}$. Similarly, using that $\tau_{ij}$ must be symmetric (since $(Z_{\widetilde{\gamma}_i})$ and $(Z_{\gamma^i})$ are a conjugate system of holomorphic special coordinates), we find

\begin{equation}
    \begin{split}
        \frac{1}{4}\langle dZ\wedge d\overline{Z}\rangle - \frac{1}{8\pi^2}\langle d\theta \wedge d\theta \rangle&=\frac{1}{4}(dZ_{\widetilde{\gamma}_i}\wedge d\overline{Z}_{\gamma^i}-dZ_{\gamma^i}\wedge d\overline{Z}_{\widetilde{\gamma}_i})-\frac{1}{8\pi^2}(d\theta_{\widetilde{\gamma}_i}\wedge d\theta_{\gamma^i}- d\theta_{\gamma^i}\wedge d\theta_{\widetilde{\gamma}_i})\\
        &=\frac{i}{2}\text{Im}(\tau_{ij})dZ_{\gamma^i}\wedge dZ_{\gamma^j}-\frac{1}{4\pi^2}d\theta_{\widetilde{\gamma}_i}\wedge d\theta_{\gamma^i}\\
        &=\frac{i}{2}\text{Im}(\tau_{ij})dZ_{\gamma^i}\wedge d\overline{Z}_{\gamma^j} + \frac{i}{8\pi^2}\big(\text{Im}(\tau)\big)^{ij}(d\theta_{\widetilde{\gamma}_i}-\tau_{ik}d\theta_{\gamma^k})\wedge(d\theta_{\widetilde{\gamma}_j}-\overline{\tau_{jl}}d\theta_{\gamma^l})\,.
    \end{split}
\end{equation}

Under our identification, and up to an overall normalization of the metric, these expressions reproduce the usual formulas of the semi-flat HK metric (see for example \cite[Proposition 3]{QKPSK}, while keeping in mind the different sign conventions for special coordinates).
\end{proof}
Note that $\frac{1}{4}\langle dZ\wedge d\overline{Z}\rangle = \frac{i}{2}\partial \bar{\partial}r^2$ is precisely the K\"ahler form of the affine special K\"ahler manifold, where $r^2 =  -\frac{i}{2}\langle Z,\bar{Z}\rangle$. In the case of a conical affine special K\"ahler manifold, we can write this K\"ahler potential as 
$r^2=g(\xi,\xi)$, cf.\ (\ref{Z:eq}).

\end{subsection}
\section{Instanton corrected HK structures}
\label{instcorrsec}

Consider an integral ASK manifold $(M,g_M,\omega_M,\nabla,\Lambda)$. We  seek to include the data of a variation of BPS structures over $M$, to get an ``instanton corrected" pseudo-hyperk\"{a}hler metric on $N=T^*M/\Lambda^*$, following \cite{GMN,NewCHK}.\\

On the other hand, it is known that the semi-flat HK metric on $N$ coming from the CASK manifold $M$ has an infinitesimal rotating circle action \cite{Conification,QKPSK}. We wish to show that this infinitesimal rotating action survives the instanton corrections coming from a mutually local variation of BPS structures, with the aim of applying the HK-QK correspondence in the next sections. \\

We start by quickly reviewing in Section \ref{varsec} the inclusion of instanton corrections according to the work in the physics literature of \cite{GMN}. Section \ref{varsec} is only meant as a motivation for the formulas appearing in Section \ref{mutsec}. In Section \ref{mutsec} we restrict to the simpler case of mutually local corrections, and prove under which conditions we obtain an ``instanton corrected" HK structure (see Theorem \ref{theorem1}). The proof is direct and explicit, avoiding the use of twistor space methods. 

\begin{subsection}{Variations of BPS structures and instanton corrected HK metrics}
\label{varsec}
To explain what we will mean by an instanton correction of the semi-flat HK structure, we will require the notion of variations of BPS structures \cite{VarBPS}.

\begin{definition} \label{defVarBPS}
A variation of (integral) BPS structures over a complex manifold $M$ is a tuple $(M,\Lambda,Z, \Omega)$, where:

\begin{itemize}
    \item $\Lambda \to M$ is a local system of lattices $\Lambda_p\cong \mathbb{Z}^r$ with a covariantly constant,  skew, integer-valued pairing $\langle - ,- \rangle$.
    \item $Z$ is a holomorphic section of $\Lambda^*\otimes \mathbb{C}\to M$.
    \item $\Omega:\Lambda\to \mathbb{Z}$ is a function (of sets) satisfying $\Omega(\gamma)=\Omega(-\gamma)$ and the Kontsevich-Soibelman wall-crossing formula \cite{KS,VarBPS}. 
    \end{itemize}
    
    \begin{remark}\label{WKremark}
    We will not need to fully state the KS wall-crossing formula, but will only mention some consequences that this condition has on $\Omega$. Consider the real codimension $1$ subset $\mathcal{W}\subset M$ defined by
        
        \begin{equation}
            \mathcal{W}:=\{p\in M \;\; | \;\; \exists \gamma, \gamma' \in \text{Supp}(\Omega)\cap \Lambda_p, \;\; \langle \gamma, \gamma' \rangle\neq 0, \;\; Z_{\gamma}/Z_{\gamma'}\in \mathbb{R}_{>0} \}\,,
        \end{equation}
        where $\text{Supp}(\Omega):=\{\gamma \in \Lambda \;\; | \;\; \Omega(\gamma)\neq 0\}$.
        The fact that $\Omega$ satisfies the wall-crossing formula implies that for a local section $\gamma$ of $\Lambda$, $\Omega(\gamma)$ is locally constant on $M\backslash \mathcal{W}$. Furthermore, the discontinuity at the ``wall" $\mathcal{W}$ is completely determined by the wall-crossing formula, and $\Omega$ is monodromy invariant (i.e. if $\gamma_p$ has monodromy $A\cdot \gamma_p$ for $A\in Sp(\Lambda_p, \langle - , -\rangle)$ around a loop, then $\Omega(\gamma_p)=\Omega(A\cdot \gamma_p)$).
    \end{remark}
    
    The tuple $(M,\Lambda,Z, \Omega)$ should furthermore satisfy the following two properties:
    \begin{itemize}
    \item Support property: given a compact set $K\subset M$ and a choice of covariantly constant norm $|\cdot |$ on $\Lambda|_K \otimes_{\mathbb{Z}}\mathbb{R} $, there should be a constant $C>0$ such that for any $\gamma \in \Lambda|_K\cap \text{Supp}(\Omega)$
    \begin{equation} \label{supportproperty}
        |Z_{\gamma}|>C|\gamma|\,.
    \end{equation}
    
    \item Convergence property: for any $R>0$ the series
    \begin{equation}\label{convergenceproperty}
        \sum_{\gamma \in \Lambda_p}|\Omega(\gamma)|e^{-R|Z_{\gamma}|}
    \end{equation}
    converges normally on compact subsets of $M$.
    
\end{itemize}
\end{definition}
\begin{remark} \leavevmode
\begin{itemize}
    \item The support property implies that if $\gamma\in \text{Supp}(\Omega)$, then $Z_{\gamma}\neq 0$. Furthermore, for any $R>0$ and $p\in M$, there can only be finitely many $Z_{\gamma}$ with $\gamma \in \Lambda_p\cap\text{Supp}(\Omega)$ and $|Z_{\gamma}|<R$. In particular, we must have $|Z_{\gamma}|\to \infty$ as $|\gamma|\to \infty$ for $\gamma \in  \Lambda_p\cap \text{Supp}(\Omega)$.
    \item  The convergence property is stronger than the one on \cite{VarBPS}. However, it will simplify technical details of convergence and term by term differentiation of sums that will appear below in the case that $\text{Supp}(\Omega)$ is infinite. We remark that the BPS indices appearing in the string theory setting of \cite{HMreview1,HMreview2,HMmetric} are not expected to satisfy even the weaker convergence condition on \cite{VarBPS}.
\end{itemize}
\end{remark}

We will only consider variation of BPS structures over an ASK manifold that are ``adapted" to the ASK structure in the following sense:

\begin{definition}\label{adapted}
Let $(M,g_M,\omega_M,\nabla,\Lambda)$ be an integral ASK manifold admitting a central charge $Z: M \to \Lambda^*\otimes \mathbb{C}$ (recall Proposition \ref{centralchargeprop} and \ref{centralchargeCASK}). Having fixed a central charge $Z$, an adapted variation of BPS structures over $(M,g_M,\omega_M,\nabla,\Lambda)$ is a variation of BPS structures $(M,\Lambda',Z',\Omega)$ such that $(\Lambda',Z')=(\Lambda,Z)$. In the case of a CASK manifold we always take the canonical central charge.
\end{definition}

 Now consider an adapted variation of BPS structures $(M,\Lambda,Z,\Omega)$ over $(M,g_M,\omega_M,\nabla,\Lambda)$. Furthermore, we consider the bundle $\pi:\mathcal{M}\to M$ of ``twisted" unitary characters given by
 
 \begin{equation}
     \mathcal{M}_u:=\{\theta : \Lambda_u \to \mathbb{R}/2\pi \mathbb{Z} \;\; | \;\; \theta_{\gamma+ \gamma'}= \theta_{\gamma}+\theta_{\gamma'}+\pi \langle \gamma,\gamma' \rangle\}\,,
 \end{equation}
 
 \begin{remark} The reason for considering twisted characters has to do with implementing the wall-crossing formalism from $\cite{KS}$. We remark that $\mathcal{M}$ and $\mathcal{N}$ (see (\ref{charbundle})) can be locally identified (non-canonically), but they might differ topologically (see \cite{GMN} for a discussion on this). 
 
 \end{remark}

 In \cite{GMN,NewCHK}, the proposed intanton corrected HK structure on $\mathcal{M}$ is then described as follows: first one must find locally defined functions $\mathcal{X}_{\gamma}:U\subset \mathcal{M}\times \mathbb{C}^{\times}\to \mathbb{C}^{\times}$, labeled by local sections $\gamma$ of $\Lambda|_{\pi(U)}$, and satisfying the ``GMN equations":
    
    \begin{equation}\label{GMNeq}                    \mathcal{X}_{\gamma}(\theta,\zeta)=
    \mathcal{X}^{\text{sf}}_{\gamma}(\theta,\zeta)\exp \Big[-\frac{1}{4\pi i}\sum_{\gamma'\in \Lambda_{\pi(\theta)}}\Omega(\gamma')\langle \gamma,\gamma' \rangle\int_{\mathbb{R}_{-}Z_{\gamma'}}\frac{d\zeta'}{\zeta'}\frac{\zeta'+\zeta}{\zeta'-\zeta}\log(1-\mathcal{X}_{\gamma'}(\theta,\zeta'))\Big]\,,
    \end{equation}
    where 
    \begin{equation}
        \mathcal{X}^{\text{sf}}_{\gamma}(\theta,\zeta)=\exp[\pi \zeta^{-1}Z_{\gamma}+i\theta_{\gamma}+\pi  \zeta \overline{Z}_{\gamma}]\,.
    \end{equation}
    For a fixed $\theta \in \mathcal{M}$, the functions $\mathcal{X}_{\gamma}(\theta,\zeta)$ have discontinuities in $\zeta$ along the rays $\mathbb{R}_{-}Z_{\gamma}$ with $\gamma \in \text{Supp}(\Omega)$ (the so-called BPS rays). Furthermore, a consequence of satisfying (\ref{GMNeq}) is that $\mathcal{X}_{\gamma+\gamma'}=(-1)^{\langle \gamma,\gamma' \rangle}\mathcal{X}_{\gamma}\mathcal{X}_{\gamma'}$, which is related to the twist in the unitary characters, and important for the wall-crossing formalism. \\
    
    Then one defines the following $\mathbb{C}^{\times}$-family of complex $2$-forms on $\mathcal{M}$:
    \begin{equation}
    \varpi(\zeta)=\frac{1}{8\pi^2 }\langle d\log(\mathcal{X}(\zeta))\wedge d\log(\mathcal{X}(\zeta))\rangle
    \end{equation}
    where $d$ differentiates only in the $\mathcal{M}$ directions. The discontinuities of $\mathcal{X}_{\gamma}(\zeta)$ in $\zeta$ turn out to not affect $\varpi(\zeta)$. Moreover, the KS wall-crossing formula is used to argue that $\varpi(\zeta)$ is actually well-defined over $\mathcal{W}\subset M$, where the BPS indices $\Omega(\gamma)$ jump. \\
    
 Finally, they argue that there is a hyperk\"{a}hler twistor space structure on $\mathcal{M}\times \mathbb{C}P^1$, whose $\mathcal{O}(2)$-twisted family of holomorphic symplectic forms is given by $\zeta \varpi(\zeta) \otimes \partial_{\zeta}$. In particular, if we expand $\zeta \varpi(\zeta)$ in $\zeta$, we obtain an expression of the form
    
    \begin{equation}
        \zeta\varpi(\zeta)=-\frac{i}{2}\varpi + \zeta\omega_3 -\frac{i}{2}\zeta^2 \overline{\varpi}\,,
    \end{equation}
    where $\varpi$ and $\omega_3$ give a holomorphic symplectic form and K\"{a}hler form with respect to one of the complex structures; and $\text{Re}(\varpi)$, $\text{Im}(\varpi)$ and $\omega_3$ give a triple of K\"{a}hler forms for the hyperk\"{a}hler structure.

\end{subsection}

\begin{subsection}{Mutually local variations of BPS structures and the instanton corrected HK structure}
\label{mutsec}

One of the main issues in describing the instanton corrected HK structure lies in solving the equations (\ref{GMNeq}).  Below, we will restrict to a case where the integral equations (\ref{GMNeq}) reduce to integral formulas, and write down the candidate $\varpi$ and $\omega_3$. We will then show in Theorem \ref{theorem1} under what conditions they define a HK structure on $N=T^*M/\Lambda^*$.

\begin{definition}\label{defMutLocBPS}
A variation of BPS structures $(M,\Lambda,Z,\Omega)$ is mutually local if $\gamma, \gamma' \in \text{Supp}(\Omega)$ implies that $\langle \gamma,\gamma' \rangle=0$.
\end{definition}
\begin{remark}\leavevmode
\begin{itemize}
    \item The mutually local condition implies $\mathcal{W}=\emptyset$, and hence no wall-crossing occurs for the BPS indices $\Omega(\gamma)$. In particular, given a local section $\gamma$ of $\Lambda$, $\Omega(\gamma)$ is a locally constant function on $M$.
    \item On the other hand, given $(M,g_M,\omega_M,\nabla,\Lambda)$ with an adapted mutually local variation of BPS structures,  the mutually local condition implies that we can find a local Darboux frame $(\widetilde{\gamma}_i,\gamma^i)$ of $\Lambda$ such that $\text{Supp}(\Omega)\subset \text{span}_{\mathbb{Z}}\{\gamma^i\}$ (see Lemma \ref{lemmatech} below).  It is then easy to see that the GMN equations (\ref{GMNeq}) for $\{\mathcal{X}_{\widetilde{\gamma}_i}(\zeta),\mathcal{X}_{\gamma^i}(\zeta)\}$ reduce from integral equations to integral formulas. In Lemma \ref{mutuallylocalforms} below we write down the corresponding candidate $\varpi$ and $\omega_3$ obtained from the explicit formulas for $\{\mathcal{X}_{\widetilde{\gamma}_i}(\zeta),\mathcal{X}_{\gamma^i}(\zeta)\}$ (see also \cite[section 4.3 and 5.6]{GMN}). 
\end{itemize}
\end{remark}

\begin{remark} 
For the rest of the paper we will only consider adapted variations of mutually local BPS structures over an integral ASK manifold $(M,g_M,\omega_M,\nabla, \Lambda)$ admitting a central charge (recall Propositions \ref{centralchargeprop}, \ref{centralchargeCASK}, and Definition \ref{adapted}). For simplicity, we will denote them just by $\Omega$ and refer to them as mutually local variations of BPS structures, omitting the word ``adapted". In the CASK case we always use the canonical central charge.

\end{remark}
Consider $(M,g_M,\omega_M,\nabla,\Lambda)$ with a mutually local variation of BPS structures $\Omega$. Following, \cite[section 4.3 and 5.6]{GMN}, we define the following forms on  $N=T^*M/\Lambda^*$ (we will omit from the notation pullbacks by the canonical projection $\pi:N\to M$):

\begin{lemma}
Consider a local section $\gamma$ of $\Lambda|_U$ for $U\subset M$ and with $\gamma \in \text{Supp}(\Omega)$. Let \begin{equation}\label{inst1}
        \begin{split}
            V_{\gamma}^{\text{inst}}&:=\frac{1}{2\pi}\sum_{n>0}e^{in\theta_{\gamma}}K_0(2\pi n|Z_{\gamma}|)\\
            A_{\gamma}^{\text{inst}}&:=-\frac{1}{4\pi}\sum_{n>0}e^{in\theta_{\gamma}}|Z_{\gamma}|K_1(2\pi n|Z_{\gamma}|)\Big( \frac{dZ_{\gamma}}{Z_{\gamma}}-\frac{d\overline{Z}_{\gamma}}{\overline{Z}_{\gamma}}\Big)
        \end{split}
    \end{equation}
where $K_0$ and $K_1$ are modified Bessel functions of the second kind. Then $V_{\gamma}^{\text{inst}}\in C^{\infty}(\pi^{-1}(U))$ and $A_{\gamma}^{\text{inst}}\in \Omega^1(\pi^{-1}(U))$.
\end{lemma}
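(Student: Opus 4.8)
The statement is entirely one of convergence and smoothness, so the plan is to show that each series, and each of its formal term-by-term derivatives of arbitrary order, converges absolutely and uniformly on compact subsets of $\pi^{-1}(U)$; the classical theorem on differentiating series then yields $V_\gamma^{\text{inst}}\in C^\infty(\pi^{-1}(U))$ and $A_\gamma^{\text{inst}}\in\Omega^1(\pi^{-1}(U))$ directly. Two facts drive the estimates. First, the support property in Definition \ref{defVarBPS} guarantees that $\gamma\in\text{Supp}(\Omega)$ forces $Z_\gamma\neq 0$ on $U$, so on any compact $K\subset\pi^{-1}(U)$ there is a constant $c=c(K,\gamma)>0$ with $|Z_\gamma|\geq c$ on $K$ (and also an upper bound $|Z_\gamma|\leq M$). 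Second, the modified Bessel functions satisfy $K_\nu(x)=\sqrt{\pi/(2x)}\,e^{-x}(1+O(1/x))$ as $x\to\infty$ and are monotonically decreasing, so $K_0(2\pi n|Z_\gamma|)\leq K_0(2\pi n c)\leq Ce^{-2\pi n c}$ and likewise $|Z_\gamma|\,K_1(2\pi n|Z_\gamma|)\leq MK_1(2\pi n c)\leq C'e^{-2\pi n c}$ for all $n\geq 1$.

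First I would fix local coordinates on $\pi^{-1}(U)$, with the base contributing the holomorphic coordinates through which $Z_\gamma$, and hence $|Z_\gamma|=\sqrt{Z_\gamma\overline{Z}_\gamma}$, are smooth functions, and the torus fibre contributing angle coordinates in terms of which $\theta_\gamma$ is $\mathbb{Z}$-linear. Since $Z_\gamma\neq0$ on $U$, the form $\frac{dZ_\gamma}{Z_\gamma}-\frac{d\overline{Z}_\gamma}{\overline{Z}_\gamma}$ is a smooth $1$-form with coefficients bounded on $K$, and $|Z_\gamma|$ is smooth with bounded derivatives on $K$. The bounds above then dominate the summands of $V_\gamma^{\text{inst}}$ and the coefficients of $A_\gamma^{\text{inst}}$ by the summable sequence $Ce^{-2\pi n c}$, giving absolute uniform convergence of both series on $K$.

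The smoothness is where the Bessel asymptotics really enter. I would differentiate the general summand to arbitrary finite order: by the Leibniz and chain rules each resulting term has the shape $(\text{polynomial in }n)\cdot(\text{smooth factor bounded on }K)\cdot e^{in\theta_\gamma}\cdot K_m(2\pi n|Z_\gamma|)$, because every fibre derivative of $e^{in\theta_\gamma}$ produces a factor $in$ of unit modulus, every base derivative hitting $K_m(2\pi n|Z_\gamma|)$ produces a factor $2\pi n\,\partial|Z_\gamma|$ together with $K_m'=-\tfrac12(K_{m-1}+K_{m+1})$, and derivatives of $|Z_\gamma|$ or $\theta_\gamma$ contribute only factors bounded on $K$. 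Each such term is therefore bounded on $K$ by $P(n)e^{-2\pi n c}$ for a fixed polynomial $P$, and $\sum_{n\geq1}P(n)e^{-2\pi n c}<\infty$. Thus every differentiated series converges uniformly on $K$; as smoothness is a local property and every point of $\pi^{-1}(U)$ has a compact neighbourhood, term-by-term differentiation is justified and the two claims follow.

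The only delicate point, and the main if mild obstacle, is the chain-rule bookkeeping through $|Z_\gamma|=\sqrt{Z_\gamma\overline{Z}_\gamma}$ and through the prefactor $|Z_\gamma|$ in $A_\gamma^{\text{inst}}$: the square root is smooth precisely because $|Z_\gamma|$ stays bounded away from zero on $K$, so the singularities $K_1(x)\sim x^{-1}$ and $K_0(x)\sim-\log x$ as $x\to0$ are never approached and never obstruct the estimate. With that observed, the whole argument reduces to absorbing the polynomial-in-$n$ prefactors into the exponential Bessel decay.
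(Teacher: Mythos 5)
Your proposal is correct and follows essentially the same route as the paper: the support property forces $Z_\gamma\neq 0$ on all of $U$, and the asymptotics $K_\nu(x)\sim\sqrt{\pi/(2x)}\,e^{-x}$ give exponential decay in $n$, hence normal convergence on compact subsets of $\pi^{-1}(U)$. The only difference is that you spell out the term-by-term differentiation estimates (polynomial-in-$n$ factors absorbed by the exponential decay) that justify smoothness of the limit, a step the paper's proof leaves implicit under the phrase ``as a consequence of compact convergence''.
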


\begin{proof}
The convergence of the two series on the right-hand side of  (\ref{inst1}) is compact normal on the set $\{p\in \pi^{-1}(U) \;\; | \;\; Z_\gamma(\pi(p)) \neq 0\}$, thanks to the asymptotics $K_{\nu}(x) \sim \sqrt{\frac{\pi}{2x}}e^{-x}(1+O(\frac{1}{x}))$ for $x\rightarrow \infty$, $\nu = 0,1$.  On the other hand, by the support property (\ref{supportproperty}) and our  assumption that $\gamma \in \text{Supp}(\Omega)$, we must have $\{p\in \pi^{-1}(U) \;\; | \;\; Z_\gamma(\pi(p)) \neq 0\}=\pi^{-1}(U)$. Hence, as a consequence of compact convergence $V_{\gamma}^{\text{inst}}$ defines a smooth function on $\pi^{-1}(U)$ and $A_{\gamma}^{\text{inst}}$ defines a smooth $1$-form on $\pi^{-1}(U)$. 
\end{proof}

The candidate instanton corrected $\varpi$ and $\omega_3$ are then the following:

\begin{lemma}\label{mutuallylocalforms} Let

\begin{equation} \label{holsym}
    \varpi:=-\frac{1}{2\pi}\langle dZ\wedge d\theta \rangle + \sum_{\gamma}\left( \Omega(\gamma)dZ_{\gamma}\wedge A_{\gamma}^{\text{inst}}  +\frac{i\Omega(\gamma)}{2\pi }V_{\gamma}^{\text{inst}}d\theta_{\gamma}\wedge dZ_{\gamma}\right)
\end{equation}
\begin{equation}\label{invKF}
    \omega_{3}:=\frac{1}{4}\langle dZ\wedge d\overline{Z}\rangle-\frac{1}{8\pi^2} \langle d\theta\wedge d\theta \rangle +\sum_{\gamma}\left( \frac{i\Omega(\gamma)}{2}V^{\text{inst}}_{\gamma}dZ_{\gamma}\wedge d\overline{Z}_{\gamma}+\frac{\Omega(\gamma)}{2\pi } d\theta_{\gamma}\wedge A_{\gamma}^{\text{inst}}\right)\,.
\end{equation}
Then $\varpi \in \Omega^2(N,\mathbb{C})$ and $\omega_3\in \Omega^2(N)$.
\end{lemma}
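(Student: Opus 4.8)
The plan is to separate the two ``semi-flat'' terms from the instanton sums. The semi-flat pieces $-\frac{1}{2\pi}\langle dZ\wedge d\theta\rangle$ and $\frac14\langle dZ\wedge d\bar Z\rangle-\frac{1}{8\pi^2}\langle d\theta\wedge d\theta\rangle$ are already shown to be globally well-defined smooth $2$-forms on $N$ by the Proposition expressing $\omega^{\text{sf}}_\alpha$ through the invariant pairing $\langle-,-\rangle$, so it remains only to show that the instanton sums $\sum_\gamma\Omega(\gamma)(\cdots)$ in (\ref{holsym}) and (\ref{invKF}) converge to smooth forms and descend to global forms on $N$. I would first dispose of well-definedness: each summand is assembled from the frame-independent, $\gamma$-intrinsic data $Z_\gamma$, $\theta_\gamma$, $dZ_\gamma$, $d\theta_\gamma$, $V^{\text{inst}}_\gamma$, $A^{\text{inst}}_\gamma$ (the latter two smooth by the preceding lemma), and is weighted by $\Omega(\gamma)$, which by the Remark following Definition \ref{defMutLocBPS} (mutual locality forces $\mathcal{W}=\emptyset$) is a locally constant, monodromy-invariant function on $\Lambda$. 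Hence the map $\gamma\mapsto\Omega(\gamma)\big(dZ_\gamma\wedge A^{\text{inst}}_\gamma+\tfrac{i}{2\pi}V^{\text{inst}}_\gamma\,d\theta_\gamma\wedge dZ_\gamma\big)$, and its analogue for $\omega_3$, is invariant under the monodromy action $\gamma\mapsto A\gamma$, so the sum over the whole fiber lattice $\Lambda_p$ is independent of the local Darboux frame used to enumerate it, and the local expressions glue to global forms.

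Next I would prove that each instanton series converges in the $C^\infty$ (compact-normal) topology; this simultaneously gives smoothness and licenses the term-by-term differentiation implicit in treating the sums as $2$-forms. Fix a compact $K\subset M$. Using the Bessel asymptotics $K_\nu(x)\sim\sqrt{\pi/2x}\,e^{-x}$ already invoked, together with the corresponding bounds for $K_\nu'$, one bounds each summand and each of its partial derivatives on $\pi^{-1}(K)$ by a constant times $|\Omega(\gamma)|\,P(|\gamma|)\,e^{-2\pi|Z_\gamma|}$ with $P$ a polynomial: differentiating in the torus directions brings down powers of the index $n$ from $e^{in\theta_\gamma}$, which are absorbed by $\sum_{n>0}n^k e^{-2\pi n|Z_\gamma|}\le C_k\,e^{-2\pi|Z_\gamma|}$ (valid since the support property (\ref{supportproperty}) yields a uniform lower bound on $|Z_\gamma|$ over $\mathrm{Supp}(\Omega)$ and over $K$), while differentiating in the base directions produces at worst polynomial factors in $|Z_\gamma|$ and in the components of $dZ_\gamma$. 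The support property then converts $e^{-2\pi|Z_\gamma|}$ into exponential decay in $|\gamma|$ dominating $P(|\gamma|)$, and the convergence property (\ref{convergenceproperty}), applied with $R$ slightly below $2\pi$, gives normal convergence of $\sum_\gamma|\Omega(\gamma)|e^{-R|Z_\gamma|}$ on $K$. Thus the series of forms and all their derivatives converge normally, so $\varpi$ and $\omega_3$ are smooth.

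Finally I would record the type statements. The form $\varpi$ is complex-valued by construction, so $\varpi\in\Omega^2(N,\mathbb{C})$; the content for $\omega_3$ is reality. For this I pair each $\gamma\in\mathrm{Supp}(\Omega)$ with $-\gamma$, using $\Omega(-\gamma)=\Omega(\gamma)$, $Z_{-\gamma}=-Z_\gamma$ (so $|Z_{-\gamma}|=|Z_\gamma|$), $\theta_{-\gamma}=-\theta_\gamma$, which give $V^{\text{inst}}_{-\gamma}=\overline{V^{\text{inst}}_\gamma}$ and $A^{\text{inst}}_{-\gamma}=-\overline{A^{\text{inst}}_\gamma}$, together with $dZ_{-\gamma}=-dZ_\gamma$, $d\theta_{-\gamma}=-d\theta_\gamma$. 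A short computation then turns the $V$-contribution into $i\Omega(\gamma)\,\mathrm{Re}(V^{\text{inst}}_\gamma)\,dZ_\gamma\wedge d\bar Z_\gamma$ (real, since $dZ_\gamma\wedge d\bar Z_\gamma$ is purely imaginary) and the $A$-contribution into $\tfrac{\Omega(\gamma)}{\pi}\,d\theta_\gamma\wedge\mathrm{Re}(A^{\text{inst}}_\gamma)$ (real), so $\omega_3\in\Omega^2(N)$.

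I expect the $C^\infty$-convergence estimate to be the main obstacle: one must absorb the factors of $n$ generated by $\theta$-differentiation into the exponential decay of $K_\nu(2\pi n|Z_\gamma|)$ and, at the same time, absorb the polynomial-in-$|\gamma|$ factors generated by base differentiation into the exponential decay supplied by the support property, so that the convergence property applies with a single $R<2\pi$ uniformly over $K$. Once this uniform estimate is in place, the gluing and reality arguments are routine bookkeeping.
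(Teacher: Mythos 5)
Your proof is correct and follows essentially the same route as the paper: monodromy invariance of $\Omega$ for global well-definedness of the sums, Bessel asymptotics combined with the support property (\ref{supportproperty}) and the convergence property (\ref{convergenceproperty}) (with $R$ just below $2\pi$) for normal convergence of the instanton series, and the pairing $\gamma \leftrightarrow -\gamma$ with $\overline{V^{\text{inst}}_{\gamma}}=V^{\text{inst}}_{-\gamma}$, $\overline{A^{\text{inst}}_{\gamma}}=-A^{\text{inst}}_{-\gamma}$, $\Omega(\gamma)=\Omega(-\gamma)$ for the reality of $\omega_3$. If anything, you are more careful than the paper, which only establishes normal convergence of the coefficient sums of the form $\sum_{\gamma}|\Omega(\gamma)||Z_{\gamma}|^2\sum_{n>0}K_{\nu}(2\pi n|Z_{\gamma}|)$ and then asserts smoothness, whereas you explicitly control the differentiated series as well.
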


\begin{remark}
Notice that the sums over $\gamma$ in the definitions of $\varpi$ and $\omega_3$ are monodromy invariant due to the monodromy invariance of $\Omega$ (see Remark \ref{WKremark}), so they make global sense.
\end{remark}

\begin{proof}
If $\text{Supp}(\Omega)$ is finite, then clearly $\varpi$ and $\omega_3$ define smooth forms on $N$ by the previous lemma. Otherwise, consider a compact set $K\subset N$ such that we have a frame $(\widetilde{\gamma}_i,\gamma^i)$ of $\Lambda|_{\pi(K)}$, and a covariantly constant choice of norm for $\Lambda|_{\pi(K)}$. Writing (\ref{holsym}) and (\ref{invKF}) with respect to $dZ_{\widetilde{\gamma}_i}$ $dZ_{\gamma^i}$, $d\theta_{\widetilde{\gamma}_i}$ and $d\theta_{\gamma^i}$, we can use the support property (\ref{supportproperty}) to reduce the question of convergence of (\ref{holsym}) and (\ref{invKF}) to the convergence of sums of the form

\begin{equation}\label{sums}
    \sum_{\gamma}|\Omega(\gamma)||Z_{\gamma}|^2\sum_{n>0}K_{\nu}(2\pi n |Z_{\gamma}|)\,,
\end{equation}
where $\nu=0,1$. By the use of the asymptotics of the Bessel functions and the support property (\ref{supportproperty}), it is then easy to check that for any $0<\epsilon<1$ we can find $C_1>0$ and $C_2>0$ such for $|\gamma|>C_1$ we have

\begin{equation}
    |Z_{\gamma}|^2\sum_{n>0}K_{\nu}(2\pi n|Z_{\gamma}|)<C_2e^{-2\pi(1-\epsilon)|Z_{\gamma}|}
\end{equation}
(recall that by the support property $|Z_{\gamma}|\to \infty$ uniformly over $K$ as $|\gamma| \to \infty$ with $\gamma \in \text{Supp}(\Omega)$).
By the convergence property (\ref{convergenceproperty}), we then see that the infinite sums (\ref{sums}) converge normally over compact subsets of $N$, and hence the (\ref{holsym}) and (\ref{invKF}) define smooth forms on $N$.\\

The reality of $\omega_3$ follows from the identities $\overline{V_{\gamma}^{\text{inst}}}=V_{-\gamma}^{\text{inst}}$ and $\overline{A_{\gamma}^{\text{inst}}}=-A_{-\gamma}^{\text{inst}}$, together with the fact that $\Omega(\gamma)=\Omega(-\gamma)$. 
\end{proof}

The key notion that will guarantee
that the triple $(\omega_1:=\text{Re}(\varpi),\omega_2:=\text{Im}(\varpi),\omega_3)$ defines a HK structure on $N$ is the following: 

\begin{definition} \label{defT}Let $\pi:N\to M$ be the canonical projection, where $N=T^*M/\Lambda^*$. We will say that $(M,g_M,\omega_M,\nabla,\Lambda)$ and a mutually local $\Omega$ are compatible if the tensor field 

\begin{equation}\label{non-deg}
    T:=\pi^*g_{M}+ \sum_{\gamma}\Omega(\gamma)V_{\gamma}^{\text{inst}}\pi^*|dZ_{\gamma}|^2
\end{equation}
on $N$ is horizontally non-degenerate, i.e. $T$ is non-degenerate on the normal bundle of the fibers.  
\end{definition}

\begin{theorem}\label{theorem1} Consider an ASK manifold $(M,g_M,\omega_M,\nabla,\Lambda)$ together with a mutually local variation of BPS structures $\Omega$. Then the triple $(\omega_1=\text{Re}(\varpi),\omega_2=\text{Im}(\varpi),\omega_3)$ of real $2$-forms on $N$ define a pseudo-HK structure on $N$ if and only if $(M,g_M,\omega_M,\nabla,\Lambda)$ and $\Omega$ are compatible. 
\end{theorem}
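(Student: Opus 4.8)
The forms (\ref{holsym}) and (\ref{invKF}) are taken as an ansatz, so the plan is to verify directly the three ingredients of a pseudo-hyperk\"ahler structure: closedness of the three fundamental forms, the existence at each point of a compatible pseudo-hyperhermitian structure $(g_N,I_1,I_2,I_3)$ inducing them, and the integrability of that structure. Closedness does not involve the hypothesis on $T$, so I would establish it first; the whole force of the ``if and only if'' then lies in a pointwise non-degeneracy statement controlled by $T$.

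\textbf{Step 1 (closedness).} I would show $d\varpi=0$ and $d\omega_3=0$. The semi-flat parts $\langle dZ\wedge d\theta\rangle$, $\langle dZ\wedge d\overline Z\rangle$, $\langle d\theta\wedge d\theta\rangle$ are closed because $Z$ is holomorphic, $\langle-,-\rangle$ is covariantly constant, and $d(d\theta_\gamma)=0$. For the instanton terms I would use the explicit $(Z_\gamma,\theta_\gamma)$-dependence of $V_\gamma^{\text{inst}}$ and $A_\gamma^{\text{inst}}$: the Bessel identities $K_0'=-K_1$ and $(xK_1)'=-xK_0$ yield a monopole-type relation expressing $dA_\gamma^{\text{inst}}$ through $dV_\gamma^{\text{inst}}$ and $d\theta_\gamma$, which makes each summand in (\ref{holsym}) and (\ref{invKF}) closed. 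Term-by-term differentiation is justified by the normal convergence proved in Lemma \ref{mutuallylocalforms}, and the mutual-locality normalization $\text{Supp}(\Omega)\subset\text{span}_{\mathbb Z}\{\gamma^i\}$ from Lemma \ref{lemmatech} lets me treat one charge at a time.

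\textbf{Step 2 (the pointwise equivalence, the crux).} Working in such a Darboux frame and in the horizontal/vertical splitting $TN\cong\pi^*TM\oplus\pi^*T^*M$ induced by $\nabla$, I would read off the almost complex structure $I_3$ for which $\varpi$ is of type $(2,0)$ --- a deformation of the semi-flat one, since the $dZ_\gamma\wedge A_\gamma^{\text{inst}}$ terms contribute $(1,1)$-pieces $dZ_\gamma\wedge d\overline Z_\gamma$ --- and set $g_N:=\omega_3(\,\cdot\,,I_3\,\cdot\,)$. A direct computation should show that in this frame $g_N$ is block-structured, with horizontal block exactly the tensor $T$ of (\ref{non-deg}) and vertical block the corresponding inverse tensor $T^{-1}$ (paired through the lattice): the correction $\tfrac{i}{2}\Omega(\gamma)V_\gamma^{\text{inst}}dZ_\gamma\wedge d\overline Z_\gamma$ to $\tfrac14\langle dZ\wedge d\overline Z\rangle$ reproduces precisely the shift $g_M\rightsquigarrow T$, while the $d\theta_\gamma\wedge A_\gamma^{\text{inst}}$ terms supply the matching vertical connection correction. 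Hence $g_N$, equivalently each $\omega_\alpha$, is non-degenerate at a point if and only if $T$ is non-degenerate on the horizontal subspace there; and where this holds, $g_N$ is symmetric and the $I_\alpha$ defined from the three forms satisfy the quaternion relations with $\omega_\alpha=g_N(I_\alpha\,\cdot\,,\,\cdot\,)$. This linear-algebra reduction is the heart of the statement and the step I expect to be most delicate, because it requires pinning down the corrected $I_3$ and checking that the horizontal/vertical cross terms do not disturb the block structure.

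\textbf{Step 3 (integrability and conclusion).} Granting horizontal non-degeneracy of $T$, Step 2 produces a genuine almost pseudo-hyperhermitian structure, and I would invoke the classical fact that such a structure all of whose fundamental $2$-forms are closed (Step 1) is automatically pseudo-hyperk\"ahler --- the $I_\alpha$ are then integrable and parallel for the Levi-Civita connection (this argument is local and algebraic, so it is insensitive to the signature). This gives the ``if'' direction. For ``only if'', the pointwise equivalence of Step 2 runs backwards: a pseudo-HK structure forces every $\omega_\alpha$, hence $g_N$, to be non-degenerate, and since the horizontal block of $g_N$ is $T$, horizontal non-degeneracy of $T$ is necessary. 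The main obstacle is therefore concentrated entirely in Step 2, namely reducing non-degeneracy of the full $4n$-dimensional structure to that of the $2n$-dimensional horizontal tensor $T$.
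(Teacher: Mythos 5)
Your overall strategy is the paper's own: closedness via the Bessel identities $K_0'=-K_1$ and $(xK_1(x))'=-xK_0(x)$, a pointwise frame computation in which everything is controlled by the invertibility of $M_{ij}=N_{ij}+N_{ij}^{\text{inst}}$ of (\ref{instN}) (equivalently, horizontal non-degeneracy of $T$), and Hitchin's lemma to promote the closed almost pseudo-hyperhermitian structure to a pseudo-HK one. But the mechanism you propose for your Step 2 --- the step you correctly single out as the crux --- fails as stated. In the $\nabla$-induced splitting $TN\cong\pi^*TM\oplus\pi^*T^*M$ the metric is \emph{not} block diagonal with blocks $T$ and $T^{-1}$: the corrected coframe elements $Y_i=W_i+W_i^{\text{inst}}$ of (\ref{instW}) contain the \emph{horizontal} $1$-forms $A_\gamma^{\text{inst}}$ (linear combinations of $dZ_\gamma,d\overline{Z}_\gamma$), so the restriction of $g_N$ to the $\nabla$-horizontal distribution is $T$ plus $A^{\text{inst}}$-quadratic terms, and the horizontal/vertical cross terms you hope "do not disturb the block structure" are genuinely nonzero. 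This is exactly what the remark following Corollary \ref{HKmetriccoord} records: once $\Omega\neq 0$, the $\nabla$-horizontal fields $\partial_{Z_{\gamma^i}}$ are no longer perpendicular to the fibers. The repair is the paper's Lemma \ref{usefulexpressions} and Corollary \ref{HKmetriccoord}: block diagonality holds in the \emph{twisted} coframe $\{dZ_{\gamma^i},d\overline{Z}_{\gamma^i},Y_i,\overline{Y}_i\}$, where $\varpi=\frac{1}{2\pi}dZ_{\gamma^i}\wedge Y_i$, $\omega_3$ takes the form (\ref{KF3}), and $g_N=dZ_{\gamma^i}M_{ij}d\overline{Z}_{\gamma^j}+\frac{1}{4\pi^2}Y_iM^{ij}\overline{Y}_j$; the $g_N$-orthogonal complement of the fibers is a deformation of the $\nabla$-horizontal distribution, not that distribution itself.

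A second, related gap is in your "only if" argument. Declaring $\{dZ_{\gamma^i},Y_i\}$ to be of type $(1,0)$ defines $I_3$ only where $\{dZ,d\overline{Z},Y,\overline{Y}\}$ is a coframe, and that is itself equivalent to invertibility of $M_{ij}$: the change of basis from $\{d\theta_{\widetilde{\gamma}_i},d\theta_{\gamma^i}\}$ to the vertical parts of $\{Y_i,\overline{Y}_i\}$ has determinant a nonzero multiple of $\det M$. So your Step 2 silently assumes the compatibility hypothesis from the outset, and the inference "$g_N$ is non-degenerate and its horizontal block is $T$, hence $T$ is non-degenerate" cannot even be formulated at a point where $T$ degenerates, since there $I_3$ and $g_N$ do not exist. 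Necessity must instead be run directly on the forms (\ref{holsym}), (\ref{invKF}) (equivalently (\ref{omega12})), which never involve $M^{ij}$. Concretely, if $M_{ij}v^j=0$ with $v$ real and nonzero, the real vertical vector $u$ determined by $d\theta_{\gamma^j}(u)=v^j$, $d\theta_{\widetilde{\gamma}_i}(u)=\text{Re}(\tau_{ij})v^j$ satisfies $Y_i(u)=\overline{Y}_i(u)=0$ (here one uses $(\tau_{ij}+iN^{\text{inst}}_{ij})v^j=\text{Re}(\tau_{ij})v^j$, which is where $Mv=0$ enters), hence $\iota_u\omega_1=\iota_u\omega_2=0$; and for $\omega_3$ the representing matrix in the real coordinate frame becomes singular, as in the paper's row/column reduction, with a null vector of the form $v^i\partial_{x^i}$ plus a vertical correction. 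With these two corrections your outline coincides with the paper's proof; as written, the key step would not go through.
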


Before doing the proof of Theorem \ref{theorem1}, we will need two lemmas. Lemma \ref{lemmatech} is a technical result needed for Lemma \ref{usefulexpressions}; while Lemma \ref{usefulexpressions} collects some useful expressions needed for the proof of Theorem \ref{theorem1}. 

\begin{lemma} \label{lemmatech} Let $\Lambda$ be a rank $2n$ lattice with a skew pairing $\omega:\Lambda \times \Lambda \to \mathbb{Z}$ admitting a Darboux basis, and let $S\subset \Lambda$ be a subset such that $\gamma, \gamma'\in S$ implies  $\omega(\gamma,\gamma')=0$. Then there is a Darboux basis $(\widetilde{\gamma}_i,\gamma^i)$ of $\Lambda$ such that $S\subset \text{span}_{\mathbb{Z}}\{\gamma^i\}$.

\end{lemma}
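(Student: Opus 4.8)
The plan is to recast the statement as integral symplectic linear algebra. Writing $L:=\mathrm{span}_{\mathbb{Z}}S$, the hypothesis on $S$ says exactly that $L$ is \emph{isotropic}: $\omega|_{L\times L}=0$. It therefore suffices to find a Lagrangian direct summand $\Lambda_2\subset\Lambda$ with $L\subseteq\Lambda_2$, and then to show that any Lagrangian direct summand completes to a Darboux basis $(\widetilde{\gamma}_i,\gamma^i)$ with $\Lambda_2=\mathrm{span}_{\mathbb{Z}}\{\gamma^i\}$. Throughout I will use that ``$\Lambda$ admits a Darboux basis'' is equivalent to $\omega$ being \emph{unimodular}, i.e.\ the map $\flat:\Lambda\to\Lambda^*$, $v\mapsto\omega(v,-)$, is an isomorphism. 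First I would replace $L$ by its saturation $\overline{L}:=(L\otimes\mathbb{Q})\cap\Lambda$; this is a primitive (hence direct-summand) sublattice, and it is still isotropic, since for $v,w\in\overline{L}$ there are nonzero integers $a,b$ with $av,bw\in L$, whence $ab\,\omega(v,w)=\omega(av,bw)=0$.

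The core step is to enlarge a primitive isotropic sublattice to a Lagrangian summand, which I would do by induction on the corank $n-\mathrm{rk}(L)$. For a primitive isotropic $L$ of rank $k$, the composite $\Lambda\xrightarrow{\ \flat\ }\Lambda^*\to L^*$ (restriction of functionals) is surjective because $L$ is a direct summand, so its kernel $L^{\perp}:=\{v\in\Lambda:\omega(v,L)=0\}$ is a primitive sublattice of rank $2n-k$, and it contains $L$ because $L$ is isotropic. When $k<n$ we have $2n-k>k$, so $L^{\perp}\not\subseteq L\otimes\mathbb{Q}$ and I may choose $v\in L^{\perp}$ with $v\notin L\otimes\mathbb{Q}$. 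Then $L+\mathbb{Z}v$ is isotropic (every element of $L^{\perp}$ pairs trivially with $L$, and $\omega(v,v)=0$ by skew-symmetry) of rank $k+1$; replacing it by its saturation keeps it isotropic and primitive, and the induction proceeds. At rank $n$ one obtains a primitive isotropic $\Lambda_2\supseteq\overline{L}\supseteq S$; since $\Lambda_2\subseteq\Lambda_2^{\perp}$ with both saturated of rank $n$ and spanning the same rational subspace, in fact $\Lambda_2=\Lambda_2^{\perp}$, i.e.\ $\Lambda_2$ is Lagrangian.

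It remains to complete $\Lambda_2$ to a Darboux basis. By unimodularity the map $\phi:\Lambda\to\Lambda_2^*$, $v\mapsto\omega(v,-)|_{\Lambda_2}$, is surjective with kernel $\Lambda_2^{\perp}=\Lambda_2$, so it induces an isomorphism $\Lambda/\Lambda_2\xrightarrow{\sim}\Lambda_2^*$. Choosing a $\mathbb{Z}$-module splitting $\Lambda=\Lambda_1\oplus\Lambda_2$ and a basis $\gamma^i$ of $\Lambda_2$, I obtain the $\phi$-dual basis $\widetilde{\gamma}_i^{\,0}$ of $\Lambda_1$ with $\omega(\widetilde{\gamma}_i^{\,0},\gamma^j)=\delta_i^j$; set $a_{ij}:=\omega(\widetilde{\gamma}_i^{\,0},\widetilde{\gamma}_j^{\,0})$, a skew integer matrix. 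The corrected vectors $\widetilde{\gamma}_i:=\widetilde{\gamma}_i^{\,0}+\sum_j c_{ij}\gamma^j$ still satisfy $\omega(\widetilde{\gamma}_i,\gamma^j)=\delta_i^j$ (as $\Lambda_2$ is isotropic), and a short computation gives $\omega(\widetilde{\gamma}_i,\widetilde{\gamma}_j)=a_{ij}+c_{ji}-c_{ij}$. Taking $c$ strictly upper triangular with $c_{ij}=a_{ij}$ for $i<j$ solves $c_{ij}-c_{ji}=a_{ij}$, so the $\widetilde{\gamma}_i$ are isotropic and $(\widetilde{\gamma}_i,\gamma^i)$ is a Darboux basis with $S\subseteq\Lambda_2=\mathrm{span}_{\mathbb{Z}}\{\gamma^i\}$.

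The main obstacle is purely the passage from a field to $\mathbb{Z}$: one cannot simply invoke Gram--Schmidt but must control primitivity throughout (saturating at each inductive step, and using that primitive sublattices are direct summands so that the pairing with their duals stays perfect and unimodular), and one must correct the complementary half by the skew matrix $a_{ij}$ to force it to be genuinely Lagrangian rather than merely a dual complement.
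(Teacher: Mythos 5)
Your proof is correct, and it follows a genuinely different route from the paper's. The paper starts from a \emph{maximal} isotropic subgroup $L\supseteq S$: maximality gives primitivity for free, and the rank count $\mathrm{rk}(L)=n$ is obtained from surjectivity of the map $\gamma\mapsto\omega(\gamma,-)|_L$ onto $\mathrm{Hom}(L,\mathbb{Z})$; it then completes $L$ to a Darboux basis one vector at a time, deducing primitivity of each corrected basis vector $\gamma^{r+1}$ from maximality of $L$, producing its dual partner via Bezout's identity, and re-correcting that partner to be orthogonal to the pairs already constructed. You instead build the Lagrangian by hand---saturate $\mathrm{span}_{\mathbb{Z}}S$, then induct on corank using $L^{\perp}$---and you complete it in one shot: unimodularity gives $\Lambda/\Lambda_2\cong\Lambda_2^*$, a splitting gives a dual complement, and the strictly triangular choice $c_{ij}=a_{ij}$ for $i<j$ kills the skew Gram matrix $a_{ij}$ of that complement. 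What each buys: the paper's argument is more elementary and self-contained (Bezout plus explicit corrections, with no saturations or dual lattices), while yours isolates the two structural facts doing the real work---that restriction maps from primitive sublattices of a unimodular lattice are surjective, and that any dual complement of a Lagrangian summand can be corrected to a Lagrangian complement by a skew matrix---and thereby avoids the vector-by-vector bookkeeping. Both proofs ultimately rest on the same surjectivity fact; the paper uses it only for the rank count, whereas you use it both for the computation of $L^{\perp}$ and for the completion step.
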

\begin{proof}
Let $L$ be a maximal isotropic  subgroup of $\Lambda$ such that $S\subset L$.  Since $\Lambda$ is a finitely generated free abelian group, the same is true for $L$, and $\text{rk}(L)\leq \text{rk}(\Lambda)=2n$. We 
claim that 
$\text{rk}(L)=n$. To see this, we consider the map $\psi_\omega :\Lambda \to \text{Hom}(L,\mathbb{Z})$ given by $\gamma \to \omega(\gamma,-)|_L$. Because $\omega$ admits a Darboux basis, 
$\psi_\omega :\Lambda \to \text{Hom}(L,\mathbb{Z})$ is surjective. 
Here we use that $L$ is primitive (by maximality) and thus the 
natural map $\text{Hom}(\Lambda,\mathbb{Z})\rightarrow \text{Hom}(L,\mathbb{Z})$ is surjective. This in turn follows from the fact that every primitive system of vectors in a lattice can be extended to a basis. Hence, if we denote $L^{\omega}:=\text{Ker}(\psi_\omega)$, we obtain $\text{rk}(L^{\omega})=\text{rk}(\Lambda)-\text{rk}(L)$. On the other hand, $L\subset L^{\omega}$ implies that $\text{rk}(L)\leq \text{rk}(L^{\omega})$, and since $L$ is a maximal isotropic sublattice containing $S$, we must have $\text{rk}(L)=\text{rk}(L^\omega)$. It 
follows that 
$\text{rk}(L)=n$. \\

Now let $\{\alpha^i\}_{i=1,..,n}$ be a basis for $L$. Using again that $L$ is a maximal isotropic sublattice, we have that 
$\alpha^1$ 
is primitive in $\Lambda$.
Now let $(\widetilde{\beta}_i,\beta^i)$ be a Darboux basis of $(\Lambda,\omega)$. Writing $\alpha^1=b^i\widetilde{\beta}_i+b_i\beta^i$ and using the fact that $\alpha^1$ is primitive, we must have $\text{gcd}\{b^i,b_i\}_{i=1}^n=1$. By Bezout's identity, there exist integers $\{a_i,a^i\}_{i=1}^n$ such that $a_ib^i + b_ia^i=1$. Defining

\begin{equation}
    \widetilde{\alpha}_1:= a^i\widetilde{\beta}_i -a_i\beta^i\,,
\end{equation}
we then see that $\omega(\widetilde{\alpha}_1,\alpha^1)=1$. We set $\gamma^1:=\alpha^1$ and $\widetilde{\gamma}_1:=\widetilde{\alpha}_1$.\\

Assume by induction that we have found $\{\widetilde{\gamma}_i,\gamma^i\}_{i=1,...,r}$ satisfying $\omega(\widetilde{\gamma}_i,\gamma^j)=\delta_i^j$, $\omega(\gamma^i,\gamma^j)=0$, $\omega(\widetilde{\gamma}_i,\widetilde{\gamma}_j)=0$ and such that $\text{span}(\gamma^1,...\gamma^r)=\text{span}(\alpha^1,...,\alpha^r)\subset L$. If $r=n=\text{rank}(L)$ we are done. Otherwise, pick $\alpha^{r+1}$ and define
\begin{equation}\label{gammar+1}
    \gamma^{r+1}:=\alpha^{r+1}+\sum_{i=1}^r\omega(\alpha^{r+1},\widetilde{\gamma}_i)\gamma^i \,.
\end{equation}
Then $\omega(\gamma^{r+1},\gamma^i)=0$, $\omega(\gamma^{r+1},\widetilde{\gamma}_i)=0$ and $\text{span}(\gamma^1,...,\gamma^{r+1})=\text{span} (\alpha^1,...,\alpha^{r+1})$. Furthermore, by (\ref{gammar+1}) and the fact that $L$ is maximal isotropic, we again have that $\gamma^{r+1}$ is primitive. We then use the Darboux frame $(\widetilde{\beta}_i,\beta^i)$  and the Bezout identity the same way as before to find $\widetilde{\alpha}_{r+1}$ such that $\omega(\widetilde{\alpha}_{r+1},\gamma^{r+1})=1$.\\

Define
\begin{equation}
    \widetilde{\gamma}_{r+1}:=\widetilde{\alpha}_{r+1}-\sum_{i=1}^r\omega(\widetilde{\alpha}_{r+1},\gamma^i)\widetilde{\gamma}_i +  \sum_{i=1}^r\omega(\widetilde{\alpha}_{r+1},\widetilde{\gamma}_{i})\gamma^i\,,
\end{equation}
we then have $\omega(\widetilde{\gamma}_{r+1},\gamma^{r+1})=1$ and for $i=1,...,r$ we have $\omega(\widetilde{\gamma}_{r+1},\gamma^i)=\omega(\widetilde{\gamma}_{r+1},\widetilde{\gamma}_i)=0$. Hence, we have found $\{\widetilde{\gamma}_i,\gamma^i\}_{i=1,...,r+1}$ satisfying $\omega(\widetilde{\gamma}_i,\gamma^j)=\delta_i^j$, $\omega(\gamma^i,\gamma^j)=0$, $\omega(\widetilde{\gamma}_i,\widetilde{\gamma}_j)=0$ and such that $\text{span}(\gamma^1,...,\gamma^{r+1})=\text{span} (\alpha^1,...,\alpha^{r+1})$.\\

At the n-th step we then have a Darboux frame $\{\widetilde{\gamma}_i,\gamma^i\}$ of $\Lambda$ such that $S\subset L=\text{span}(\gamma^1,...,\gamma^n)$, which is what we wanted.
\end{proof}

\begin{lemma}\label{usefulexpressions}
Let $(M,g_M,\omega_M,\nabla,\Lambda)$ and $\Omega$ be as before. Given a local Darboux frame $(\widetilde{\gamma}_i,\gamma^i)$ for $\Lambda$ such that $\text{Supp}(\Omega)\subset \text{span}_{\mathbb{Z}}\{\gamma^i\}$ (see Lemma \ref{lemmatech}), we write $\gamma=n_i(\gamma)\gamma^i$ for $\gamma \in \text{Supp}(\Omega)$ and define the complex $1$-forms on $N$ 
\begin{equation}\label{instW}
    W_i:=d\theta_{\widetilde{\gamma}_i}-\tau_{ij}d\theta_{\gamma^j}, \;\;\;\;\;\; W_i^{\text{inst}}:=\sum_{\gamma}\Omega(\gamma)n_i(\gamma)(2\pi A_{\gamma}^{\text{inst}}-iV_{\gamma}^{\text{inst}}d\theta_{\gamma}),\;\;\;\;\;\; Y_i:=W_i+W_i^{\text{inst}}\,,
\end{equation}
where $dZ_{\widetilde{\gamma}_i}=\tau_{ij}dZ_{\gamma^j}$. Furthermore, we define the matrices
\begin{equation}\label{instN}
        N_{ij}:=\text{Im}{\tau}_{ij}, \;\;\;\;\;\; N_{ij}^{\text{inst}}:=\sum_{\gamma}\Omega(\gamma)V^{\text{inst}}_{\gamma}n_i(\gamma)n_j(\gamma), \;\;\;\;\;\;
        M_{ij}:=N_{ij}+N_{ij}^{\text{inst}}\,.
\end{equation}
We then have 
\begin{equation}\label{omega12}
    \varpi=\frac{1}{2\pi}dZ_{\gamma^i}\wedge Y_i, \;\;\;\;\omega_1=\frac{1}{4\pi}(dZ_{\gamma^i}\wedge Y_i + d\overline{Z}_{\gamma^i}\wedge \overline{Y}_i),\;\;\;\;
    \omega_2=\frac{1}{4\pi i}(dZ_{\gamma^i}\wedge Y_i - d\overline{Z}_{\gamma^i}\wedge \overline{Y}_i)\,,
\end{equation}
and assuming $\Omega$ is compatible with the ASK manifold,
\begin{equation}\label{KF3}
    \omega_3=\frac{i}{2}M_{ij}dZ_{\gamma^i}\wedge d\overline{Z}_{\gamma^j}+\frac{i}{8\pi^2}M^{ij}Y_i\wedge \overline{Y}_j\,.
\end{equation}
\end{lemma}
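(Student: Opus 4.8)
The plan is to fix the local Darboux frame $(\widetilde\gamma_i,\gamma^i)$ supplied by Lemma \ref{lemmatech}, so that every $\gamma\in\text{Supp}(\Omega)$ reads $\gamma=n_i(\gamma)\gamma^i$ and hence $dZ_\gamma=n_i(\gamma)\,dZ_{\gamma^i}$, $d\theta_\gamma=n_i(\gamma)\,d\theta_{\gamma^i}$, $Z_\gamma=n_i(\gamma)Z_{\gamma^i}$, with the $n_i(\gamma)$ locally constant. I then rewrite both (\ref{holsym}) and (\ref{invKF}) in this frame and match them against (\ref{omega12}) and (\ref{KF3}).

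For (\ref{omega12}) I first treat $\varpi$. The semi-flat term reduces exactly as in the computation of $\omega_\alpha^{\text{sf}}$ in Section \ref{GMNsemiflat}: using $dZ_{\widetilde\gamma_i}=\tau_{ij}dZ_{\gamma^j}$ and the symmetry of $\tau$ one gets $-\frac{1}{2\pi}\langle dZ\wedge d\theta\rangle=\frac{1}{2\pi}dZ_{\gamma^i}\wedge W_i$. For the instanton part I pull $dZ_{\gamma^i}$ out in front: since $n_i(\gamma)dZ_{\gamma^i}=dZ_\gamma$ and $dZ_\gamma\wedge d\theta_\gamma=-d\theta_\gamma\wedge dZ_\gamma$, the sum in (\ref{holsym}) becomes $\frac{1}{2\pi}dZ_{\gamma^i}\wedge W_i^{\text{inst}}$ with $W_i^{\text{inst}}$ as in (\ref{instW}). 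Adding the two gives $\varpi=\frac{1}{2\pi}dZ_{\gamma^i}\wedge Y_i$, and the formulas for $\omega_1=\text{Re}\,\varpi$, $\omega_2=\text{Im}\,\varpi$ follow by adding and subtracting $\overline\varpi=\frac{1}{2\pi}d\overline Z_{\gamma^i}\wedge\overline Y_i$.

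The substance is (\ref{KF3}). Here the organizing observation is that, after collecting the $d\theta$-terms, one has $Y_i=\widetilde W_i+2\pi A_i^{\text{inst}}$, where $\widetilde W_i:=d\theta_{\widetilde\gamma_i}-\widehat\tau_{ik}d\theta_{\gamma^k}$, $\widehat\tau_{ik}:=\tau_{ik}+iN^{\text{inst}}_{ik}$ and $A_i^{\text{inst}}:=\sum_\gamma\Omega(\gamma)n_i(\gamma)A_\gamma^{\text{inst}}$. Two facts drive everything: (i) $\widehat\tau$ is symmetric with $\text{Im}\,\widehat\tau=M$, by (\ref{instN}); and (ii) $A_i^{\text{inst}}$ is a \emph{real} $1$-form, which follows from $\overline{A_\gamma^{\text{inst}}}=-A_{-\gamma}^{\text{inst}}$ together with $\Omega(-\gamma)=\Omega(\gamma)$ and $n_i(-\gamma)=-n_i(\gamma)$ after reindexing $\gamma\mapsto-\gamma$. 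I then expand $\frac{i}{8\pi^2}M^{ij}Y_i\wedge\overline Y_j$ into the $\widetilde W\wedge\overline{\widetilde W}$, the cross, and the $A^{\text{inst}}\wedge A^{\text{inst}}$ pieces. The last piece vanishes: by (ii) it equals $\frac{i}{2}M^{ij}A_i^{\text{inst}}\wedge A_j^{\text{inst}}$, an antisymmetric quantity contracted against the symmetric $M^{ij}$. The $\widetilde W\wedge\overline{\widetilde W}$ piece reproduces $-\frac{1}{8\pi^2}\langle d\theta\wedge d\theta\rangle$ by the identical completing-the-square computation already used for the semi-flat metric, with $\tau,N$ replaced by $\widehat\tau,M$, which is legitimate precisely because of (i). For the cross piece I use $\overline{A_j^{\text{inst}}}=A_j^{\text{inst}}$, the symmetry of $M^{ij}$, the identity $\widetilde W_i-\overline{\widetilde W}_i=-2iM_{ik}d\theta_{\gamma^k}$ and $M^{ij}M_{ik}=\delta^j_k$ to collapse it to $\frac{1}{2\pi}d\theta_{\gamma^j}\wedge A_j^{\text{inst}}=\sum_\gamma\frac{\Omega(\gamma)}{2\pi}d\theta_\gamma\wedge A_\gamma^{\text{inst}}$. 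Finally $\frac{i}{2}M_{ij}dZ_{\gamma^i}\wedge d\overline Z_{\gamma^j}$ matches the $dZ\wedge d\overline Z$ content of (\ref{invKF}), namely $\frac{i}{2}N_{ij}$ coming from $\frac14\langle dZ\wedge d\overline Z\rangle$ plus $\frac{i}{2}N^{\text{inst}}_{ij}$ coming from the first instanton sum; hence the right-hand side of (\ref{KF3}) equals (\ref{invKF}).

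The main obstacle, and the place where the compatibility hypothesis enters, is the bookkeeping of the quadratic term $\frac{i}{8\pi^2}M^{ij}Y_i\wedge\overline Y_j$. One must notice that $A_i^{\text{inst}}$ is real, for otherwise the $A^{\text{inst}}\wedge A^{\text{inst}}$ contribution would introduce spurious $dZ\wedge d\overline Z$ (and $dZ\wedge dZ$) terms absent from (\ref{invKF}); and one must verify the completing-the-square identity for $\widehat\tau$, whose key algebraic content is that $\widehat\tau M^{-1}\overline{\widehat\tau}$ is real symmetric, so that the $d\theta_{\gamma^k}\wedge d\theta_{\gamma^l}$ terms drop out. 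Throughout, the compatibility of $\Omega$ with the ASK structure (Definition \ref{defT}, eq.\ (\ref{non-deg})) is exactly the statement that $T$, whose Hermitian components in this frame are the $M_{ij}$, is horizontally non-degenerate, which is precisely what lets the inverse $M^{ij}$ appearing in (\ref{KF3}) exist.
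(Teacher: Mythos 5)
Your proof is correct, and the parts concerning $\varpi$, $\omega_1$, $\omega_2$ coincide with the paper's own computation. Where you genuinely diverge is in the organization of the $\omega_3$ computation. The paper splits each $Y_i$ into its real part $d\theta_{\widetilde{\gamma}_i}-\text{Re}(\tau_{ik})d\theta_{\gamma^k}+2\pi A_i^{\text{inst}}$ and its imaginary part $-M_{ik}d\theta_{\gamma^k}$, expands $M^{ij}Y_i\wedge\overline{Y}_j$ in a single pass, kills both ``diagonal'' pieces by the symmetry of $M^{ij}$, and reads off the instanton terms of (\ref{invKF}) from the cross terms. You instead absorb the $V^{\text{inst}}$-correction into a corrected period matrix $\widehat{\tau}=\tau+iN^{\text{inst}}$, decompose $Y_i=\widetilde{W}_i+2\pi A_i^{\text{inst}}$, and recycle the completing-the-square identity already established for the semi-flat forms in Section \ref{GMNsemiflat}, with $(\tau,N)$ replaced by $(\widehat{\tau},M)$; the cross and $A^{\text{inst}}\wedge A^{\text{inst}}$ pieces are then dispatched by the same symmetry and reality arguments that the paper uses implicitly. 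The cancellations are ultimately identical, so this is a repackaging rather than a new argument, but your version is more modular and arguably more illuminating: it exhibits the corrected forms as formally semi-flat forms for the (non-holomorphic) period matrix $\widehat{\tau}$, shifted by the real $1$-forms $A_i^{\text{inst}}$, and it names precisely the two facts on which everything hinges --- $\widehat{\tau}$ symmetric with $\text{Im}\,\widehat{\tau}=M$ invertible (which is exactly the compatibility hypothesis), and $A_i^{\text{inst}}$ real (from $\overline{A_{\gamma}^{\text{inst}}}=-A_{-\gamma}^{\text{inst}}$, $\Omega(\gamma)=\Omega(-\gamma)$, $n_i(-\gamma)=-n_i(\gamma)$) --- facts the paper's condensed display exploits without isolating. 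Your side claim that $\widehat{\tau}M^{-1}\overline{\widehat{\tau}}$ is real symmetric is also correct, since it equals $\text{Re}(\widehat{\tau})M^{-1}\text{Re}(\widehat{\tau})+M$, though only its symmetry is actually needed for the $d\theta_{\gamma^k}\wedge d\theta_{\gamma^l}$ terms to drop out. The paper's route is shorter; yours transfers the semi-flat computation as a black box and would adapt more directly to other corrections of the same structural shape.
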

\begin{remark} Note that the symmetric matrix $M_{ij}$ is real, in virtue of the 
property $\Omega (-\gamma ) = \Omega  (\gamma)$ and $\overline{V_{\gamma}^{\text{inst}}}=V_{-\gamma}^{\text{inst}}$. On the other hand, the fact that $M_{ij}$ is invertible follows from the compatibility condition (\ref{non-deg}) written with respect to $(\widetilde{\gamma}_i,\gamma^i)$.
\end{remark}
\begin{proof} Using that $dZ_{\widetilde{\gamma}_i}=\tau_{ij}dZ_{\gamma^j}$ we obtain
\begin{equation}
    \begin{split}
        \varpi&=-\frac{1}{2\pi}\langle dZ\wedge d\theta \rangle + \sum_{\gamma}\Omega(\gamma)dZ_{\gamma}\wedge A_{\gamma}^{\text{inst}}  +\frac{i\Omega(\gamma)}{2\pi }V_{\gamma}^{\text{inst}}d\theta_{\gamma}\wedge dZ_{\gamma}\\
        &=\frac{1}{2\pi}dZ_{\gamma^i}\wedge (d\theta_{\widetilde{\gamma}_i}-\tau_{ij}d\theta_{\gamma^j})+\frac{1}{2\pi}dZ_{\gamma^i}\wedge \Big(\sum_{\gamma}\Omega(\gamma)n_i(\gamma)(2\pi A_{\gamma}^{\text{inst}}-iV_{\gamma}^{\text{inst}}d\theta_{\gamma})\Big)\\
        &=\frac{1}{2\pi}dZ_{\gamma^i}\wedge Y_i\,.
    \end{split}
\end{equation}
The formulas for $\omega_1$ and $\omega_2$ then follow from $\omega_1=\text{Re}(\varpi)$ and $\omega_2=\text{Im}(\varpi)$.\\

On the other hand, to see that (\ref{KF3})  holds it is easy to check that:

\begin{equation}
    \frac{i}{2}M_{ij}dZ_{\gamma^i}\wedge d\overline{Z}_{\gamma^j}=\frac{1}{4}\langle dZ\wedge d\overline{Z} \rangle +\sum_{\gamma}\frac{i\Omega(\gamma)}{2}V^{\text{inst}}_{\gamma}dZ_{\gamma}\wedge d\overline{Z}_{\gamma}\,,
\end{equation}
while for the second term,  after expanding the terms in $Y_i$, using the fact that $M_{ij}$ is symmetric, and reorganizing terms we obtain
\begin{equation}
\begin{split}
    \frac{i}{8\pi^2}M^{ij}Y_i\wedge \overline{Y}_j&=\frac{i}{8\pi^2}M^{ij}\Big(2iM_{ik}[d\theta_{\widetilde{\gamma}_j}\wedge d\theta_{\gamma^k}+\sum_{\gamma}\Omega(\gamma)n_j(\gamma)2\pi A_{\gamma}^{\text{inst}}\wedge d\theta_{\gamma^k}+\text{Re}(\tau_{jl})d\theta_{\gamma^k}\wedge d\theta_{\gamma^l}]\Big)\\
    &=-\frac{1}{4\pi^2}\Big[d\theta_{\widetilde{\gamma}_k}\wedge d\theta_{\gamma^k}+\sum_{\gamma}\Omega(\gamma)2\pi A_{\gamma}^{\text{inst}}\wedge d\theta_{\gamma}+\text{Re}(\tau_{kl})d\theta_{\gamma^k}\wedge d\theta_{\gamma^l}\Big]\\
    &=-\frac{1}{8\pi^2}\langle d\theta \wedge d\theta \rangle+ \sum_{\gamma}\frac{\Omega(\gamma)}{2\pi}d\theta_{\gamma} \wedge A_{\gamma}^{\text{inst}}\,,
\end{split}
\end{equation}
so that (\ref{KF3}) holds.
\end{proof}
\begin{proof} (of Theorem $\ref{theorem1}$) We start by showing that the compatibility assumption (\ref{non-deg}) implies the non-degeneracy of the $\omega_{\alpha}$ forms. We fix a local Darboux frame $(\widetilde{\gamma}_i,\gamma^i)$ with $\text{Supp}(\Omega)\subset \text{span}\{\gamma^i\}$ and denote for simplicity $Z^i:=Z_{\gamma^i}$. We work locally with the real frame $\partial_{x^i}:=\partial_{Z^i}+\partial_{\overline{Z}^i}$, $\partial_{u^i}:=i(\partial_{Z^i}-\partial_{\overline{Z}^i})$, $\partial_{\theta^i}:=\partial_{\theta_{\gamma^i}}$ and $\partial_{\widetilde{\theta}_i}:=\partial_{\theta_{\widetilde{\gamma}_i}}$ and recall that the compatibility condition (\ref{non-deg}) implies the invertibility of the matrix $M_{ij}$.

\begin{itemize}
    \item Non-degeneracy of $\omega_1$: we write below relations that are sufficient to deduce non-degeneracy. Using (\ref{instW}) and (\ref{omega12}) we obtain
    \begin{equation}\label{non-deg-omega1}
        \begin{split}
     \omega_1(\partial_{\widetilde{\theta}_i},\partial_{x^j})&=-\frac{\delta_{ij}}{2\pi}, \;\;\;\;\;\;\;\omega_1(\partial_{\theta^i}, \partial_{u^j})=-\frac{1}{2\pi}M_{ij}\\
     \omega_1(\partial_{\widetilde{\theta}_i},\partial_{\theta^j})&=\omega_1(\partial_{\theta^i},\partial_{\theta^j})=\omega_1(\partial_{\widetilde{\theta}_i},\partial_{\widetilde{\theta}_j})=\omega_1(\partial_{u^i},\partial_{\widetilde{\theta}_j})=0\, .
        \end{split}
    \end{equation}
    So $\omega_1$ is represented by a block triangular matrix (with respect to the second diagonal) with invertible diagonal blocks. Hence, we conclude that $\omega_1$ is non-degenerate.
    
    \item Non-degeneracy of $\omega_2$: as before, we write below sufficient relations to deduce non-degeneracy. Using (\ref{instW}) and (\ref{omega12})
    \begin{equation}
        \begin{split}
         \omega_2(\partial_{\widetilde{\theta}_i},\partial_{u^j})&=-\frac{\delta_{ij}}{2\pi}, \;\;\;\;\;\;\;
         \omega_2(\partial_{\theta^i},\partial_{x^j})=\frac{1}{2\pi}M_{ij}\\
        \omega_2(\partial_{\widetilde{\theta}_i},\partial_{\theta^j})&=\omega_2(\partial_{\theta^i},\partial_{\theta^j})=\omega_2(\partial_{\widetilde{\theta}_i},\partial_{\widetilde{\theta}_j})=\omega_2(\partial_{x^i},\partial_{\widetilde{\theta}_j})=0\, .
        \end{split}
    \end{equation}
   So $\omega_2$ is represented by a block triangular matrix with invertible diagonal blocks (of size $2\dim_\mathbb{R} (M)$ on the second diagonal). This shows that $\omega_2$ is nondegenerate.
   
    \item Non-degeneracy of $\omega_3$: using (\ref{invKF}) and (\ref{instN}) we obtain the following sufficient relations to show non-degeneracy
    \begin{equation}
        \begin{split}
         \omega_3(\partial_{\widetilde{\theta}_i}, \partial_{\theta^j})&=-\frac{\delta_{ij}}{4\pi^2}, \;\;\;\;\;\;\; \omega_3(\partial_{x^i},\partial_{u^j})= M_{ij}\\
        \omega_3(\partial_{u^i},\partial_{u^j})&=\omega_3(\partial_{x^i},\partial_{\widetilde{\theta}_j})=\omega_3(\partial_{u^i},\partial_{\widetilde{\theta}_j})=\omega_3(\partial_{\widetilde{\theta}_i},\partial_{\widetilde{\theta}_j})=0 \,.
        \end{split}
    \end{equation}
    Using the block structure of the representing matrix we see that it can be transformed to block diagonal form with invertible diagonal blocks using row and column operations. This proves that it has maximal rank.
   
\end{itemize}

Now we check that
\begin{equation}
    d\omega_{\alpha}=0 \;\;\;\; \alpha=1,2,3 \,.
\end{equation}

For $\alpha=1,2$ it is enough to check that $d\varpi=0$, since $\varpi=\omega_1+i\omega_2$, and $\omega_1$ and $\omega_2$ are real. Using (\ref{holsym}), and the fact that the semi-flat part is closed, we have

\begin{equation}
    \begin{split}
    d\varpi=&\sum_{\gamma}\Omega(\gamma)\Big[\frac{i}{2\pi }dV_{\gamma}^{\text{inst}}\wedge d\theta_{\gamma} \wedge dZ_{\gamma}-dZ_{\gamma}\wedge dA_{\gamma}^{\text{inst}}\Big]\\
    =&\sum_{\gamma}\Omega(\gamma)\Big[-\frac{i}{4\pi}\sum_{n>0}ne^{in\theta_{\gamma}}K_1(2\pi n  |Z_{\gamma}|)\frac{Z_{\gamma}}{|Z_{\gamma}|}d\overline{Z}_{\gamma}\wedge d\theta_{\gamma}\wedge dZ_{\gamma} \\
    &-\frac{i}{4\pi}\sum_{n>0}ne^{in\theta_{\gamma}}K_1(2\pi n  |Z_{\gamma}|)\frac{|Z_{\gamma}|}{\overline{Z}_{\gamma}}dZ_{\gamma}\wedge d\theta_{\gamma}\wedge d\overline{Z}_{\gamma}\Big]\\
    =& \;\; 0
    \end{split}
\end{equation}
where we used that $K_0'=-K_1$.
On the other hand, using (\ref{invKF}) and the fact that the semi-flat part is closed, we obtain

\begin{equation}
    \begin{split}
        d\omega_3=& \sum_{\gamma}\Omega(\gamma)\Big[ \frac{i}{2}dV_{\gamma}^{\text{inst}}\wedge dZ_{\gamma}\wedge d \overline{Z}_{\gamma} - \frac{1}{2\pi} d\theta_{\gamma}\wedge dA_{\gamma}^{\text{inst}}\Big] \\
        =&\sum_{\gamma}\Omega(\gamma)\Big[ -\frac{1}{4\pi}\sum_{n>0}ne^{in\theta_{\gamma}}K_0(2\pi n |Z_{\gamma}|)d\theta_{\gamma}\wedge dZ_{\gamma}\wedge d\overline{Z}_{\gamma} \\
        &+ \frac{1}{4\pi}d\theta_{\gamma}\wedge\Big(\sum_{n>0}ne^{in\theta_{\gamma}}K_0(2\pi n |Z_{\gamma}|) dZ_{\gamma}\wedge d\overline{Z}_{\gamma}\Big)\Big]\\
        =&\;\; 0\end{split}
\end{equation}
where we used the identity $(xK_1(x))'=-xK_0(x)$.\\

We now define three endomorphism fields by
\begin{equation}
    I_{\alpha}=-\omega_\beta^{-1}\circ \omega_\gamma
\end{equation}
where $(\alpha,\beta,\gamma)$ are cyclically ordered. We will compute their action on a local frame of $TN\otimes \mathbb{C}$ and check that $I_1I_2=I_3$ and $I_{\alpha}^2=-1$.\\

We have a local frame of $T^*N\otimes \mathbb{C}$ given by $\{dZ^i, d\overline{Z}^i, Y_i,\overline{Y}_i\}$ (see Lemma \ref{usefulexpressions}). We consider the dual frame of vector fields $\{A_i,\overline{A}_i,B^i,\overline{B}^i\}$. Using (\ref{omega12}) and (\ref{KF3}) we find the following identities 

\begin{equation}
    \begin{split}
        \omega_1(A_i,-)=\frac{1}{4\pi}Y_i &\;\;\;\;\; \omega_1(B^i,-)=-\frac{1}{4\pi}dZ^i\\
        \omega_2(A_i,-)=\frac{1}{4\pi i}Y_i &\;\;\;\;\; \omega_2(B^i,-)=-\frac{1}{4\pi i}dZ^i\\
        \omega_3(A_i,-)=\frac{i}{2}M_{ij}d\overline{Z}^j & \;\;\;\;\; \omega_3(B^i,-)=\frac{i}{8\pi^2}M^{ij}\overline{Y}_j\,.
    \end{split}
\end{equation}
The evaluation on the conjugate elements of the frame give the conjugate of the corresponding expressions (recall that $(M_{ij})$ is a real symmetric matrix).\\

This allows us to compute the action of $I_{\alpha}$ on the frame, giving the following 
\begin{equation}\label{actcomplexstr}
    \begin{split}
        I_1(A_i)=2\pi M_{ij}\overline{B}^j &\;\;\;\;\; I_1(B^i)= -\frac{1}{2\pi}M^{ij}\overline{A}_j\\
        I_2(A_i)=-2\pi i M_{ij}\overline{B}^j & \;\;\;\;\; I_2(B^i)=\frac{i}{2\pi}M^{ij}\overline{A}_j\\
        I_3(A_i)=iA_i & \;\;\;\;\; I_3(B^i)=iB^i\,,
    \end{split}
\end{equation}
where again the value on the conjugate elements is given by taking conjugates on the RHS of the above equalities. From (\ref{actcomplexstr}) it is clear that
\begin{equation}
    I_1I_2=I_3 \;\;\;\;\;\; I_{\alpha}^2=-1 \;\;\; \text{for} \;\;\; \alpha=1,2,3\,,
\end{equation}
so each $I_{\alpha}$ defines an almost complex structure on $N$, and they satisfy the quaternion relations. These relations imply that the tensor field $\omega_\alpha \circ I_\alpha = \omega_\alpha (I_\alpha -,- )$ is independent of $\alpha$ and that the pseudo-Riemannian metric $g_N:=-\omega_{\alpha}\circ I_{\alpha}$ satisfies $I_{\alpha}^*g_N=g_N$ and $g_N(I_{\alpha}-,-)=\omega_{\alpha}(-,-)$.\\ 

Hence, we conclude that $(N,g_N,I_1,I_2,I_3)$ is almost pseudo-hyperk\"{a}hler, with $d\omega_{\alpha}=0$. By  Hitchin's lemma \cite[Lemma 6.8]{SDeq}, $(N,g_N,I_1,I_2,I_3)$ is then pseudo-hyperk\"{a}hler. \\

We have shown the sufficiency of the compatibility condition (\ref{non-deg}). The necessity follows from the previous identities used to show non-degeneracy of $\omega_{\alpha}$. Indeed, if $T$ in (\ref{non-deg}) is horizontally degenerate for some $p\in N$, then with respect to a Darboux frame $(\widetilde{\gamma}_i,\gamma^i)$ of $\Lambda$ around $\pi(p)\in M$ with $\text{Supp}(\Omega)\subset \text{span}\{\gamma^i\}$ the corresponding $M_{ij}$ is not invertible at $p$, and then the forms $\omega_{\alpha}$ can be shown to be degenerate at $p$.
\end{proof}

\begin{corollary}\label{HKmetriccoord} Let $(N,g_N,I_1,I_2,I_3)$ and $\{dZ_{\gamma^i}, d\overline{Z}_{\gamma^i}, Y_i,\overline{Y}_i\}$ be the pseudo-HK manifold and local frame of $T^*N\otimes \mathbb{C}$ from the previous theorem. Then in such a frame the metric $g_N$ has the local form  
\begin{equation}\label{HKmetric}
\begin{split}
    g_N&=dZ_{\gamma^i}M_{ij}d\overline{Z}_{\gamma^j}+\frac{1}{4\pi^2}Y_iM^{ij} \overline{Y}_j\\
    &=dZ_{\gamma^i}(N_{ij}+N_{ij}^{\text{inst}})d\overline{Z}_{\gamma^j}+\frac{1}{4\pi^2}(W_i+W_i^{\text{inst}})(N+N^{\text{inst}})^{ij}(\overline{W}_j+\overline{W}_j^{\text{inst}}) \,.  
\end{split}
\end{equation}
\end{corollary}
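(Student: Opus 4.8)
The plan is to read the metric straight off the data already assembled in the proof of Theorem \ref{theorem1}, where $g_N$ was defined by $g_N=-\omega_\alpha\circ I_\alpha$ and shown to be independent of $\alpha$. I would work with $\alpha=3$, since among the three complex structures $I_3$ is the one acting diagonally on the local frame $\{A_i,\overline{A}_i,B^i,\overline{B}^i\}$ dual to $\{dZ_{\gamma^i},d\overline{Z}_{\gamma^i},Y_i,\overline{Y}_i\}$: from (\ref{actcomplexstr}) it has eigenvalue $i$ on $A_i$ and $B^i$. This makes the computation essentially immediate and routes everything through the already-derived formula (\ref{KF3}).

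First I would note that $I_3(A_i)=iA_i$ and $I_3(B^i)=iB^i$ identify $\{A_i,B^i\}$ as a frame for $T^{1,0}N$, hence $\{dZ_{\gamma^i},Y_i\}$ as a $(1,0)$-coframe for $I_3$ (indeed $dZ_{\gamma^i}$ and $Y_i$ annihilate $\overline{A}_j,\overline{B}^j$). Consequently (\ref{KF3}) exhibits $\omega_3$ as a genuine $(1,1)$-form. Since $(N,g_N,I_3)$ is pseudo-Kähler by Theorem \ref{theorem1}, the metric is recovered from its Kähler form by the standard relation: a Hermitian expression $\omega=\tfrac{i}{2}h_{a\bar b}\,\theta^a\wedge\overline{\theta}^b$ in a $(1,0)$-coframe $\{\theta^a\}$ corresponds to $g=h_{a\bar b}\,\theta^a\overline{\theta}^b$ in the symmetric-product convention. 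Reading off $h$ from (\ref{KF3}) as the block form built from $M_{ij}$ and $\tfrac{1}{4\pi^2}M^{ij}$ gives the first line of (\ref{HKmetric}) at once.

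To keep the argument self-contained I would also record the frame-by-frame check, evaluating $g_N(e_a,e_b)=-\omega_3(I_3e_a,e_b)$ on all pairs using the tabulated contractions $\omega_3(A_i,-)=\tfrac{i}{2}M_{ij}\,d\overline{Z}_{\gamma^j}$ and $\omega_3(B^i,-)=\tfrac{i}{8\pi^2}M^{ij}\,\overline{Y}_j$ and their conjugates. Because $dZ_{\gamma^i}$ and $Y_i$ are $(1,0)$, every same-type and cross block vanishes, leaving only $g_N(A_i,\overline{A}_j)=\tfrac{1}{2}M_{ij}$ and $g_N(B^i,\overline{B}^j)=\tfrac{1}{8\pi^2}M^{ij}$ together with their conjugate-symmetric partners. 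Reassembling $g_N=\sum g_N(e_a,e_b)\,\theta^a\otimes\theta^b$ against the dual coframe, the two orderings of each mixed term combine — using the reality and symmetry of $M_{ij}$ — and the symmetrization absorbs the factor of two, reproducing $M_{ij}\,dZ_{\gamma^i}d\overline{Z}_{\gamma^j}+\tfrac{1}{4\pi^2}M^{ij}Y_i\overline{Y}_j$.

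The second equality in (\ref{HKmetric}) is then purely definitional: substitute $M_{ij}=N_{ij}+N_{ij}^{\text{inst}}$ and $M^{ij}=(N+N^{\text{inst}})^{ij}$ from (\ref{instN}) and $Y_i=W_i+W_i^{\text{inst}}$ from (\ref{instW}). I do not expect a genuine obstacle here; the only point demanding care is the normalization bookkeeping in the first step — keeping straight the factor of two hidden in the symmetric-product convention together with the reality of $M_{ij}$ — which is precisely why I would make the Kähler-form shortcut the primary argument and treat the explicit frame computation as a confirmation.
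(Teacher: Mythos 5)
Your proposal is correct and follows essentially the same route as the paper: the paper's proof likewise reads the first equality off (\ref{KF3}) using that $dZ_{\gamma^i}$ and $Y_i$ are $(1,0)$-forms for $I_3$ by (\ref{actcomplexstr}), and obtains the second equality by substituting the definitions from Lemma \ref{usefulexpressions}. Your additional frame-by-frame verification via $g_N(e_a,e_b)=-\omega_3(I_3e_a,e_b)$ is a sound (if redundant) confirmation of the same computation.
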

\begin{proof}
The first equality follows from (\ref{KF3}) together with the fact that $dZ_{\gamma^i}$ and $Y_i$ are $(1,0)$ forms with respect to $I_3$ due to (\ref{actcomplexstr}). The second equality follows from the formulas in Lemma \ref{usefulexpressions}.
\end{proof}
\begin{remark} the semi-flat HK metric corresponds in this case to

\begin{equation}
    g^{\text{sf}}=dZ_{\gamma^i}N_{ij}d\overline{Z}_{\gamma^j}+\frac{1}{4\pi^2}W_iN^{ij} \overline{W}_j\,.
\end{equation}
This expression shows that the torus bundle 
$\pi: (N,g^{\text{sf}})\rightarrow 
(M,g_M)$ is a Riemannian submersion with totally geodesic horizontal distribution and flat fibers. Due to the latter property, the metric $g^{\text{sf}}$ is called semi-flat.\\

This picture is lost when including the instanton corrections. Indeed, the horizontal part of the metric $g_N$ is no longer basic, since the functions $N_{ij}^{\text{inst}}$ are not basic. Moreover, the metric on the fibers is no longer translational invariant and the coordinate vector fields 
$\frac{\partial}{\partial Z_{\gamma^i}}, \frac{\partial}{\partial \overline{Z}_{\gamma^i}}$ are not  
horizontal, i.e.\ perpendicular to the fibers.

\end{remark}
\subsubsection{The CASK case and the rotating action}

We now assume that we start with an integral CASK manifold $(M,g_{M},\omega_M,\nabla,\xi,\Lambda)$, together with a compatible, mutually-local $\Omega$. We denote by $(N,g_N,\omega_1,\omega_2,\omega_3)$ the corresponding pseudo-HK manifold from Theorem \ref{theorem1}.

\begin{definition}
Let $(N,g_N,\omega_1,\omega_2,\omega_3)$ be a pseudo-HK manifold. An infinitesimal rotating circle action is a Killing vector field $V$ on $N$ such that $\mathcal{L}_V(\omega_1+i\omega_2)=i(\omega_1+i\omega_2)$ and $\mathcal{L}_V\omega_3=0$.
\end{definition}

In \cite[Section 3]{Conification} it is shown that one can lift $J\xi$ from $M$ to $N$, in such a way we get an infinitesimal rotating circle action for the semi-flat HK structure on $N$. The lift $V$ of $J\xi$ is defined as follows: if $q^{i}$ are special affine coordinates for $M$ and $(\pi^*q^i,p_i)$ the corresponding coordinates on $T^*M$, then 

\begin{equation}\label{lift}
    V(\pi^* q^i)=\pi^*\left(J\xi (q^i)\right), \;\;\;\;\; V(p_i)=0\,.
\end{equation}
This local definition does not depend on the choice of special affine coordinates, and hence defines global lift $V$ of $J\xi$. The vector field $V$ is, in fact, the $\nabla$-horizontal lift of $J\xi$ and furthermore descends to $N=T^*M/\Lambda^*$. 

\begin{proposition}\label{rotatingprop} 
The vector field $V$ defines an infinitesimal rotating circle action for the instanton corrected HK structure $(N,g_N,\omega_1,\omega_2,\omega_3)$.

\end{proposition}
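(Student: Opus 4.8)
The plan is to verify directly the three defining conditions of an infinitesimal rotating circle action for $(N,g_N,\omega_1,\omega_2,\omega_3)$, working from the explicit formulas (\ref{holsym}) and (\ref{invKF}) and exploiting the special way in which $V$ acts on the building blocks $Z_\gamma$, $\theta_\gamma$, $V_\gamma^{\text{inst}}$ and $A_\gamma^{\text{inst}}$. First I would record the action of $V$ on functions and basic one-forms. Since $V$ is the $\nabla$-horizontal lift of $J\xi$, it projects to $J\xi$, so on the pulled-back central charge coordinates it acts by $\mathcal{L}_V Z_\gamma = \mathcal{L}_{J\xi}Z_\gamma = iZ_\gamma$ (in the CASK case the $Z_\gamma$ are conical holomorphic special coordinates, homogeneous of degree one under $\xi^{1,0}$), whence $\mathcal{L}_V dZ_\gamma = i\,dZ_\gamma$ and $\mathcal{L}_V d\overline{Z}_\gamma = -i\,d\overline{Z}_\gamma$. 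On the other hand, because $V$ is $\nabla$-horizontal and the Darboux frame $(\widetilde{\gamma}_i,\gamma^i)$ is $\nabla$-flat, $V$ preserves the fiber coordinates: this is the infinitesimal version of $V(p_i)=0$ in (\ref{lift}), giving $\mathcal{L}_V\theta_\gamma = 0$ and $\mathcal{L}_V d\theta_\gamma = 0$.

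The crucial observation, and the step on which the whole argument hinges, is the cancellation
\begin{equation*}
\mathcal{L}_V|Z_\gamma|^2 = \overline{Z}_\gamma\,\mathcal{L}_V Z_\gamma + Z_\gamma\,\mathcal{L}_V\overline{Z}_\gamma = i|Z_\gamma|^2 - i|Z_\gamma|^2 = 0,
\end{equation*}
so that $V(|Z_\gamma|)=0$. Since $V_\gamma^{\text{inst}}$ depends on $N$ only through $\theta_\gamma$ and $|Z_\gamma|$, this yields $\mathcal{L}_V V_\gamma^{\text{inst}} = 0$; and since the one-form $\tfrac{dZ_\gamma}{Z_\gamma}-\tfrac{d\overline{Z}_\gamma}{\overline{Z}_\gamma} = d\log Z_\gamma - d\log\overline{Z}_\gamma$ is sent to $d(i)-d(-i)=0$ while its scalar prefactor again depends only on $\theta_\gamma$ and $|Z_\gamma|$, also $\mathcal{L}_V A_\gamma^{\text{inst}} = 0$. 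Thus the entire instanton data is $V$-invariant. Consequently each instanton monomial in $\varpi$ carries exactly one factor of $dZ_\gamma$ and so picks up a factor $i$, matching the behaviour of the semi-flat part; and each instanton monomial in $\omega_3$ carries either a pair $dZ_\gamma\wedge d\overline{Z}_\gamma$ (contributing $i-i=0$) or a factor $d\theta_\gamma$ (contributing $0$), again matching the semi-flat part. A term-by-term check then gives $\mathcal{L}_V\varpi = i\varpi$ and $\mathcal{L}_V\omega_3 = 0$, which are the second and third conditions.

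It remains to show that $V$ is Killing. Here I would use the relation $I_\alpha = -\omega_\beta^{-1}\circ\omega_\gamma$ established in the proof of Theorem \ref{theorem1}. Writing $\mathcal{L}_V\varpi = i\varpi$ as $\mathcal{L}_V\omega_1 = -\omega_2$, $\mathcal{L}_V\omega_2 = \omega_1$, and using the identity $\mathcal{L}_V(\omega_1^{-1}) = -\omega_1^{-1}(\mathcal{L}_V\omega_1)\omega_1^{-1}$, a short computation with $\omega_1^{-1}\omega_2 = -I_3$ and $I_3^2 = -\mathrm{Id}$ gives $\mathcal{L}_V I_3 = -(\omega_1^{-1}\omega_2)^2 - \mathrm{Id} = \mathrm{Id} - \mathrm{Id} = 0$. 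Since $g_N = -\omega_3\circ I_3$ and both $\omega_3$ and $I_3$ are $V$-invariant, $\mathcal{L}_V g_N = 0$ follows at once.

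I expect no serious obstacle: once the action of $V$ on $Z_\gamma$, $\theta_\gamma$ and $|Z_\gamma|$ is pinned down, the verification reduces to bookkeeping on the explicit forms. The one point requiring genuine care is the $V$-invariance of the instanton pieces $V_\gamma^{\text{inst}}$ and $A_\gamma^{\text{inst}}$, which rests entirely on the cancellation $V(|Z_\gamma|)=0$; this is precisely what guarantees that the instanton corrections transform under $V$ exactly like the semi-flat terms and therefore do not spoil the rotating property inherited from \cite{Conification}.
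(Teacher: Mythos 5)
Your proposal is correct and takes essentially the same approach as the paper: the key points $\mathcal{L}_V Z_\gamma = iZ_\gamma$, $\mathcal{L}_V\theta_\gamma=0$, the resulting $V$-invariance of $V_\gamma^{\text{inst}}$ and $A_\gamma^{\text{inst}}$, and the term-by-term conclusion $\mathcal{L}_V\varpi=i\varpi$, $\mathcal{L}_V\omega_3=0$ are exactly the paper's steps. Your final derivation of the Killing property (via $\mathcal{L}_V I_3=0$ and $g_N=-\omega_3\circ I_3$) is only a cosmetic repackaging of the paper's argument, which differentiates $g_N=\omega_3\circ\omega_1^{-1}\circ\omega_2$ directly using the same quaternionic relation $(\omega_1^{-1}\circ\omega_2)^2=-\mathrm{Id}$.
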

\begin{proof}
The vector field $V$ of $N$ satisfies the following:

\begin{itemize}
    \item The central charge $Z$ satisfies $\mathcal{L}_V\pi^*Z_{\gamma}=\mathcal{L}_{J\xi}Z_{\gamma}=iZ_{\gamma}$ (recall Proposition \ref{centralchargeCASK} and the remark below Definition \ref{conicalholspecialcoords}). 
    Here and in the following we have omitted $\pi^*$ on the right-hand side for ease of notation, identifying function on $M$ with functions on $N$ via pull-back.
    \item The angle coordinates $\theta_{\gamma}$ are invariant by (\ref{lift}), i.e $\mathcal{L}_V\theta_{\gamma}=0$.
\end{itemize}

From the previous two points and (\ref{inst1}) we conclude that $\mathcal{L}_VV_{\gamma}^{\text{inst}}=0$ and $\mathcal{L}_VA_{\gamma}^{\text{inst}}=0$. Hence, from (\ref{holsym}) and (\ref{invKF}) we see that $\mathcal{L}_V\varpi=i\varpi$ while $\mathcal{L}_V\omega_3=0$. This implies that $\mathcal{L}_Vg_N=0$ by differentiating the identity  $g_N=\omega_3\circ \omega_1^{-1}\circ \omega_2$ and using the skew-symmetry of  $\omega^{-1}_{\alpha}\circ \omega_{\beta}$ in the indices $(\alpha,\beta)$. We conclude that $V$ defines an infinitesimal rotating circle action for the HK structure $(N,g_N,\omega_1,\omega_2,\omega_3)$.
\end{proof}

 Using our local expression (\ref{HKmetric}) of $g_N$, we see that we can write $g_N$ in the local real frame $dx^i:=\text{Re}(dZ_{\gamma^i})$, $du^i:=\text{Im}(dZ_{\gamma^i})$, $\alpha_i:=\text{Re}(Y_i)$ and $\beta_i:=\text{Im}(Y_i)$ as
 
 \begin{equation}
     g=M_{ij}(dx^idx^j + du^idu^j) +\frac{1}{4\pi^2}M^{ij}(\alpha_i\alpha_j + \beta_i\beta_j)\,.
 \end{equation}
 In particular, we see that if the (real) matrix $M_{ij}$ has signature $(n,m)$ then $g$ has signature $\sigma(g)=(4n,4m)$. In the CASK case we can say more:
 
\begin{proposition}\label{signatureHK} Let $(N,g_N,\omega_1,\omega_2,\omega_3)$ be an instanton corrected HK metric associated to a connected integral CASK manifold $(M,\omega,g_{M},\nabla,\xi,\Lambda)$ with a compatible $\Omega$. If the flow of $\xi$ induces a free-action on $M$ of the (multiplicative) monoid $\mathbb{R}_{\geq 1}$, then $\sigma(g)=\sigma(g^\text{sf})=(4,4n)$, where $\text{dim}_{\mathbb{C}}(M)=n+1$.
 
\end{proposition}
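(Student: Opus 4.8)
The plan is to reduce $\sigma(g)$ to the signature of the real symmetric matrix $M_{ij}=N_{ij}+N^{\text{inst}}_{ij}$ of \eqref{instN}, and then to switch off the instanton term $N^{\text{inst}}_{ij}$ by flowing along $\xi$ without ever letting $M_{ij}$ become degenerate. First I would record that, in the real frame $dx^i,du^i,\alpha_i,\beta_i$ of the display preceding the statement, $g$ is block diagonal with four blocks, two congruent to $M_{ij}$ and two to its inverse $M^{ij}$; by Sylvester's law of inertia each block has the signature of $M_{ij}$, so $\sigma(g)=(4p,4q)$ where $(p,q)$ is the (frame-independent, by congruence invariance) signature of $M_{ij}$, equivalently of the horizontal tensor $T$ of \eqref{non-deg}. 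Taking $\Omega=0$ gives $\sigma(g^{\text{sf}})=(4p_0,4q_0)$ with $(p_0,q_0)=\mathrm{sig}(N_{ij})$. Since in conical special coordinates $g_M=N_{ij}\,dZ_{\gamma^i}\,d\overline{Z}_{\gamma^j}$ and the CASK condition (Definition \ref{defCASK}) forces $g_M$ to have real signature $(2,2n)$ (positive on $\mathcal{D}=\mathrm{span}\{\xi,J\xi\}$, negative on $\mathcal{D}^\perp$), we get $\mathrm{sig}(N_{ij})=(1,n)$ and hence $\sigma(g^{\text{sf}})=(4,4n)$. It then remains to prove $\mathrm{sig}(M_{ij})=(1,n)$ at every point of $N$.

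Next I would exploit the Euler field. Let $\widetilde{\xi}$ be the $\nabla$-horizontal lift of $\xi$ to $N$, defined exactly as in \eqref{lift} but lifting $\xi$ rather than $J\xi$, so that the angle coordinates $\theta_\gamma$ are preserved ($\mathcal{L}_{\widetilde{\xi}}\theta_\gamma=0$). By hypothesis the flow of $\xi$ generates a free action of the monoid $\mathbb{R}_{\geq 1}$, which guarantees that the forward flow $\phi_t$ of $\widetilde{\xi}$, $t\geq 0$, is complete. Fix $q\in N$ with $p=\pi(q)$ and set $q_t:=\phi_t(q)$. Because $\tau_{ij}$ is holomorphic and homogeneous of degree $0$ (the CASK identity $\frac{\partial\tau_{ij}}{\partial z^k}z^k=0$ gives $\mathcal{L}_\xi\tau_{ij}=0$), the matrix $N_{ij}$ is constant along $q_t$, equal to $N_{ij}(p)$. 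On the other hand $\mathcal{L}_\xi Z_\gamma=Z_\gamma$ (Proposition \ref{centralchargeCASK} and the homogeneity of conical special coordinates), so $|Z_\gamma(q_t)|=e^t|Z_\gamma(p)|$, which by the support property \eqref{supportproperty} tends to $\infty$ for every $\gamma\in\mathrm{Supp}(\Omega)$ while $\theta_\gamma(q_t)$ stays fixed. Note that $\xi$, and not $J\xi$, is the right field to use here: $J\xi$ merely rotates the phases of $Z_\gamma$ and leaves $|Z_\gamma|$ unchanged, whereas $\xi$ dilates them.

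Finally I would let $t\to\infty$. Using $K_\nu(x)\sim\sqrt{\pi/2x}\,e^{-x}$ together with the support and convergence properties \eqref{supportproperty}--\eqref{convergenceproperty}, exactly as in the proof of Lemma \ref{mutuallylocalforms}, one bounds $N^{\text{inst}}_{ij}(q_t)$ by a convergent series of terms decaying like $e^{-2\pi(1-\epsilon)e^t|Z_\gamma(p)|}$, so that $N^{\text{inst}}_{ij}(q_t)\to 0$ and hence $M_{ij}(q_t)\to N_{ij}(p)$, a matrix of signature $(1,n)$. Since $\Omega$ is compatible, $M_{ij}$ is non-degenerate at every point of the path $q_t$ (Definition \ref{defT}); as the signature of a non-degenerate symmetric matrix is locally constant and its spectrum varies continuously, $\mathrm{sig}(M_{ij}(q_t))$ is constant in $t$ and equals that of the limit $N_{ij}(p)$, namely $(1,n)$. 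In particular $\mathrm{sig}(M_{ij}(q))=(1,n)$, and since $q$ was arbitrary we conclude $\sigma(g)=(4,4n)=\sigma(g^{\text{sf}})$. The main obstacle is precisely this last limit: one must control the entire instanton sum $N^{\text{inst}}_{ij}$, not just each summand, uniformly as $t\to\infty$ when $\mathrm{Supp}(\Omega)$ is infinite, which is where the quantitative support and convergence properties are indispensable; the remainder is the soft continuity-of-signature argument along a path of non-degenerate matrices.
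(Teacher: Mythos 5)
Your proof is correct and follows essentially the same route as the paper: reduce $\sigma(g_N)$ to the signature of $M_{ij}$, flow along the $\nabla$-horizontal lift of $\xi$ (which fixes $\theta_\gamma$, leaves $N_{ij}$ invariant by homogeneity of $\tau_{ij}$, and scales $|Z_\gamma|$), use the support and convergence properties to make $N^{\text{inst}}_{ij}$ vanish in the limit, and conclude by constancy of the signature of a continuously varying non-degenerate form. The only cosmetic difference is that the paper invokes connectedness of $N$ to reduce to a single point far out along the ray, whereas you run the continuity-of-signature argument along the flow line starting from an arbitrary point; the underlying mechanism is identical.
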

 
\begin{proof} 
By the connectedness assumption, it is enough to check the signature at a single point. We fix $p\in M$ and consider the ray $l_p:=\mathbb{R}_{\geq 1}\cdot p\subset M$ obtained by the free $\mathbb{R}_{\geq 1}$-action on $M$, and a Darboux frame $(\widetilde{\gamma}_i,\gamma^i)$ of $\Lambda$ with $\text{Supp}(\Omega)\subset \text{span}\{\gamma^i\}$ along $l_p$. With respect to this frame we have the matrix $M_{ij}$ controlling $\sigma(g)$:

\begin{equation}
    M_{ij}=\text{Im}(\tau_{ij}) + \sum_{\gamma}\Omega(\gamma)V_{\gamma}^{\text{inst}}n_i(\gamma)n_j(\gamma)\,.
\end{equation}
We consider the $\nabla$-horizontal lift of $\xi$ to $N$. Such a lift generates a free $\mathbb{R}_{\geq 1}$-action on $\pi^{-1}(l_p)\subset N$ that scales $Z_{\gamma}$ and leaves $\theta_{\gamma}$ invariant. The term $N_{ij}=\text{Im}(\tau_{ij})$ is invariant under the $\mathbb{R}_{\geq 1}$-action scaling the central charge (since $M$ is CASK), while the terms $V_{\gamma}^{\text{inst}}$ are exponentially decreasing as we act with a sufficiently big $t\in \mathbb{R}_{\geq 1}$ (due to the asymptotics of the Bessel functions). Hence, by the use of the convergence property (\ref{convergenceproperty}) and the $\mathbb{R}_{\geq 1}$-action we can make $M_{ij}$ as close as we want to $N_{ij}$ for an appropriate point in $\pi^{-1}(l_p)$. Hence, $\sigma(g_N)=\sigma(g^{\text{sf}})=2\sigma(g_{M})=(4,4n)$.
\end{proof}

\end{subsection}

\begin{section}{HK-QK correspondence for the corrected HK metric}
\label{hyperholsec}
Consider an integral CASK manifold $(M,g_{M},\omega_M,\nabla,\xi,\Lambda)$  together with a compatible, mutually local $\Omega$. From the results of Theorem \ref{theorem1} and Proposition \ref{rotatingprop} we obtain an HK manifold $(N,g_N,\omega_1,\omega_2,\omega_3)$ with an infinitesimal rotating action given by the vector field $V$. In this section we apply to $(N,g_N,\omega_1,\omega_2,\omega_3)$ the explicit description of the HK-QK  correspondence found in \cite[Theorem 2]{QKPSK}, to obtain a possibly indefinite QK manifold $(\overline{N},g_{\overline{N}})$.\\

One of the things that we will require in order to apply the HK-QK correspondence to $(N,g_N,\omega_1,\omega_2,\omega_3)$, is an $S^1$-principal bundle $\pi_N:P\to N$ having a connection $\eta \in \Omega^1(P)$ with curvature $F\in \Omega^2(N)$ satisfying

\begin{equation}\label{hyperhol}
    F=2\pi(\omega_3 -d(\iota_Vg_N)) \,.
\end{equation}

\begin{remark}\label{hyperholremark} One can show that $F$ is of type $(1,1)$ with respect to the triple of complex structures $I_{\alpha}$ for $\alpha=1,2,3$ (see for example \cite[Proposition 1]{HitHKQK}). Hence,  the principal circle bundle $(\pi_N:P\to N,\eta)$ is hyperholomorphic, in the sense that the complex line bundle associated to the defining representation of $U(1)$ is holomorphic for all three $I_\alpha$. \\

After constructing the required hyperholomorphic circle bundle in Section \ref{HHbundle}, we apply the HK-QK correspondence in Section \ref{HK-QK}. The main result of this section is Theorem \ref{theorem2}.

\end{remark}

\begin{subsection}{The hyperholomorphic circle bundle}\label{HHbundle}

In this section we construct the hyperholomorphic bundle $(\pi_N:P\to N,\eta)$. We use the notation $\pi_M: N\to M$ for the natural projection.

\begin{proposition}\label{linebundle} There exists an $S^1$-principal bundle $\pi_N:P\to N$ with connection $\Theta$ having curvature
\begin{equation}\label{curvaturetheta}
    d\Theta=-\frac{1}{4\pi}\pi^*_N \langle d\theta\wedge d\theta \rangle \,.
\end{equation}
\end{proposition}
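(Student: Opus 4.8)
The plan is to realize $\Omega_\theta:=-\frac{1}{4\pi}\langle d\theta\wedge d\theta\rangle$ as the curvature of a prequantum connection, thereby reducing the statement to the classical existence theorem for $S^1$-bundles with prescribed integral curvature (Weil--Kostant). First I would check that $\Omega_\theta$ is a well-defined, real, closed $2$-form on $N$. Fixing a local Darboux frame $(\widetilde{\gamma}_i,\gamma^i)$ of $\Lambda$ one has, exactly as in the semi-flat computation preceding this section,
\begin{equation}
\Omega_\theta=-\frac{1}{2\pi}\,d\theta_{\widetilde{\gamma}_i}\wedge d\theta_{\gamma^i}.
\end{equation}
Since $\Lambda$ is a local system whose transition functions lie in $Sp(2n,\mathbb{Z})$ and the pairing $\langle-,-\rangle$ is covariantly constant and integral, these local expressions patch to a global form; and because each $d\theta_\gamma$ is closed while the transition functions are locally constant, $d\Omega_\theta=0$. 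Note that no special-geometry data (e.g.\ $\tau_{ij}$) enters this form, only the flat integral structure of $\Lambda$.

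The crux is to show that $[\Omega_\theta/2\pi]$ lies in the image of $H^2(N;\mathbb{Z})\to H^2_{\mathrm{dR}}(N)$. It suffices to verify that the periods of $\Omega_\theta/2\pi$ over a generating set of $H_2(N;\mathbb{Z})$ are integers. For the fibre $2$-tori $T_{(i)}\subset N$ on which only $\theta_{\widetilde{\gamma}_i},\theta_{\gamma^i}$ wind once, one computes
\begin{equation}
\frac{1}{2\pi}\int_{T_{(i)}}\Omega_\theta=-\frac{1}{4\pi^2}\int_{T_{(i)}}d\theta_{\widetilde{\gamma}_i}\wedge d\theta_{\gamma^i}=-\frac{(2\pi)^2}{4\pi^2}=-1,
\end{equation}
an integer. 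On classes coming from the base, and on product cycles (a base loop times a fibre loop) along which the monodromy of $\Lambda$ is trivial, the restriction of $\Omega_\theta$ vanishes because in a flat local trivialization $d\theta_\gamma$ is purely vertical; in general the integrality of all periods follows from the fact that $\Omega_\theta/2\pi$ is assembled from the forms $\frac{1}{2\pi}d\theta_\gamma$, which have integral periods over $1$-cycles, together with the integrality of the pairing. Granting this, the Weil--Kostant prequantization theorem produces an $S^1$-principal bundle $\pi_N:P\to N$ together with a connection $\Theta$ whose curvature is $d\Theta=\Omega_\theta$, which is exactly (\ref{curvaturetheta}).

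Alternatively, one can build $(P,\Theta)$ explicitly as a quotient. On $T^*M$ the genuine functions $\phi_\gamma:=2\pi\langle\alpha,\gamma\rangle$ descend mod $2\pi$ to the angles $\theta_\gamma$, and $\beta:=-\frac{1}{4\pi}\langle\phi,d\phi\rangle$ is a global primitive of the pull-back of $\Omega_\theta$. One then sets $P:=(T^*M\times S^1)/\Lambda^*$, with $\lambda\in\Lambda^*$ acting by $(\alpha,\psi)\mapsto(\alpha+\lambda,\psi+c_\lambda(\alpha))$ for a cocycle $c_\lambda$ chosen so that $\Theta:=d\psi+\beta$ is invariant and descends. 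I expect the main obstacle to be precisely the consistency of this construction: the shift of $\beta$ under a deck transformation forces $c_\lambda$ to contain a term $\tfrac12\langle\lambda,\cdot\rangle$, and checking that the resulting family $\{c_\lambda\}$ satisfies the cocycle condition modulo $2\pi$ runs into the half-integer term $\tfrac12\langle\lambda,\lambda'\rangle$ (a theta/quadratic-refinement subtlety). This is the same phenomenon that makes the integrality argument nontrivial on the mixed cycles, and its resolution---equivalently, the verification that $[\Omega_\theta/2\pi]$ is genuinely integral and not merely half-integral---is the heart of the proof; the period computation above, giving the integer $-1$ on each fibre, is what makes the construction go through.
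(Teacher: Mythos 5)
Your local formula $\Omega_\theta=-\tfrac{1}{2\pi}\,d\theta_{\widetilde{\gamma}_i}\wedge d\theta_{\gamma^i}$ and the fibre-period computation giving $-1$ are correct, but both of your routes stall at the same point, and that point is a genuine gap. For the Weil--Kostant route you need $[\Omega_\theta/2\pi]$ to be integral on \emph{all} of $H_2(N;\mathbb{Z})$, and you only establish this on the fibre $2$-tori. The patching argument you offer for the remaining cycles is not valid in general: the functions $\theta_\gamma$ (hence the $1$-forms $d\theta_\gamma$) exist only over open sets $U$ trivializing $\Lambda$, so $\Omega_\theta/2\pi$ is \emph{not} globally a sum of wedge products of closed $1$-forms with integral periods, and the cup-product argument applies only when the local system $\Lambda$ is trivial (e.g.\ for the CASK domains of Section \ref{QKdefFS}, where your argument does work). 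Proposition \ref{linebundle}, however, is stated for a general integral CASK manifold, where $\Lambda$ can have $Sp(2n,\mathbb{Z})$-monodromy. In that generality the integrality of the fibrewise polarization class on the total space is exactly the quadratic-refinement problem you flag in your second sketch, and integrality of the fibre periods is necessary but not sufficient: it neither makes the half-integer cocycle term $\tfrac12\langle\lambda,\lambda'\rangle$ close up modulo $2\pi$, nor controls the differentials of the Leray spectral sequence on mixed and base classes. So your closing claim that ``the period computation above \dots is what makes the construction go through'' is precisely where the proof fails.

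The paper's proof contains the idea you are missing, and it sidesteps integrality altogether: it realizes $P$ as the quotient of the trivial $\mathbb{R}$-bundle $T^*M\times\mathbb{R}$ (with the invariant form $\Theta=d\sigma-\tfrac{1}{4\pi}\langle p,dp\rangle$) by the bundle of \emph{discrete Heisenberg groups} $\mathrm{Heis}(\Lambda^*)=2\pi\Lambda^*\times\pi\mathbb{Z}$, with group law $(2\pi\delta,\pi k)\cdot(2\pi\delta',\pi k')=(2\pi(\delta+\delta'),\pi(k+k'+\langle\delta,\delta'\rangle))$ and action $(p,\sigma)\mapsto(p+2\pi\delta,\,\sigma+\pi k+\tfrac12\langle\delta,p\rangle)$. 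The decisive point is that the central circle has period $\pi$, not $2\pi$: the offending terms $\pi\langle\delta,\delta'\rangle$ are then \emph{full} periods, so the deck transformations form an honest group, $\Theta$ is invariant, and everything descends; since the construction is written intrinsically in terms of the pairing, monodromy of $\Lambda$ costs nothing. Equivalently, in your $2\pi$-normalization the paper's bundle prequantizes \emph{twice} the fibrewise polarization class, which admits the manifestly $Sp$-invariant integral cocycle $\langle\delta,\delta'\rangle$ and hence is integral for any monodromy, whereas your bundle would prequantize the polarization itself, whose integrality requires a global quadratic refinement that you have not produced (and which need not exist equivariantly). That is why the paper's construction is unconditional while yours, as written, proves the proposition only when $\Lambda$ is globally trivialized.
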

\begin{proof}
We start by considering the trivial $\mathbb{R}$-principal bundle $T^*M\times \mathbb{R}\to T^*M$. We denote by $p:T^*M \to \Lambda^*\otimes \mathbb{R}$ the evaluation map. Hence, if we are given a local trivialization $(\widetilde{\gamma}_i,\gamma^i)$ of $\Lambda$, then $(p_{\widetilde{\gamma}_i},p_{\gamma^i})$ gives coordinates for the fibers of $T^*M$. On $T^*M\times \mathbb{R} \to T^*M$ we define the connection

\begin{equation}
    \Theta:=d\sigma - \frac{1}{4\pi}\langle p,dp \rangle \,,
\end{equation}
where $\sigma$ is a global coordinate on the $\mathbb{R}$-fiber.\\

We now define the bundle of discrete Heisenberg groups $\text{Heis}(\Lambda^*)\to M$ where $\text{Heis}(\Lambda^*):= 2\pi \Lambda^* \times \pi \mathbb{Z}$ and the group structure on the fibers is given by

\begin{equation}
    (2\pi \delta, \pi k)\cdot (2\pi \delta', \pi k')=(2\pi (\delta + \delta'), \pi (k+k' + \langle \delta, \delta' \rangle)) \,.
\end{equation}

We define a fiber-wise action (as bundles over $M$) of $\text{Heis}(\Lambda^*)$ on $T^*M\times \mathbb{R}$  by 

\begin{equation}
    (2\pi \delta,\pi k) \cdot (p,\sigma) = (p + 2\pi \delta, \sigma + \pi k +\frac{1}{4\pi}\langle 2\pi \delta, p \rangle )\,,
\end{equation}
and on $T^*M$ by
\begin{equation}
    (2\pi\delta, \pi k) \cdot p = p+2\pi \delta\,.
\end{equation}

With these actions the projection $T^*M\times \mathbb{R}\to T^*M$ is clearly equivariant and $\Theta$ is invariant under the action on $T^*M\times \mathbb{R}$. By taking the quotient we obtain an $S^1$-principal bundle $\pi_N:P\to N$, and the connection descends to a connection $\Theta$ on $P$. It is given by 
\begin{equation}
    \Theta= d\sigma -\frac{1}{4\pi}\pi^*_N\langle \theta,d\theta \rangle \,,
\end{equation}
where the  coordinate $\sigma$ is now periodic with period $\pi$ and transforms as 
$\sigma \mapsto \sigma'=\sigma +\frac{1}{2}\pi^*_N\langle \delta,\theta \rangle$ under translations $\theta \mapsto \theta'=\theta+2\pi \delta$, $\delta \in \Lambda^*$, 
in the sense that $(\theta , \sigma)$ and $(\theta',\sigma')$ describe the same point in the fiber of $P$.\end{proof}

The following lemma gives an expression for $\omega_3$ that will be convenient for defining the connection $\eta$ on $\pi_N:P\to N$ with curvature (\ref{hyperhol}).
\begin{lemma} For $\gamma$ a section of $\Lambda|_U\cap \text{Supp}(\Omega)$ with $U\subset M$, let $\eta_{\gamma}^{\text{inst}}\in \Omega^{1}(\pi_M^{-1}(U))$ be defined by
\begin{equation}\label{etagammainst}
    \eta_{\gamma}^{\text{inst}}:=\frac{i}{8\pi^2}\sum_{n>0}\frac{e^{in\theta_{\gamma}}}{n}|Z_{\gamma}|K_1(2\pi n|Z_{\gamma}|)\Big(\frac{dZ_{\gamma}}{Z_{\gamma}}-\frac{d\overline{Z}_{\gamma}}{\overline{Z}_{\gamma}}\Big)\,.
\end{equation}
Letting $r^2:=g_{M}(\xi,\xi)$, we can then write $\omega_3$ as follows:
\begin{equation}\label{omega3int}
    \omega_3=\frac{i}{2}\partial\overline{\partial}r^2 -\frac{1}{8\pi^2}\langle d\theta \wedge d\theta \rangle + d\Big(\sum_{\gamma}\Omega(\gamma)\eta_{\gamma}^{\text{inst}}\Big)\,.
\end{equation}

\end{lemma}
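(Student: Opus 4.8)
The plan is to compare the claimed expression (\ref{omega3int}) directly with the definition (\ref{invKF}) of $\omega_3$ and reduce everything to a term-by-term exterior-derivative computation. First I would observe that the two leading terms already agree: the identity $\frac{1}{4}\langle dZ\wedge d\overline{Z}\rangle=\frac{i}{2}\partial\overline{\partial}r^2$ recorded at the end of Section \ref{GMNsemiflat} identifies the first summand of (\ref{invKF}) with the first summand of (\ref{omega3int}), while the terms $-\frac{1}{8\pi^2}\langle d\theta\wedge d\theta\rangle$ are literally the same. Since the variation of BPS structures is mutually local, $\Omega(\gamma)$ is locally constant (see the Remark after Definition \ref{defMutLocBPS}), so $d$ commutes with the coefficients $\Omega(\gamma)$ and I may pull it through the sum. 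Thus the whole statement reduces to the single, $\gamma$-by-$\gamma$ identity
\begin{equation}
d\eta_\gamma^{\text{inst}}=\frac{i}{2}V_\gamma^{\text{inst}}\,dZ_\gamma\wedge d\overline{Z}_\gamma+\frac{1}{2\pi}\,d\theta_\gamma\wedge A_\gamma^{\text{inst}},
\end{equation}
matching the remaining instanton terms of (\ref{invKF}).

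The core of the argument is then to compute $d\eta_\gamma^{\text{inst}}$ from (\ref{etagammainst}). The convenient observation is that the $1$-form $\frac{dZ_\gamma}{Z_\gamma}-\frac{d\overline{Z}_\gamma}{\overline{Z}_\gamma}=d\log(Z_\gamma/\overline{Z}_\gamma)$ appearing in $\eta_\gamma^{\text{inst}}$ is closed, so writing $\eta_\gamma^{\text{inst}}=\sum_{n>0}f_n\big(\frac{dZ_\gamma}{Z_\gamma}-\frac{d\overline{Z}_\gamma}{\overline{Z}_\gamma}\big)$ with $f_n:=\frac{i}{8\pi^2 n}e^{in\theta_\gamma}|Z_\gamma|K_1(2\pi n|Z_\gamma|)$, one has simply $d\eta_\gamma^{\text{inst}}=\sum_{n>0}df_n\wedge\big(\frac{dZ_\gamma}{Z_\gamma}-\frac{d\overline{Z}_\gamma}{\overline{Z}_\gamma}\big)$. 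Splitting $df_n$ into its $d\theta_\gamma$, $dZ_\gamma$ and $d\overline{Z}_\gamma$ parts, the $\theta_\gamma$-derivative merely multiplies by $in$, and $in f_n=-\frac{1}{8\pi^2}e^{in\theta_\gamma}|Z_\gamma|K_1(2\pi n|Z_\gamma|)$; summing over $n$ this reproduces precisely $\frac{1}{2\pi}d\theta_\gamma\wedge A_\gamma^{\text{inst}}$ by the definition (\ref{inst1}) of $A_\gamma^{\text{inst}}$. For the holomorphic and antiholomorphic derivatives I would use $\partial_{\overline{Z}_\gamma}|Z_\gamma|=\frac{Z_\gamma}{2|Z_\gamma|}$ (and its conjugate) together with the Bessel identity $(xK_1(x))'=-xK_0(x)$ already invoked in the proof of Theorem \ref{theorem1}; this turns $\partial_{\overline{Z}_\gamma}\big(|Z_\gamma|K_1(2\pi n|Z_\gamma|)\big)$ into a multiple of $K_0(2\pi n|Z_\gamma|)$. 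After wedging, the two surviving $dZ_\gamma\wedge d\overline{Z}_\gamma$ contributions add up to $\frac{i}{4\pi}e^{in\theta_\gamma}K_0(2\pi n|Z_\gamma|)\,dZ_\gamma\wedge d\overline{Z}_\gamma$, and summing over $n$ while using the definition (\ref{inst1}) of $V_\gamma^{\text{inst}}=\frac{1}{2\pi}\sum_{n>0}e^{in\theta_\gamma}K_0(2\pi n|Z_\gamma|)$ yields exactly $\frac{i}{2}V_\gamma^{\text{inst}}\,dZ_\gamma\wedge d\overline{Z}_\gamma$.

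The only real subtlety I expect is justifying the term-by-term exterior differentiation of the infinite series, i.e.\ that $d$ commutes with $\sum_{n>0}$ and with $\sum_\gamma$. This is exactly the type of estimate already carried out in the proof of Lemma \ref{mutuallylocalforms}: the exponential decay $K_\nu(x)\sim\sqrt{\pi/2x}\,e^{-x}$ of the Bessel functions, the support property (\ref{supportproperty}), and the convergence property (\ref{convergenceproperty}) guarantee that the differentiated series converge normally on compact subsets of $N$, so the interchange is legitimate and the resulting $2$-form is smooth. Granting this, the term-by-term identity above holds, and combining it with the matching of the two leading terms establishes (\ref{omega3int}).
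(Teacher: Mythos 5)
Your proposal is correct and takes essentially the same approach as the paper's proof: match the semi-flat terms via the CASK identity $\frac{1}{4}\langle dZ\wedge d\overline{Z}\rangle=\frac{i}{2}\partial\overline{\partial}r^2$, then show that $d\bigl(\sum_{\gamma}\Omega(\gamma)\eta_{\gamma}^{\text{inst}}\bigr)$ reproduces the instanton terms of (\ref{invKF}) using the Bessel identity $(xK_1(x))'=-xK_0(x)$, with normal convergence (from the support and convergence properties) justifying the term-by-term differentiation. The only difference is one of presentation: the paper states this computation in a single line, while you carry out the $d\theta_\gamma$-, $dZ_\gamma$- and $d\overline{Z}_\gamma$-derivative bookkeeping explicitly.
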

\begin{remark} Notice that $\sum_{\gamma}\Omega(\gamma)\eta_{\gamma}^{\text{inst}}$ makes global sense due to the monodromy invariance of $\Omega$. Furthermore, $\sum_{\gamma}\Omega(\gamma)\eta_{\gamma}^{\text{inst}}\in \Omega^{1}(N)$ by the same arguments of Lemma \ref{mutuallylocalforms}.

\end{remark}
\begin{proof}
Since $M$ is a CASK manifold, we have from (\ref{globalKP}):

\begin{equation}\label{inst3}
    \omega_M=\frac{1}{4}\langle dZ\wedge d\overline{Z}\rangle=\frac{i}{2}\partial\overline{\partial} r^2 \,.
\end{equation}
 On the other hand,  using the identity $(xK_1(x))'=-xK_0(x)$ we see that the instanton correction terms of $\omega_3$ satisfy
\begin{equation}\label{etainst}
    d\Big(\sum_{\gamma}\Omega(\gamma)\eta_{\gamma}^{\text{inst}}\Big)=\sum_{\gamma}\left( \frac{i\Omega(\gamma)}{2}V^{\text{inst}}_{\gamma}dZ_{\gamma}\wedge d\overline{Z}_{\gamma}+\frac{\Omega(\gamma)}{2\pi } d\theta_{\gamma}\wedge A_{\gamma}^{\text{inst}}\right) \,,
\end{equation}
where we used that compact normal convergence (\ref{convergenceproperty}) lets us interchange sums and derivatives. The formula (\ref{omega3int}) then follows by comparison with (\ref{invKF}).
\end{proof}

\begin{corollary}
Let $\pi_N:P\to N$ and $\Theta$ be as in Proposition \ref{linebundle}. Then the connection $\eta \in \Omega^1(P)$ given by
\begin{equation}\label{etadef}
    \eta:=\Theta +2\pi \pi^*_N\Big(\frac{i}{4}\pi^*_M( \overline{\partial} r^2-\partial r^2) + \sum_{\gamma}\Omega(\gamma)\eta_{\gamma}^{\text{inst}}-\iota_Vg_N\Big)
\end{equation} 
has curvature 
\begin{equation}
    F=2\pi(\omega_3 -d(\iota_Vg_N)) \,.
\end{equation}

\end{corollary}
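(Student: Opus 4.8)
The plan is to exploit the fact that $\eta$ and $\Theta$ differ only by the pullback under $\pi_N$ of a $1$-form on the base $N$. Writing
\[
\beta := 2\pi\Big(\tfrac{i}{4}\pi_M^*(\overline{\partial}r^2 - \partial r^2) + \sum_{\gamma}\Omega(\gamma)\eta_{\gamma}^{\text{inst}} - \iota_V g_N\Big),
\]
so that $\eta = \Theta + \pi_N^*\beta$, the curvature is immediately $F = d\eta = d\Theta + d\beta$. (As elsewhere in the paper I suppress $\pi_N^*$ and $\pi_M^*$, identifying forms on $M$ and on $N$ with their pullbacks to $P$.) The point of this reduction is that no genuine difficulty can arise: although $\Theta$ is a connection on $P$ rather than a form on the base, adding the pullback of a base $1$-form changes the curvature by exactly the exterior derivative of that form, so the whole statement collapses to a computation of $d\beta$ on $N$.

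First I would differentiate the three summands of $\beta$ separately. For the first, the identification $\tfrac{i}{4}(\overline{\partial}r^2 - \partial r^2) = \tfrac14 d^c r^2$ together with $dd^c = 2i\,\partial\overline{\partial}$ gives
\[
d\Big(\tfrac{i}{4}(\overline{\partial}r^2 - \partial r^2)\Big) = \tfrac{i}{2}\,\partial\overline{\partial}r^2 = \omega_M,
\]
the last equality being the CASK K\"ahler potential identity (\ref{globalKP}) (see also (\ref{inst3})). For the second summand, the differential $d\big(\sum_{\gamma}\Omega(\gamma)\eta_{\gamma}^{\text{inst}}\big)$ is precisely the instanton correction terms of $\omega_3$, as already recorded in (\ref{etainst}); the term-by-term differentiation there is justified by the normal convergence established in the proof of Lemma \ref{mutuallylocalforms}. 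The third summand contributes $-d(\iota_V g_N)$ verbatim.

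Then I would combine these with the curvature of $\Theta$ supplied by Proposition \ref{linebundle}, namely $d\Theta = -\tfrac{1}{4\pi}\langle d\theta\wedge d\theta\rangle$. The only bookkeeping required is to observe that this matches the semi-flat angular term of $\omega_3$ after rescaling, since $-\tfrac{1}{4\pi}\langle d\theta\wedge d\theta\rangle = 2\pi\big(-\tfrac{1}{8\pi^2}\langle d\theta\wedge d\theta\rangle\big)$. Assembling everything yields
\[
F = -\tfrac{1}{4\pi}\langle d\theta\wedge d\theta\rangle + 2\pi\Big(\omega_M + d\big(\textstyle\sum_{\gamma}\Omega(\gamma)\eta_{\gamma}^{\text{inst}}\big) - d(\iota_V g_N)\Big) = 2\pi\big(\omega_3 - d(\iota_V g_N)\big),
\]
where the final equality is exactly the expression (\ref{omega3int}) for $\omega_3$ multiplied by $2\pi$. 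This completes the verification. The computation is entirely routine, and there is no substantive obstacle; the whole content lies in the two prior identities (\ref{globalKP}) and (\ref{etainst}) and in tracking the factors of $2\pi$ relating $d\Theta$ to the $\langle d\theta\wedge d\theta\rangle$ term of $\omega_3$.
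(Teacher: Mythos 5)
Your proposal is correct and is essentially the paper's own proof: the paper's argument is precisely that the curvature follows from (\ref{curvaturetheta}) and (\ref{omega3int}), and you have simply written out that computation, using (\ref{globalKP}) for the term $\frac{i}{2}\partial\overline{\partial}r^2=\omega_M$ and (\ref{etainst}) for the instanton terms, with the correct bookkeeping of the factor $2\pi$. No gaps; the only difference is that you make explicit the standard fact that adding the pullback of a base $1$-form to a principal connection shifts the curvature by the exterior derivative of that form.
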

\begin{proof}
The fact that $d\eta=\pi^*_NF$ follows from the equations (\ref{curvaturetheta}) and (\ref{omega3int}). 
\end{proof}

The following proposition characterizes the part of $\eta$ containing the ``instanton corrections".
\begin{proposition} The hyperholomorphic connection $\eta$ on $\pi_N:P\to N$ can be written as

\begin{equation}\label{eta}
    \eta= \Theta+2\pi\pi^*_N\Big(-\frac{1}{2}\pi^*_Mr^2\widetilde{\eta} +\eta^{\text{inst}}\Big)
\end{equation}
where $\widetilde{\eta}=\frac{1}{r^2}\iota_{J\xi}g_M$ and
\begin{equation}\label{etainstdef}
    \eta^{\text{inst}}:=\sum_{\gamma}\Omega(\gamma)\eta_{\gamma}^{\text{inst}}-\iota_V(g_N-\pi^*_Mg_{M}) \,.
\end{equation}
Furthermore, if $\Omega=0$ then $\eta^\text{inst}=0$.

\end{proposition}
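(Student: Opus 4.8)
The plan is to check that the right-hand side of (\ref{eta}) agrees with the definition (\ref{etadef}) of $\eta$, and then to dispose of the case $\Omega=0$ separately. Substituting (\ref{etainstdef}) into (\ref{eta}) and comparing with (\ref{etadef}), the terms $\sum_{\gamma}\Omega(\gamma)\eta_{\gamma}^{\text{inst}}$ and $-\iota_V g_N$ cancel on both sides, so that (\ref{eta}) is equivalent to the single identity of one-forms on $N$
\[
\frac{i}{4}\,\pi_M^*(\overline{\partial} r^2 - \partial r^2)\;=\;-\tfrac{1}{2}\,\pi_M^*(r^2\widetilde{\eta})+\iota_V\,\pi_M^* g_M .
\]
Verifying this identity is the whole content of the first assertion.

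First I would rewrite the contact form. From (\ref{contform}) together with $\log r=\tfrac12\log r^2$ one gets $\widetilde{\eta}=i(\overline{\partial}-\partial)\log r=\tfrac{i}{2r^2}(\overline{\partial}r^2-\partial r^2)$, hence $\iota_{J\xi}g_M=r^2\widetilde{\eta}=\tfrac{i}{2}(\overline{\partial}r^2-\partial r^2)$ as one-forms on $M$. Next I would use that $V$ is a lift of $J\xi$, i.e.\ $d\pi_M(V)=J\xi$ by (\ref{lift}); since $\pi_M^*g_M$ pairs tangent vectors only through $d\pi_M$, this gives $\iota_V\,\pi_M^*g_M=\pi_M^*(\iota_{J\xi}g_M)=\pi_M^*(r^2\widetilde{\eta})$. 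Plugging both facts into the right-hand side of the reduced identity yields $-\tfrac12\pi_M^*(r^2\widetilde{\eta})+\pi_M^*(r^2\widetilde{\eta})=\tfrac12\pi_M^*(r^2\widetilde{\eta})=\tfrac{i}{4}\pi_M^*(\overline{\partial}r^2-\partial r^2)$, which is exactly the left-hand side. This establishes (\ref{eta}).

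For the final claim, suppose $\Omega=0$. Then the sum $\sum_{\gamma}\Omega(\gamma)\eta_{\gamma}^{\text{inst}}$ is empty and all instanton corrections vanish, so $g_N=g^{\text{sf}}$; by (\ref{etainstdef}) it therefore suffices to show $\iota_V(g^{\text{sf}}-\pi_M^* g_M)=0$. Here I would invoke the semi-flat expression from the remark following Corollary \ref{HKmetriccoord}, namely $g^{\text{sf}}=dZ_{\gamma^i}N_{ij}d\overline{Z}_{\gamma^j}+\tfrac{1}{4\pi^2}W_iN^{ij}\overline{W}_j$, whose horizontal block $dZ_{\gamma^i}N_{ij}d\overline{Z}_{\gamma^j}$ is precisely $\pi_M^*g_M$ (the submersion $\pi_M:(N,g^{\text{sf}})\to(M,g_M)$ being Riemannian). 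Thus $g^{\text{sf}}-\pi_M^*g_M=\tfrac{1}{4\pi^2}W_iN^{ij}\overline{W}_j$ is assembled entirely from the fiber one-forms $W_i=d\theta_{\widetilde{\gamma}_i}-\tau_{ij}d\theta_{\gamma^j}$. Since $V$ annihilates all angle coordinates, $\iota_V d\theta_\gamma=V(\theta_\gamma)=0$ (the property used in Proposition \ref{rotatingprop}), we obtain $\iota_V W_i=0$ and hence $\iota_V(g^{\text{sf}}-\pi_M^*g_M)=0$, so $\eta^{\text{inst}}=0$.

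The argument is essentially routine once the contact-form identity $r^2\widetilde{\eta}=\tfrac{i}{2}(\overline{\partial}r^2-\partial r^2)$ and the $\pi_M$-relatedness $d\pi_M(V)=J\xi$ are in place. The only points requiring care are the sign bookkeeping in the reduced identity and confirming that the horizontal block of $g^{\text{sf}}$ really coincides with $\pi_M^*g_M$, so that the difference $g^{\text{sf}}-\pi_M^*g_M$ is purely vertical; this last identification is where any error would most likely creep in.
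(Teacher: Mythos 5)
Your proposal is correct and follows essentially the same route as the paper: the identity (\ref{eta}) is reduced, after cancelling $\sum_{\gamma}\Omega(\gamma)\eta_{\gamma}^{\text{inst}}$ and $\iota_V g_N$, to the contact-form identity $r^2\widetilde{\eta}=\iota_{J\xi}g_M=\frac{i}{2}(\overline{\partial}r^2-\partial r^2)$ combined with $\iota_V\pi_M^*g_M=\pi_M^*(\iota_{J\xi}g_M)$, exactly as in the paper. Your treatment of the case $\Omega=0$ likewise coincides with the paper's: both reduce to $\eta^{\text{inst}}=-\iota_V\bigl(\tfrac{1}{4\pi^2}W_iN^{ij}\overline{W}_j\bigr)=0$ via $\iota_V W_i=0$, the only cosmetic difference being that you quote the semi-flat expression for $g^{\text{sf}}$ directly while the paper specializes the general formula of Corollary \ref{HKmetriccoord} (with $Y_i=W_i$, $M_{ij}=N_{ij}$ when $\Omega=0$).
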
 
\begin{proof}
We first show that $\Omega=0$ implies $\eta^{\text{inst}}=0$. In a local Darboux frame $(\widetilde{\gamma}_i,\gamma^i)$ of $\Lambda$ with $\text{Supp}(\Omega)\subset \text{span} \{\gamma^i\}$, we can write (recall Corollary \ref{HKmetriccoord})
\begin{equation}\label{etainstcoord}
    \eta^{\text{inst}}=\sum_{\gamma}\Omega(\gamma)\eta_{\gamma}^{\text{inst}}-\iota_V\Big( \sum_{\gamma}\Omega(\gamma)V^{\text{inst}}_{\gamma}|dZ_{\gamma}|^2+\frac{1}{4\pi^2}Y_iM^{ij} \overline{Y}_j\Big) \,.
\end{equation}
If $\Omega=0$, it follows that $Y_i=W_i+W_i^{\text{inst}}=W_i$, and $M_{ij}=N_{ij}+N_{ij}^{\text{inst}}=N_{ij}$, so $\iota_VW_i=0$ implies that
\begin{equation}
    \eta^{\text{inst}}=-\iota_V\Big(\frac{1}{4\pi^2}W_iN^{ij}\overline{W_j}\Big)=0 \,.
\end{equation}

To show the remaining identity, we notice that from (\ref{etadef}) and (\ref{etainstdef}) we can write

\begin{equation}
    \eta= \Theta +2\pi\pi^*_N\Big(\frac{i}{4}\pi^*_M( \overline{\partial} r^2-\partial r^2)  -\iota_V\pi^*_Mg_M +\eta^{\text{inst}}\Big)=\Theta +2\pi\pi^*_N\Big(\pi^*_M\Big(\frac{i}{4}( \overline{\partial} r^2-\partial r^2)  -\iota_{J\xi}g_M\Big) +\eta^{\text{inst}}\Big)\,.
\end{equation}

Furthermore, recalling (\ref{contform}):
\begin{equation}
    r^2\widetilde{\eta}=\iota_{J\xi}g_M=\frac{r^2}{2}d^c\log(r^2)=\frac{i}{2}(\overline{\partial}r^2-\partial r^2)
\end{equation}
we obtain (\ref{eta}).
\end{proof}
\end{subsection}
\subsection{HK-QK correspondence}\label{HK-QK}

From the data of an integral CASK manifold $(M,g_M,\omega_M,\nabla,\xi,\Lambda)$ with a compatible $\Omega$, we have obtained a pseudo-HK manifold $(N,g_N,I_1,I_2,I_3)$ with an infinitesimal rotating circle action $V$ and a hyperholomorphic circle bundle $(\pi_{N}:P\to N,\eta)$. We now wish to apply \cite[Theorem 2]{QKPSK} to obtain a pseudo-QK manifold. In order to match their conventions, we will need to consider $2\pi g_N$ instead of $g_N$, and take $X=2V$, so that $X$ satisfies $\mathcal{L}_XI_1=-2I_2$, $\mathcal{L}_XI_2=2I_1$, $\mathcal{L}_XI_3=0$, $\mathcal{L}_Xg_N=0$. On the other hand, in order to directly apply \cite[Theorem 2]{QKPSK}, we will need to define several other pieces of data associated to $(N,2\pi g_N,I_1,I_2,I_3)$ and $(\pi_N:P\to N,\eta)$. This are given in Lemmas \ref{ff1} and \ref{Nrestriction}, and in Definition \ref{thetadefs} below.

\begin{lemma} \label{ff1} Let $c\in \mathbb{R}$ and let $f,f_1\in C^{\infty}(N)$ be defined by 

\begin{equation}\label{defff1}
    f:= 2\pi r^2-c + 4\pi \sum_{\gamma}\Omega(\gamma)\iota_V\eta_{\gamma}^{\text{inst}}, \;\;\;\;\;\; f_1:=-2\pi r^2 -c +4\pi \iota_V\eta^{\text{inst}}\,.
\end{equation}
Then $f$ and $f_1$ satisfy
\begin{equation}\label{condff1}
    df=-\iota_X(2\pi \omega_3)=-4\pi \iota_V\omega_3, \;\;\;\;\;\; f_1=f-\frac{1}{2}(2\pi g_N(X,X))=f-4\pi g_N(V,V)\,.
\end{equation}
Furthermore, if 
\begin{equation}\label{ff1inst}
    f^{\text{inst}}:= 4\pi \sum_{\gamma}\Omega(\gamma)\iota_V\eta_{\gamma}^{\text{inst}}\;\;\;\;\;\; f_1^{\text{inst}}:=4\pi \iota_V\eta^{\text{inst}}\,,
\end{equation}
then $\Omega=0$ implies $f^{\text{inst}}=f_1^{\text{inst}}=0$.
\end{lemma}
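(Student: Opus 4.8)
The plan is to verify the two stated identities and the vanishing assertion separately, in each case exploiting the explicit formula (\ref{omega3int}) for $\omega_3$ together with the transformation rules under $V$ already recorded in the proof of Proposition \ref{rotatingprop}: $\mathcal{L}_V Z_\gamma = iZ_\gamma$, $\mathcal{L}_V\theta_\gamma = 0$, and hence $\mathcal{L}_V V_\gamma^{\text{inst}} = \mathcal{L}_V A_\gamma^{\text{inst}} = 0$.

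For $df = -4\pi\iota_V\omega_3$ I would contract (\ref{omega3int}) with $V$ term by term. The first summand $\frac{i}{2}\partial\overline{\partial}r^2 = \omega_M$ is a pullback from $M$, and $V$ projects to $J\xi$, so $\iota_V\omega_M = \pi_M^*(\iota_{J\xi}\omega_M)$; using $\omega_M = g_M(J-,-)$ and the CASK identity $dr^2 = 2\iota_\xi g_M$ (which comes from $D\xi = \mathrm{Id}$) one finds $\iota_{J\xi}\omega_M = -\iota_\xi g_M = -\tfrac{1}{2}dr^2$. The second summand $-\frac{1}{8\pi^2}\langle d\theta\wedge d\theta\rangle$ is killed by $\iota_V$ since $\iota_V d\theta_\gamma = \mathcal{L}_V\theta_\gamma = 0$. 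For the third summand, set $\alpha := \sum_\gamma\Omega(\gamma)\eta_\gamma^{\text{inst}}$; one checks $\mathcal{L}_V\alpha = 0$ because $\theta_\gamma$ and $|Z_\gamma|$ are $V$-invariant and $\mathcal{L}_V\big(\frac{dZ_\gamma}{Z_\gamma}-\frac{d\overline{Z}_\gamma}{\overline{Z}_\gamma}\big)=0$ (indeed $\mathcal{L}_V\frac{dZ_\gamma}{Z_\gamma} = \frac{i\,dZ_\gamma}{Z_\gamma}-\frac{i\,dZ_\gamma}{Z_\gamma}=0$), so that each $\eta_\gamma^{\text{inst}}$ is invariant. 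Then Cartan's formula with $d\omega_3 = 0$ gives $\iota_V d\alpha = \mathcal{L}_V\alpha - d\iota_V\alpha = -d\iota_V\alpha$. Assembling the three contributions yields $\iota_V\omega_3 = -\tfrac{1}{2}dr^2 - d\big(\sum_\gamma\Omega(\gamma)\iota_V\eta_\gamma^{\text{inst}}\big)$, and multiplying by $-4\pi$ reproduces $df$ from (\ref{defff1}); since $X = 2V$ this equals $-\iota_X(2\pi\omega_3)$ as well.

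For $f_1 = f - 4\pi g_N(V,V)$ I would substitute the definition (\ref{etainstdef}) of $\eta^{\text{inst}}$ into $f_1$ and compute $f - f_1$ directly. The one nontrivial input is $\iota_V\iota_V\pi_M^*g_M = (\pi_M^*g_M)(V,V) = g_M(J\xi,J\xi) = g_M(\xi,\xi) = r^2$, using $J$-invariance of $g_M$ and that $V$ projects to $J\xi$. This gives $\iota_V\eta^{\text{inst}} = \sum_\gamma\Omega(\gamma)\iota_V\eta_\gamma^{\text{inst}} - g_N(V,V) + r^2$, whereupon all instanton terms and all $r^2$ terms cancel in $f - f_1$, leaving $f - f_1 = 4\pi g_N(V,V)$; as $\tfrac{1}{2}(2\pi g_N(X,X)) = 4\pi g_N(V,V)$, this is the assertion.

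The last statement is immediate: $f^{\text{inst}}$ is a sum over $\gamma\in\mathrm{Supp}(\Omega)$ and so vanishes when $\Omega = 0$, while $f_1^{\text{inst}} = 4\pi\iota_V\eta^{\text{inst}}$ vanishes because the preceding proposition already shows $\eta^{\text{inst}} = 0$ when $\Omega = 0$. No step is genuinely hard; the only real content lies in the computation of $\iota_V\omega_3$, where the CASK structure enters through $dr^2 = 2\iota_\xi g_M$ and the sign $\iota_{J\xi}\omega_M = -\tfrac{1}{2}dr^2$, together with the invariance $\mathcal{L}_V\eta_\gamma^{\text{inst}} = 0$. These, plus careful bookkeeping of the pullbacks $\pi_M^*$ and the factor $X = 2V$, are the points to handle with care.
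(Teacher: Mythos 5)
Your proof is correct and follows essentially the same route as the paper's: a term-by-term contraction of (\ref{omega3int}) with $V$ using $\mathcal{L}_V\eta_\gamma^{\text{inst}}=0$ and Cartan's formula for the $df$ identity, then the $f_1$ identity by substituting (\ref{etainstdef}) and using $\pi_M^*g_M(V,V)=g_M(J\xi,J\xi)=r^2$, and the same vanishing argument for $\Omega=0$. The only differences are minor and harmless: you derive the key identity $\iota_V\bigl(\tfrac{i}{2}\partial\overline{\partial}r^2\bigr)=-\tfrac{1}{2}dr^2$ intrinsically from $D\xi=\mathrm{Id}$ (via $dr^2=2\iota_\xi g_M$ and $\iota_{J\xi}\omega_M=-\iota_\xi g_M$) where the paper computes it in conical special holomorphic coordinates using $Z^i\partial\tau_{ij}/\partial Z^k=0$, and your invocation of $d\omega_3=0$ when applying Cartan's formula is superfluous, since $\iota_Vd\alpha=-d\iota_V\alpha$ needs only $\mathcal{L}_V\alpha=0$.
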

\begin{proof}
We start by proving that $df=-4\pi \iota_V\omega_3$. Using conical holomorphic special coordinates $Z^i$ we have the local expression 
    \begin{equation}\label{coordV}
        V= iZ^i\partial_{Z^i}-i\overline{Z}^i\partial_{\overline{Z}^i} \;\;\;\;\; r^2=N_{ij}Z^i\overline{Z}^j=\text{Im}(\tau_{ij})Z^i\overline{Z}^j\,.
    \end{equation}
On the other hand we have the identity $ Z^i\frac{\partial \tau_{ij}}{\partial Z^k}=0$ as a consequence of the CASK condition. It is then easy to check that 
\begin{equation}
    \frac{i}{2}\iota_V(\partial \overline{\partial}r^2)=-\frac{1}{2}d(r^2)\,.
\end{equation}
From the same arguments of Proposition \ref{rotatingprop}, it follows that $\mathcal{L}_{V}\eta_{\gamma}^{\text{inst}}=0$. We then have

\begin{equation}
        df=2\pi d(r^2) +4\pi \sum_{\gamma}\Omega(\gamma)d(\iota_V\eta_{\gamma}^{\text{inst}})=-4\pi\Big(\frac{i}{2}\iota_V(\partial \overline{\partial}r^2)+\iota_Vd\Big(\sum_{\gamma}\Omega(\gamma)\eta_{\gamma}^{\text{inst}}\Big)\Big)=-4\pi \iota_V\omega_3 \,.
\end{equation}

The remaining equation in (\ref{condff1}) for $f_1$ follows easily from $\pi^*_Mg_M(V,V)=g_M(J\xi,J\xi)=g_M(\xi,\xi)=r^2$ and the definition of $\eta^{\text{inst}}$ in (\ref{etainstdef}).\\

Finally, if $\Omega=0$ we clearly have $f^{\text{inst}}=0$, while $f_1^{\text{inst}}=0$ follows from the fact that $\Omega=0$ implies $g_N(V,V)=g^{\text{sf}}(V,V)=\pi^*_Mg_M(V,V)$.
\end{proof}

\begin{lemma}\label{Nrestriction}
Assume that the flow of $\xi$ induces a free-action on $M$ of the monoid $\mathbb{R}_{\geq 1}$. Then the open subset of $N$ defined by

\begin{equation}\label{N'}
    N':=\{ p \in N \;\; | \;\; f(p)\neq 0, \;\; f_1(p)\neq 0, \;\; g_N(X_p,X_p)\neq 0 \} \subset N
\end{equation} is not empty. In particular, the open subset $N_+':=\{ f>0 ,\;\;  f_1<0 \}\subset N'$ is not empty.
\end{lemma}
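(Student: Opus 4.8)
The plan is to produce a single point lying in $N'_+$, which at once shows $N'\neq\emptyset$ because $N'_+\subset N'$: on the locus $\{f>0,\,f_1<0\}$ the identity $g_N(X,X)=4g_N(V,V)=\tfrac1\pi(f-f_1)$ from (\ref{condff1}) gives $g_N(X_p,X_p)=\tfrac1\pi(f(p)-f_1(p))>0$, so all three defining conditions of $N'$ in (\ref{N'}) hold automatically on $N'_+$. Thus it suffices to exhibit one point where $f>0$ and $f_1<0$.

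To do this I would escape to infinity along the Euler field. Fix $p_0\in N$ and let $\Phi_s$ ($s\ge 0$) be the flow of the $\nabla$-horizontal lift $\tilde\xi$ of $\xi$ to $N$, exactly as used in the proof of Proposition \ref{signatureHK}. The hypothesis that $\xi$ generates a free $\mathbb{R}_{\ge 1}$-action on $M$ guarantees that this lift is defined for all $s\ge 0$ on $\pi^{-1}(l_{\pi(p_0)})$, that $Z_\gamma\mapsto e^{s}Z_\gamma$, and that $\theta_\gamma$ is preserved. Since $\mathcal{L}_\xi r^2 = 2r^2$ and $r^2=g_M(\xi,\xi)>0$ (by the CASK positivity of $g_M$ on $\mathcal{D}=\mathrm{span}\{\xi,J\xi\}$), we get $r^2(\Phi_s(p_0))=e^{2s}r^2(p_0)\to\infty$; likewise $|Z_\gamma|\to\infty$ for every $\gamma\in\mathrm{Supp}(\Omega)$, the latter being nonzero at $p_0$ by the support property (\ref{supportproperty}).

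The heart of the argument is that the instanton contributions stay bounded, in fact decay to $0$, along this ray. Using $\mathcal{L}_VZ_\gamma=iZ_\gamma$ and $\mathcal{L}_V\theta_\gamma=0$ one computes $\iota_V\eta_\gamma^{\text{inst}}=-\tfrac1{4\pi^2}\sum_{n>0}\tfrac{e^{in\theta_\gamma}}{n}|Z_\gamma|K_1(2\pi n|Z_\gamma|)$, and from Corollary \ref{HKmetriccoord} one finds that $g_N(V,V)-r^2$ equals $\sum_\gamma\Omega(\gamma)V_\gamma^{\text{inst}}|Z_\gamma|^2$ plus the super-exponentially small vertical piece $\tfrac1{4\pi^2}(\iota_VY_i)M^{ij}(\iota_V\overline Y_j)$, since $\iota_VW_i=0$ forces $\iota_VY_i=\iota_VW_i^{\text{inst}}$ to be exponentially small. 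By the Bessel asymptotics $K_\nu(x)\sim\sqrt{\pi/2x}\,e^{-x}$, each summand is dominated by $C\,|Z_\gamma|\,e^{-2\pi(1-\epsilon)|Z_\gamma|}$ for large $|Z_\gamma|$, so the strengthened convergence property (\ref{convergenceproperty}) lets me bound the $\gamma$-sums uniformly and conclude $f^{\text{inst}}(\Phi_s(p_0))\to 0$ and $f_1^{\text{inst}}(\Phi_s(p_0))\to 0$ as $s\to\infty$.

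Combining the two preceding paragraphs, $f=2\pi r^2-c+f^{\text{inst}}\to+\infty$ and $f_1=-2\pi r^2-c+f_1^{\text{inst}}\to-\infty$ along the ray, so for $s$ large enough $\Phi_s(p_0)\in N'_+$, which proves $N'_+\neq\emptyset$ and hence $N'\neq\emptyset$. I expect the only delicate point to be the uniform decay of the instanton sums along the escaping ray; this is precisely where the strengthened convergence property of Definition \ref{defVarBPS} is used, as it ensures the exponentially weighted sums over $\mathrm{Supp}(\Omega)$ converge normally on compacta and can therefore be controlled simultaneously for all large $s$. Everything else is the elementary observation that the semi-flat quantities $\pm 2\pi r^2$ dominate as $r^2\to\infty$.
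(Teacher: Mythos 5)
Your proof is correct and follows essentially the same route as the paper: flow along the $\nabla$-horizontal lift of $\xi$ to escape to infinity, use the Bessel asymptotics together with the convergence property (\ref{convergenceproperty}) to make $f^{\text{inst}}$ and $f_1^{\text{inst}}$ decay, and let the semi-flat terms $\pm 2\pi r^2$ dominate. Your only (minor) streamlining is deducing $g_N(X,X)>0$ on $\{f>0,\ f_1<0\}$ from the identity $g_N(X,X)=\tfrac{1}{\pi}(f-f_1)$ of (\ref{condff1}), whereas the paper verifies the sign of $g_N(X,X)$ directly at the limiting point using that $g_N(X,X)-\pi_M^*g_M(X,X)$ consists only of instanton terms.
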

\begin{proof}
The proof is similar to Proposition \ref{signatureHK}. Indeed, fix $p\in N$ with $\pi_M(p)=q$, consider the ray $l_q:=\mathbb{R}_{\geq 1}\cdot q \subset M$, and pick some Darboux frame $(\widetilde{\gamma}_i,\gamma^i)$ of $\Lambda$ along the ray $l_q$ with $\text{Supp}(\Omega)\subset \text{span}\{\gamma^i\}$. By consider the $\nabla$-flat lift of $\xi$ to $N$, we obtain a $\mathbb{R}_{\geq 1}$-action on $\pi^{-1}_M(l_q)\subset N$ that rescales $Z_{\gamma}$, and leaves invariant $\theta_{\gamma}$. Writing $f^{\text{inst}}$, $f_1^{\text{inst}}$ with respect to $(\widetilde{\gamma}_i,\gamma^i)$ and using the asymptotics of the Bessel functions together with the convergence property, it is easy to see that given any $\epsilon>0$ the following holds for sufficiently big $t \in \mathbb{R}_{\geq 1}$
\begin{equation}
    |f^{\text{inst}}(t\cdot p)|<\epsilon, \;\;\;\;\;\;  |f^{\text{inst}}(t\cdot p)-f_1^{\text{inst}}(t\cdot p)|=\pi |g_N(X_{t \cdot p},X_{t \cdot p})-\pi^*_Mg_M(X_{t \cdot p},X_{t \cdot p})|<\epsilon
\end{equation}
where in the last inequality we used the fact that $g_N(X,X)-\pi^*_Mg_M(X,X)$ only contains instanton correction terms (see (\ref{HKmetric}) and (\ref{coordV})). \\

In particular, for sufficiently big $t$ we have
\begin{equation}
    \begin{split}
    \text{sign}(f(t\cdot p))=\text{sign}(2\pi t^2r^2(q)-c)=1,& \;\;\; \text{sign}(f_1(t\cdot p))=\text{sign}(-2\pi t^2r^2(q)-c)=-1\\
    \text{sign}(g_N(X(t\cdot p)&,X(t\cdot p)))=\text{sign}(t^2r^2(q))=1\\
    \end{split}
\end{equation} 
so $N'$ contains a point $t\cdot p$ where $f>0$, $f_1<0$ and $g_N(X,X)>0$.
\end{proof}

\begin{definition}\label{thetadefs}
     Let $N'\subset N$ be as in Lemma \ref{Nrestriction}. On the total space of  $(\pi_N:P|_{N'} \to N',\eta)$ we define the following objects:
    
    \begin{itemize}
        \item We  endow $P|_{N'}$ with the pseudo-Riemannian metric:

    \begin{equation}
        g_P:=\frac{2}{f_1}\eta^2 +2\pi \pi^*_Ng_N
    \end{equation}
    and the vector field
    \begin{equation}
        X_1^P:=\widetilde{X}+f_1\partial_{\sigma}\,,
    \end{equation}
    where $\widetilde{X}$ denotes the horizontal lift of $X$ and $\partial_{\sigma}$ the generator of the $S^1$-principal action.
    \item Finally, we define the following $1$-forms on $P|_{N'}$:
\begin{equation}\label{deftheta}
        \theta_0^P:=-\frac{1}{2}\pi^*_Ndf,\;\;\;\; 
        \theta_1^P:=\eta+\frac{1}{2}\pi^*_N\iota_X(2\pi g_N),\;\;\;\;
        \theta_2^P:=\frac{1}{2}\pi^*_N\iota_X(2\pi\omega_2),\;\;\;\;
        \theta_3^P:=-\frac{1}{2}\pi^*_N\iota_X(2\pi \omega_1)\,.
\end{equation}
    \end{itemize}
    
\end{definition}

We can now state the main theorem of this section:
\begin{theorem}\label{theorem2} Let  $(M,g_M,\omega_M,\nabla,\xi,\Lambda)$ be a connected integral CASK manifold with a compatible mutually local $\Omega$, and assume that the flow of $\xi$ generates a free-action of the monoid $\mathbb{R}_{\geq 1}$. Furthermore, let $(N,g_N,I_1,I_2,I_3)$ be the associated pseudo-HK manifold; $(P\to N,\eta)$ the associated hyperholomorphic circle bundle; and $\theta_i^P$, $g_P$, $X_1^P$, $f$ and $N'\subset N$ as before. If $\overline{N}\subset P|_{N'}$ is any submanifold transversal to $X_1^P$, then 

\begin{equation}\label{QKmetricglobal}
    g_{\overline{N}}:=-\frac{1}{f}\Big(g_P - \frac{2}{f}\sum_{i=0}^3 (\theta_i^P)^2\Big)\Big|_{\overline{N}}
\end{equation}
is a pseudo-QK metric on $\overline{N}$. Furthermore, if $\overline{N}$ is picked to also satisfy $\overline{N}_+:=\overline{N}\cap N_+'\neq \emptyset$, then $g_{\overline{N}}$ is positive definite on $\overline{N}_+$.
\end{theorem}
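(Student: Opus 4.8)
The plan is to derive the first assertion as a direct application of the explicit HK-QK correspondence in \cite[Theorem 2]{QKPSK}, and to establish the positive-definiteness statement by a pointwise signature computation that exploits the quaternionic structure together with the sign conditions $f>0$, $f_1<0$ cutting out $\overline{N}_+$.

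\textbf{Pseudo-QK property.} First I would verify that the data assembled in the preceding lemmas is exactly the input required by \cite[Theorem 2]{QKPSK}. By Theorem \ref{theorem1} the triple $(N,g_N,I_1,I_2,I_3)$ is pseudo-HK; passing to $2\pi g_N$ and setting $X=2V$ turns the infinitesimal rotating action of Proposition \ref{rotatingprop} into the normalization $\mathcal{L}_XI_1=-2I_2$, $\mathcal{L}_XI_2=2I_1$, $\mathcal{L}_XI_3=0$ used in loc.\ cit. Lemma \ref{ff1} supplies $f,f_1$ with $df=-\iota_X(2\pi\omega_3)$ and $f_1=f-\tfrac12\bigl(2\pi g_N(X,X)\bigr)$; Section \ref{HHbundle} supplies the hyperholomorphic bundle $(P\to N,\eta)$ with curvature $2\pi(\omega_3-d\iota_Vg_N)$; and Lemma \ref{Nrestriction} guarantees that $N'$, where $f$, $f_1$ and $g_N(X,X)$ are nonzero, is nonempty, so that $g_P$, $X_1^P$ and the $\theta_i^P$ of Definition \ref{thetadefs} are defined on $P|_{N'}$. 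Since $\overline{N}$ is transversal to $X_1^P$, the hypotheses of \cite[Theorem 2]{QKPSK} hold verbatim and the tensor (\ref{QKmetricglobal}) is precisely the metric produced there; hence $g_{\overline{N}}$ is pseudo-QK.

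\textbf{Positive definiteness on $\overline{N}_+$.} The signature is a pointwise property, so it suffices to analyze the symmetric tensor $\tilde g:=g_P-\tfrac{2}{f}\sum_i(\theta_i^P)^2$ on $T_pP$ and then restrict it by $-\tfrac1f$. On $\overline{N}_+$ one has $f>0$, $f_1<0$, and from Lemma \ref{ff1} also $2\pi g_N(X,X)=2(f-f_1)>0$, so $X$ is spacelike. I would first record the structural fact that the distribution $\mathcal{Q}:=\mathrm{span}\{X,I_1X,I_2X,I_3X\}$ carries a diagonal Gram matrix with all four entries equal to $g_N(X,X)>0$, using $g_N(X,I_\alpha X)=\omega_\alpha(X,X)=0$ and $g_N(I_\alpha X,I_\beta X)=0$ for $\alpha\neq\beta$; hence $g_N|_{\mathcal Q}$ is positive definite. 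Since Proposition \ref{signatureHK} gives $\sigma(g_N)=(4,4n)$ on all of $N$, the orthogonal complement $\mathcal{Q}^\perp$ must then be negative definite of rank $4n$. On the horizontal lifts of $\mathcal{Q}^\perp$ all four $\theta_i^P$ and the connection form $\eta$ vanish, so there $\tilde g=2\pi g_N|_{\mathcal Q^\perp}$ is negative definite and $g_{\overline N}=-\tfrac{2\pi}{f}g_N|_{\mathcal Q^\perp}$ is positive definite.

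\textbf{The quaternionic block and the main obstacle.} It remains to treat the five-dimensional space $\mathcal{V}=\mathrm{span}\{\partial_\sigma,\widetilde X,\widetilde{I_1X},\widetilde{I_2X},\widetilde{I_3X}\}$, where $\widetilde{\phantom{X}}$ is the horizontal lift. Here I would compute the Gram matrices of $g_P$ and of $\sum_i(\theta_i^P)^2$ directly from Definition \ref{thetadefs}, using $\eta(\partial_\sigma)=1$ and the fact that, away from the fiber term in $\theta_1^P$, the $\theta_i^P$ are the $(2\pi g_N)$-duals of $I_3X,\,X,\,I_2X,\,-I_1X$. The delicate point I anticipate is that the vector $X_1^P=\widetilde X+f_1\partial_\sigma$ spans the radical of $\tilde g|_{\mathcal V}$: the $(\partial_\sigma,\widetilde X)$-block of $\tilde g$ equals $\tfrac{2(f-f_1)}{f}\bigl(\begin{smallmatrix}1/f_1 & -1\\ -1 & f_1\end{smallmatrix}\bigr)$, whose inner matrix has determinant $1-1=0$ precisely because of the relation $2\pi g_N(X,X)=2(f-f_1)$, with kernel $(f_1,1)=X_1^P$. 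On the transversal quotient $\mathcal V/\mathbb{R}X_1^P$ the form $\tilde g$ is then negative definite, the three decoupled $I_\alpha X$-directions contributing $2(f-f_1)\tfrac{f_1}{f}<0$ and the surviving $(\partial_\sigma,\widetilde X)$-direction contributing the nonzero eigenvalue $\tfrac{2(f-f_1)}{f}\bigl(f_1+\tfrac1{f_1}\bigr)<0$. Multiplying by $-\tfrac1f<0$ makes all of these positive, so $g_{\overline N}$ is positive definite on the whole slice $\overline N_+$. The entire argument rests on the exact constant $2\pi g_N(X,X)=2(f-f_1)$ and on the sign pattern $f>0$, $f_1<0$; any other sign choice would alter the resulting signature. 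As a consistency check I would note that setting $\Omega=0$ reduces $g_{\overline N}$ to the $1$-loop corrected Ferrara-Sabharwal metric, which is positive definite on the corresponding region, in agreement with the signature computed here.
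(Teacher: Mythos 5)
Your proposal is correct, and it splits naturally into a half that coincides with the paper and a half that genuinely differs. For the pseudo-QK property you do exactly what the paper does: verify that Theorem \ref{theorem1}, Proposition \ref{rotatingprop}, Lemma \ref{ff1}, Lemma \ref{Nrestriction} and Definition \ref{thetadefs} assemble precisely the input of \cite[Theorem 2]{QKPSK} for $(N',2\pi g_N,I_1,I_2,I_3)$, $X=2V$ and $(P|_{N'},\eta)$, and then quote that theorem. One small imprecision: the tensor (\ref{QKmetricglobal}) is not literally the metric produced there but differs from it by the factor $-2\,\mathrm{sign}(f)$ (the paper first obtains $g'=\frac{1}{2|f|}\bigl(g_P-\frac{2}{f}\sum_i(\theta_i^P)^2\bigr)\big|_{\overline{N}}$ and then rescales), so strictly one must add the remark that a nonzero constant multiple of a pseudo-QK metric is again pseudo-QK. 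Where you diverge is the positive-definiteness claim: the paper settles it in one line by citing \cite[Corollary 1]{Conification}, with input $\sigma(g_N)=(4,4n)$ from Proposition \ref{signatureHK} and the signs $f>0$, $f_1<0$, whereas you in effect reprove that corollary by a pointwise computation. Your computation checks out: the $\tilde g$-orthogonal splitting of $T_pP$ into the horizontal lift of $\mathcal{Q}^\perp$ (where all $\theta_i^P$ and $\eta$ vanish, so $\tilde g=2\pi g_N|_{\mathcal{Q}^\perp}$ is negative definite because the quaternionic span $\mathcal{Q}$ of the spacelike vector $X$ exhausts the positive index $4$) and the five-dimensional block $\mathcal{V}$; on the latter, the Gram matrix $\frac{2(f-f_1)}{f}\bigl(\begin{smallmatrix}1/f_1 & -1\\ -1 & f_1\end{smallmatrix}\bigr)$ on the $(\partial_\sigma,\widetilde X)$-plane, with kernel $\mathbb{R}X_1^P$ and nonzero eigenvalue proportional to $f_1+1/f_1<0$, and the diagonal entries $2(f-f_1)f_1/f<0$ in the $\widetilde{I_\alpha X}$-directions, agree with what one gets from Definition \ref{thetadefs}, Lemma \ref{ff1} and the identity $2\pi g_N(X,X)=2(f-f_1)$. (Your parenthetical that the $\theta_i^P$ are the $(2\pi g_N)$-duals of $I_3X,X,I_2X,-I_1X$ is off by a factor $\frac{1}{2}$ — they are the $\pi g_N$-duals — but the Gram matrices you actually use are the correct ones.) What your route buys is a self-contained argument that makes explicit why transversality to $X_1^P$ is the right hypothesis (it spans the radical of the degenerate tensor $\tilde g$, so any transversal slice inherits the quotient signature) and exactly where the sign conditions defining $N_+'$ enter; what the paper's route buys is brevity, outsourcing the same linear algebra to \cite[Corollary 1]{Conification}.
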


\begin{proof}
We have set everything up so that we can apply the explicit formulas of the HK-QK correspondence in \cite[Theorem 2]{QKPSK} to the pseudo-HK manifold $(N',2\pi g_N,I_1,I_2,I_3)$ with the vector field $X=2V$ and the hyperholomorphic circle bundle $(P|_{N'}\to N',\eta)$ (notice that we restricted to $N'$ of Lemma \ref{Nrestriction}). \cite[Theorem 2]{QKPSK} then guarantees that 
\begin{equation}
    g':=\frac{1}{2|f|}\Big(g_P - \frac{2}{f}\sum_{i=0}^3 (\theta_i^P)^2\Big)\Big|_{\overline{N}}
\end{equation}
is a pseudo-QK metric on $\overline{N}$. Hence, $g_{\overline{N}}=-2\text{sign}(f)g'$ is also a pseudo-QK metric on $\overline{N}$. \\

By Proposition \ref{signatureHK} we know that $g_N$ has signature $(4,4n)$, so by  \cite[Corollary 1]{Conification} we conclude from the conditions $f>0$, $f_1<0$  that $g'=-\frac{\text{sign}(f)}{2}g_{\overline{N}}=-\frac{1}{2}g_{\overline{N}}$ is negative definite on $\overline{N}_+$, and hence $g_{\overline{N}}$ is positive definite on $\overline{N}_+$.
\end{proof}

\begin{remark} Let us make a few comments on the assumption in Theorem \ref{theorem2} that the flow of $\xi$ generates a free-action of the monoid $\mathbb{R}_{\geq 1}$:

\begin{itemize}
    \item On one hand, this assumption is satisfied if one starts with a CASK domain (see Definition \ref{CASKdomdef}), since in this case $M\subset \mathbb{C}^{n+1}-\{0\}$ is a $\mathbb{C}^{\times}$-invariant domain, and $\xi$ corresponds to the infinitesimal action of the rescaling action $\mathbb{R}_{>0}\subset \mathbb{C}^{\times}$ (and hence, in particular, to the monoid action $\mathbb{R}_{\geq 1}\subset \mathbb{R}_{>0}$). The case of the CASK domain will be important below, since it gives rise to an instanton deformation of the $1$-loop corrected Ferrara-Sabharwal metric, to be discussed below in Section \ref{QKdefFS}.
    \item The assumption on $\xi$ is not always satisfied from the conditions of Definition \ref{defCASK}. Indeed, it fails if one starts from the previous case of a CASK domain $M$, and a point from a $\mathbb{R}_{\geq 1}$-orbit is removed. It also fails if we remove a whole neighborhood of $\infty$ of the $\mathbb{C}^{\times}$-invariant domain $M$. In both cases one obtains a CASK manifold according to Definition \ref{defCASK} that does not satisfy the above condition on $\xi$.
    \item On the other hand, this assumption is needed for two key points. First to guarantee that the open subset $N'\subset N$ where we can apply the HK-QK correspondence is non-empty (Lemma \ref{Nrestriction} and \cite[Theorem 2]{QKPSK}). And second, to guarantee that the HK metric $g_{N}$ has the appropriate signature (Proposition \ref{signatureHK}) in order for $g_{\overline{N}}$ to be positive definite on $\overline{N}_{+}$ (\cite[Corollary 1]{Conification}).
\end{itemize}

\end{remark}

\end{section}
\begin{section}{Instanton deformations of the 1-loop corrected Ferrara-Sabharwal metric}\label{QKdefFS}

In this section we wish to compute a coordinate expression for the QK metric $g_{\overline{N}}$ of Theorem \ref{theorem2} in the case of an integral CASK domain with a compatible mutually local $\Omega$. This will allow us to see $g_{\overline{N}}$ as a deformation of the $1$-loop corrected Ferrara-Sabharwal metric $g_{\text{FS}}^c$ (see Theorem \ref{propcoordQK}). Furthermore, we specify in Proposition \ref{signatureQK} when the QK metric is positive definite, and we discuss the fate of certain Peccei-Quinn symmetries in Corollary \ref{PQsymm}.

\begin{definition}\label{CASKdomdef}\cite{ACD,CDS}
A CASK domain is tuple $(M,\mathfrak{F})$ where:
\begin{itemize}
    \item  $M\subset \mathbb{C}^{n+1}-\{0\}$ is a $\mathbb{C}^{\times}$-invariant domain. We denote the canonical holomorphic coordinates of $\mathbb{C}^{n+1}$ by $z^i$, $i=0,1,...,n$.
    \item $\mathfrak{F}:M \to \mathbb{C}$ is a holomorphic function, homogeneous of degree $2$ with respect to the natural $\mathbb{C}^{\times}$-action on $M$. 
    \item The matrix 
    \begin{equation}
        N_{ij}=\text{Im}\Big(\frac{\partial^2 \mathfrak{F}}{\partial z^i \partial z^j} \Big)
    \end{equation}
    has signature $(1,n)$ and $N_{ij}z^i\overline{z}^j>0$ for all $z \in M$.
\end{itemize}

\end{definition}

To any CASK domain $(M,\mathfrak{F})$ we can associate a CASK manifold
$(M,g_M,\omega_M,\nabla,\xi)$ \cite{ACD} where $z^i$ and $w_i=\frac{\partial{\mathfrak{F}}}{\partial z^i}(z^i)$
    form a global system of conjugate conical holomorphic special coordinates. If $x^i:=\text{Re}(z^i)$ and $y_i:=-\text{Re}(w_i)$, then $\nabla$ is defined such that $dx^i$ and $dy_i$ are flat. Furthermore
    \begin{equation}
        g_M=N_{ij}dz^id\overline{z}^j, \;\;\;\;\;\; \omega_M=\frac{i}{2}N_{ij}dz^i\wedge d\overline{z}^j=dx^i\wedge dy_i, \;\;\;\;\;\; \xi=z^i\partial_{z^i}+\overline{z}^i\partial_{\overline{z}^i}\,,
    \end{equation}
 and if we define $\Lambda \to M$ by $\Lambda:=\text{Span}_{\mathbb{Z}}\{\partial_{x^i},\partial_{y_i}\}$, then  $(M,g_M,\omega_M,\nabla,\xi,\Lambda)$ is an integral CASK manifold. 

\begin{definition} A triple $(M,\mathfrak{F},\Lambda)$ where  $(M,\mathfrak{F})$ is a CASK domain and $\Lambda\to M$ is the canonical integral lattice from above above will be called an integral CASK domain. 
\end{definition}

Now consider an integral CASK domain $(M,\mathfrak{F},\Lambda)$, and let $M_{\infty}\subset M$ be an open subset invariant under the $S^1\subset \mathbb{C}^{\times}$-action and under the monoid action $\mathbb{R}_{\geq 1}\subset \mathbb{C}^{\times}$. Let $(M_{\infty},g_M,\omega_M,\nabla,\xi,\Lambda)$ be the corresponding integral CASK manifold together with a compatible $\Omega$ such that $\text{Supp}(\Omega)\subset \text{span}_{\mathbb{Z}}\{\partial_{y_i}\}$. 

\begin{remark}
The main reason to possibly restrict to $M_{\infty}$ is that in general it seems easier to find compatible, non-trivial mutually local variations of BPS structures on $M_{\infty}$ than on $M$ (see the example of Section \ref{example}). Furthermore, notice that if $(\overline{M},g_{\overline{M}},\omega_{\overline{M}})$ is the associated PSK manifold to $(M,\mathfrak{F})$ and $\pi_{\overline{M}}:M\to \overline{M}$ the projection, then $\pi_{\overline{M}}(M_{\infty})=\overline{M}$.
\end{remark}
We wish to compute the pseudo-QK metric $g_{\overline{N}}$  from Theorem \ref{theorem2} associated to $(M_{\infty},g_M,\omega_M,\nabla,\xi,\Lambda)$ and $\Omega$, when we take

\begin{equation}\label{QKN}
    \overline{N}:=\{\text{Arg}(z^0)=0\}\subset P|_{N'}\,.
\end{equation} 
This submanifold $\overline{N}$ is transverse to $X_1^P=\widetilde{X}+f_1\partial_{\sigma}$ since $\widetilde{X}|_{\overline{N}} =2\widetilde{V}|_{\overline{N}}\not\in T\overline{N}$ and $\partial_{\sigma}|_{\overline{N}} \in T\overline{N}$.
On \cite[Theorem 5]{QKPSK} it was shown that the case with $\Omega=0$ gives the 1-loop corrected Ferrara-Sabharwal metric $g_{\text{FS}}^c$, so the case where $\Omega\neq 0$ will give a deformation of $g_{\text{FS}}^c$.\\

On $M$ we have the local coordinates
    \begin{equation}
        r=\sqrt{g_M(\xi,\xi)}=\sqrt{N_{ij}z^i\overline{z}^j}>0, \;\;\;\; \phi:=\text{Arg}(z^0), \;\;\;\; X^i:=z^i/z^0 \;\; \text{for} \;\; i=1,2,...,n\,.
    \end{equation}
    As in \cite{QKPSK}, we will replace $r$ with $\rho=2\pi r^2 -c$, the later coordinate representing the dilaton coordinate for $g_{\text{FS}}^c$. We then have on $\overline{N}$ the local coordinates $\rho$, $X^i$ (for $i=1,2,..n$), $\widetilde{\theta}_i:=\theta_{\partial_{x^i}}$, $\theta^i:=\theta_{\partial_{y_i}}$ and $\sigma$. The coordinates $\rho$ and $X^i$ are global on $\overline{N}$, while the others satisfy that if $\delta \in \Lambda^*$ and $\theta \to \theta+2\pi \delta$, then $\sigma \to \sigma +\frac{1}{2} \langle \delta,\theta \rangle$ (see Proposition \ref{linebundle}). We will furthermore use the following ``normalized" central charge $X_{\gamma}:=Z_{\gamma}/z^0$. In particular, for $\gamma \in \text{Supp}(\Omega)$ with $\gamma=n_i(\gamma)\partial_{y_i}$, we have $X_{\gamma}=n_i(\gamma)X^i$ where $X^0:=1$ and $X^i$ for $i=1,2,...,n$ are as before.

\begin{theorem}\label{propcoordQK} Consider an integral CASK domain $(M,\mathfrak{F},\Lambda)$ and let $(\overline{M},g_{\overline{M}})$ be the associated PSK manifold. Furthermore, let $(M_{\infty},g_M,\omega_M,\nabla,\xi,\Lambda)$ be as before with a compatible mutually local $\Omega$ satisfying $\text{Supp}(\Omega)\subset \text{span}_{\mathbb{Z}}\{\partial_{y_i}\}$. Taking $\overline{N}$ as in (\ref{QKN}), the expression for the QK metric $g_{\overline{N}}$ of Theorem \ref{theorem2} in the coordinates $\rho$, $X^i$, $\widetilde{\theta}_i$, $\theta^i$ and $\sigma$ takes the form: 

\begin{equation}\label{coordQKmetric2}
    \begin{split}
        g_{\overline{N}}=& \frac{\rho+c}{\rho+f^{\text{inst}}}\Big(g_{\overline{M}}-e^{\mathcal{K}}\sum_{\gamma}\Omega(\gamma)V_{\gamma}^{\text{inst}}\Big|dX_{\gamma} + X_{\gamma}\Big(\frac{d\rho}{2(\rho+c)}+\frac{d\mathcal{K}}{2}\Big)\Big|^2\Big)\\
        &+\frac{1}{2(\rho+f^{\text{inst}})^2}\Big(\frac{\rho +2c -f^{\text{inst}}}{2(\rho+c)}d\rho^2+2d\rho df^{\text{inst}}|_{\overline{N}}+(df^{\text{inst}})^2|_{\overline{N}}\Big)\\
        &+\frac{4(\rho+c+f_-^{\text{inst}})}{(\rho+f^{\text{inst}})^2(\rho+2c-f_1^{\text{inst}})}\Big(d\sigma -\frac{1}{4\pi}\langle \theta,d\theta \rangle -\frac{c}{4}d^c\mathcal{K}+\eta_+^{\text{inst}}|_{\overline{N}}+\frac{f_+^{\text{inst}}-c}{\rho+c+f_-^{\text{inst}}}\eta_-^{\text{inst}}|_{\overline{N}}\Big)^2\\
        &-\frac{1}{2\pi(\rho+f^{\text{inst}})}(W_i+W_i^{\text{inst}}|_{\overline{N}})(N+N^{\text{inst}})^{ij}(\overline{W}_j+\overline{W}_j^{\text{inst}}|_{\overline{N}}) \\
        &+\frac{(\rho+c)e^{\mathcal{K}}}{\pi(\rho+f^{\text{inst}})^2}\Big|X^i(W_i+W_i^{\text{inst}}|_{\overline{N}})+2\pi i\sum_{\gamma}\Omega(\gamma) A_{\gamma}^{\text{inst}}(V)\Big(dX_{\gamma} + X_{\gamma}\Big(\frac{d\rho}{2(\rho+c)}+\frac{d\mathcal{K}}{2}\Big)\Big)\Big|^2\\
        &
        +\frac{\rho+c+f_-^{\text{inst}}}{\rho+f^{\text{inst}}}\Big(\frac{d^c\mathcal{K}}{2}+\frac{2}{\rho+c+f_-^{\text{inst}}}\eta_-^{\text{inst}}|_{\overline{N}}\Big)^2-\frac{\rho+c}{\rho+f^{\text{inst}}}\Big(\frac{d^c\mathcal{K}}{2}\Big)^2
    \end{split}
\end{equation}
where $\mathcal{K}=-\log(N_{ij}X^i\overline{X}^j)$ is a K\"{a}hler potential for $g_{\overline{M}}$, $\eta^{\text{inst}}_{\pm}$ are given by
\begin{equation}
    \eta_{\pm}^{\text{inst}}:=\frac{1}{2}\Big(\Big(2\pi \eta^{\text{inst}}-\frac{f_1^{\text{inst}}}{2}\widetilde{\eta}\Big)\pm \Big(2\pi \sum_{\gamma}\Omega(\gamma)\eta_{\gamma}^{\text{inst}}-\frac{f^{\text{inst}}}{2}\widetilde{\eta}\Big)\Big)\,,
\end{equation}
and 
\begin{equation}
    f_{\pm}^{\text{inst}}:=(f^{\text{inst}}\pm f_1^{\text{inst}})/2 \,.
\end{equation}
Furthermore, the forms $1$-forms $W_i^{\text{inst}}|_{\overline{N}}$ and  $\eta^{\text{inst}}_{\pm}|_{\overline{N}}$ do not have $d\rho$ components.

\end{theorem}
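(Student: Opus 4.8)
The plan is to start from the global expression (\ref{QKmetricglobal}) for $g_{\overline{N}}$ and substitute all of its constituents, then change to the coordinates $\rho,X^i,\widetilde{\theta}_i,\theta^i,\sigma$ and restrict to $\overline{N}=\{\phi=0\}$. Concretely, I would expand $g_P=\tfrac{2}{f_1}\eta^2+2\pi\pi_N^*g_N$ together with the four one-forms $\theta_i^P$ of Definition \ref{thetadefs}, insert the local formula for $g_N$ from Corollary \ref{HKmetriccoord} and those for $\omega_1,\omega_2$ from Lemma \ref{usefulexpressions}, and use $\eta$ in the form (\ref{eta}). Since $\mathrm{Supp}(\Omega)\subset\mathrm{span}_{\mathbb{Z}}\{\partial_{y_i}\}$, the distinguished Darboux frame is $(\widetilde{\gamma}_i,\gamma^i)=(\partial_{x^i},\partial_{y_i})$, so $Z_{\gamma^i}=z^i$ and the normalized central charges are $X_{\gamma}=n_i(\gamma)X^i$; this identification is what allows the $dZ_{\gamma^i}$-data to be rewritten through $dX^i$, $d\rho$ and $d\phi$.

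\textbf{Coordinate change and restriction.} The core change of variables rests on $r^2=N_{ij}z^i\overline{z}^j=|z^0|^2e^{-\mathcal{K}}$ and $\rho=2\pi r^2-c$, from which $dz^i=z^0\,dX^i+X^i\,dz^0$ and $dz^0$ is expressed through $d\rho$, $d\mathcal{K}$ and $d\phi$. In these coordinates the rotating field is $V=\partial_\phi$: it rescales the common phase of all $z^i$ and fixes $X^i,\theta,\sigma$, so every contraction $\iota_V$ becomes a $\phi$-derivative, giving $f^{\mathrm{inst}},f_1^{\mathrm{inst}}$ in (\ref{ff1inst}) their concrete meaning. Two restriction identities then carry most of the simplification: on $\overline{N}$ one has $d\phi|_{\overline{N}}=0$, and since $d^c\log|z^0|^2=2\,d\phi$, the contact form $\widetilde{\eta}=d^c\log r$ of (\ref{contform}) becomes $\widetilde{\eta}|_{\overline{N}}=-\tfrac12 d^c\mathcal{K}$, which explains the systematic replacement of $\widetilde{\eta}$ by $-\tfrac12 d^c\mathcal{K}$ throughout (\ref{coordQKmetric2}).

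\textbf{The ``no $d\rho$'' claim and its use.} I would establish the final sentence first, since it streamlines the rest. Both $A_\gamma^{\mathrm{inst}}$ and $\eta_\gamma^{\mathrm{inst}}$ are built from $\tfrac{dZ_\gamma}{Z_\gamma}-\tfrac{d\overline{Z}_\gamma}{\overline{Z}_\gamma}$, and writing $Z_\gamma=z^0X_\gamma$ splits this into the piece $\tfrac{dz^0}{z^0}-\tfrac{d\overline{z}^0}{\overline{z}^0}=2i\,d\phi$, which restricts to $0$ on $\overline{N}$, plus a piece $\tfrac{dX_\gamma}{X_\gamma}-\tfrac{d\overline{X}_\gamma}{\overline{X}_\gamma}$ involving only $dX^i,d\overline{X}^i$. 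Together with $\iota_V d\theta_\gamma=0$ and the fact that $d\theta_\gamma$ carries no $d\rho$, this shows $W_i^{\mathrm{inst}}|_{\overline{N}}$, $\eta_\gamma^{\mathrm{inst}}|_{\overline{N}}$ and (after checking that $\iota_V(g_N-\pi_M^*g_M)$ again produces only the same $\tfrac{dZ_\gamma}{Z_\gamma}-\tfrac{d\overline{Z}_\gamma}{\overline{Z}_\gamma}$-type combinations) the forms $\eta_\pm^{\mathrm{inst}}|_{\overline{N}}$ have no $d\rho$ component. Consequently every $d\rho$ contribution in the assembly arises only from the explicit $d\rho$, $df^{\mathrm{inst}}$ terms, keeping the radial block tractable.

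\textbf{Main assembly and obstacle.} The heart of the computation is organizing the quadratic form in $\eta$, $\iota_V g_N$ and $d^c\mathcal{K}$. The $\eta$-terms combine into $-\tfrac{2}{ff_1}\eta^2+\tfrac{2}{f^2}(\theta_1^P)^2$ with $\theta_1^P=\eta+2\pi\iota_V g_N$, which must be completed into the square yielding the $d\sigma$-line, while the $\widetilde{\eta}$/$d^c\mathcal{K}$ content of $\eta$ via (\ref{eta}) recombines with $\theta_0^P=-\tfrac12 df$, $\theta_2^P$, $\theta_3^P$ to produce the $d^c\mathcal{K}$-squares of the last line. Using $f=\rho+f^{\mathrm{inst}}$, $-f_1=\rho+2c-f_1^{\mathrm{inst}}$ and the identity $\tfrac12(f-f_1)=2\pi g_N(V,V)=\rho+c+f_-^{\mathrm{inst}}$ (from (\ref{condff1})) turns all denominators into the stated ones, and the even/odd splitting of the $\widetilde{\eta}$-twist of $\eta$ produces exactly the combinations $\eta_\pm^{\mathrm{inst}}$ and $f_\pm^{\mathrm{inst}}$. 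I expect the main obstacle to be precisely this bookkeeping: correctly tracking the numerous cross terms through the completion of squares and verifying that the spurious $d\rho$-vs-$X^i$-vs-$\sigma$ mixings cancel as claimed. A decisive consistency check is to set $\Omega=0$ and recover the $1$-loop corrected Ferrara-Sabharwal metric of \cite[Theorem 5]{QKPSK}, which pins down all normalization conventions.
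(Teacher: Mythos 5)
Your proposal follows essentially the same route as the paper's own proof in Appendix \ref{appendixB}: expand the global formula (\ref{QKmetricglobal}) block by block (the $\eta$/$\theta_1^P$ square completion yielding the $d\sigma$-line and the $\eta_\pm^{\text{inst}}$, $f_\pm^{\text{inst}}$ combinations; the $g_N$/$\theta_0^P$ block via the cone decomposition and the dilaton coordinate; the $\theta_2^P,\theta_3^P$ block as a modulus square), then restrict to $\overline{N}$ using $\widetilde{\eta}|_{\overline{N}}=-\tfrac12 d^c\mathcal{K}$ and $dZ_\gamma/z^0|_{\overline{N}}=dX_\gamma+X_\gamma\big(\tfrac{d\rho}{2(\rho+c)}+\tfrac{d\mathcal{K}}{2}\big)$, with the ``no $d\rho$'' claim following exactly as you argue from $\big(\tfrac{dZ_\gamma}{Z_\gamma}-\tfrac{d\overline{Z}_\gamma}{\overline{Z}_\gamma}\big)\big|_{\overline{N}}=\tfrac{dX_\gamma}{X_\gamma}-\tfrac{d\overline{X}_\gamma}{\overline{X}_\gamma}$. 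The only differences are cosmetic (you front-load the $d\rho$ claim, which the paper verifies at the end, and your attribution of the final $d^c\mathcal{K}$-squares slightly conflates the $\eta$-block with the cone part of $g_N$), so the proposal is correct and matches the paper's argument.
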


\begin{proof}
The proof is given in appendix \ref{appendixB}. Furthermore, coordinate expressions for  $df^{\text{inst}}|_{\overline{N}}$ and $W_i^{\text{inst}}|_{\overline{N}}$ are given in (\ref{dfinst}) and (\ref{winst}); while coordinate expressions for $\eta^{\text{inst}}_{\pm}|_{\overline{N}}$ can be found using (\ref{eta1inst}). 
\end{proof}

\begin{remark} By setting $\Omega=0$ in (\ref{coordQKmetric2}) all the instanton terms vanish, and we recover $g_{\text{FS}}^c$:
\begin{equation}\label{FSmetric}
    \begin{split}
    g_{\text{FS}}^c=&\frac{\rho+c}{\rho}g_{\overline{M}}+\frac{\rho +2c }{4(\rho+c)\rho^2}d\rho^2+\frac{4(\rho+c)}{\rho^2(\rho+2c)}\Big(d\sigma -\frac{1}{4\pi}\langle \theta,d\theta \rangle -\frac{c}{4}d^c\mathcal{K}\Big)^2\\
    &-\frac{1}{2\pi\rho}W_i\Big(N^{ij}-\frac{2(\rho+c)e^{\mathcal{K}}}{\rho}X^i\overline{X}^j\Big)\overline{W}_j\,.\\
    \end{split}
\end{equation}
Indeed, by scaling $\rho \to \rho/\pi$, $c\to c/\pi$, $\sigma \to -\sigma/4\pi$, and $\theta^i\to -\theta^i$, together with \cite[Equation 4.12]{QKPSK} and the fact that $N_{ij}$ from \cite{QKPSK} differs by a factor of $2$ with respect to our $N_{ij}$; we obtain \cite[Equation 4.11]{QKPSK}.
\end{remark}

\begin{proposition}\label{signatureQK} The subset $\overline{N}_+=\{f>0,\;\;\; f_1<0\} \subset \overline{N}$ is non-empty, and $g_{\overline{N}}$ is positive-definite on $\overline{N}_+$. Furthermore, there is a non-empty neighborhood of $\overline{N}_{\infty}\subset \overline{N}_{+}$ of $\rho =\infty$ where $g_{\text{FS}}^c$ is defined and positive definite. In particular, $g_{\overline{N}}$ gives a deformation of $g_{\text{FS}}^c$ on $\overline{N}_{\infty}$.
\end{proposition}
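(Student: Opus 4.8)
The plan is to reduce the first two assertions to the apparatus already built and then to localize near $\rho=\infty$, where the comparison with $g_{\text{FS}}^c$ is transparent. Since $(M,\mathfrak{F})$ is a CASK domain, the Euler field $\xi$ generates a free action of the monoid $\mathbb{R}_{\geq 1}$ (the rescaling $z\mapsto tz$, $t\geq 1$), and $M_\infty$ is by assumption $\mathbb{R}_{\geq 1}$-invariant, so the hypotheses of Theorem \ref{theorem2} hold. Consequently, once $\overline{N}_+=\overline{N}\cap N'_+$ is shown to be non-empty, the positive-definiteness of $g_{\overline{N}}$ on $\overline{N}_+$ is exactly the final conclusion of Theorem \ref{theorem2}, so there is nothing further to prove for the second claim.

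For the non-emptiness I would work directly with the global coordinate $\rho$ on $\overline{N}$, which is unbounded above because $M_\infty$ is $\mathbb{R}_{\geq 1}$-invariant and $r$ scales linearly along the $\xi$-flow. Writing $f=\rho+f^{\text{inst}}$ and $f_1=-\rho-2c+f_1^{\text{inst}}$ as in Lemma \ref{ff1}, the instanton pieces $f^{\text{inst}}$ and $f_1^{\text{inst}}$ are built from the functions $\iota_V\eta_\gamma^{\text{inst}}$; by the Bessel asymptotics $K_\nu(x)\sim\sqrt{\pi/2x}\,e^{-x}$ together with the support and convergence properties (\ref{supportproperty})--(\ref{convergenceproperty}), and the fact that $|Z_\gamma|$ grows linearly along the $\xi$-flow, these terms tend to $0$ locally uniformly as $\rho\to\infty$. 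This is precisely the estimate already carried out in Lemma \ref{Nrestriction} and Proposition \ref{signatureHK}. Hence for $\rho$ large one has $f=\rho+o(1)>0$ and $f_1=-\rho+O(1)<0$, so $\overline{N}_+$ contains every point of $\overline{N}$ with $\rho$ large enough; in particular it is non-empty and contains an open set of the form $\{\rho>R\}$.

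For the last two assertions I would recall, from the Remark following Theorem \ref{propcoordQK}, that $g_{\text{FS}}^c$ is the $\Omega=0$ specialization (\ref{FSmetric}) of the coordinate formula (\ref{coordQKmetric2}). Its coefficients involve only the denominators $\rho$, $\rho+c$, $\rho+2c$, so $g_{\text{FS}}^c$ is smooth wherever these are nonzero; moreover, by \cite{QKPSK} (equivalently, by applying Theorem \ref{theorem2} with $\Omega=0$, for which $f=\rho$ and $f_1=-\rho-2c$, so that the positivity region is $f>0,\ f_1<0$) it is positive definite precisely for $\rho>0$ and $\rho+2c>0$. I would then pick $R>\max(0,-2c)$ large enough that simultaneously $f>0$, $f_1<0$ and the $\Omega=0$ conditions hold throughout $\{\rho>R\}$, and set $\overline{N}_\infty:=\overline{N}_+\cap\{\rho>R\}$. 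On this non-empty set both $g_{\overline{N}}$ (by Theorem \ref{theorem2}) and $g_{\text{FS}}^c$ are defined and positive definite, and since every instanton term in (\ref{coordQKmetric2}) is smooth and vanishes as $\Omega\to 0$, the metric $g_{\overline{N}}$ equals $g_{\text{FS}}^c$ plus these corrections, i.e.\ a genuine deformation of $g_{\text{FS}}^c$ on $\overline{N}_\infty$.

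The hard part will be ensuring that positive-definiteness, and the decay $f^{\text{inst}},f_1^{\text{inst}}\to 0$, hold on a full open neighborhood of the end $\rho=\infty$ rather than merely along a single $\xi$-ray: this requires the \emph{locally uniform} (in $X^i$ and in the periodic angles $\theta$) control supplied by the convergence property (\ref{convergenceproperty}), using that the $\theta$-coordinates lie in a compact torus so that the $e^{in\theta_\gamma}$ factors are bounded. Once this uniformity is in place, everything else is a direct invocation of Theorem \ref{theorem2}, the positivity statement for $g_{\text{FS}}^c$, and the explicit structure of (\ref{coordQKmetric2}).
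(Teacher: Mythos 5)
Your proposal is correct and follows essentially the same route as the paper: the decay $f^{\text{inst}},f_1^{\text{inst}}\to 0$ as $\rho\to\infty$ (Bessel asymptotics plus the support and convergence properties, via $|Z_\gamma|=\sqrt{(\rho+c)/2\pi}\,e^{\mathcal{K}/2}|X_\gamma|$) gives non-emptiness of $\overline{N}_+$, Theorem \ref{theorem2} gives positive-definiteness there, and \cite[Theorem 5]{QKPSK} handles $g_{\text{FS}}^c$, with $\overline{N}_\infty$ taken as an intersection. The only overstatement is your claim that $\overline{N}_+$ contains a full slab $\{\rho>R\}$: the convergence property only yields locally uniform (compact-normal) decay, so over a noncompact $\overline{M}$ this need not hold with a single $R$; the paper sidesteps this by arguing pointwise in the remaining coordinates and defining $\overline{N}_\infty=\overline{N}_+\cap\{\rho>\max\{0,-2c\}\}$, which is all the statement requires --- and your own construction $\overline{N}_\infty:=\overline{N}_+\cap\{\rho>R\}$ survives without the slab claim, since non-emptiness follows from the pointwise limit.
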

\begin{proof}
Fix a point $(\rho,X^i,\theta^i,\widetilde{\theta}_i,\sigma)\in \overline{N}$. The key observation is that with respect to our coordinates $f^{\text{inst}}, f_1^{\text{inst}}\to 0$ as $\rho \to \infty$, due to the exponential decay of the Bessel functions as $\rho \to \infty$ and the convergence property (\ref{convergenceproperty}). This follows from the dependence of the Bessel functions on $|Z_{\gamma}|$ and the identity

\begin{equation}
    |Z_{\gamma}|=|z^0||X_{\gamma}|=re^{\mathcal{K}(X)/2}|X_{\gamma}|=\sqrt{\frac{\rho+c}{2\pi}}e^{\mathcal{K}(X)/2}|X_{\gamma}|, \;\;\;\;\; \gamma \in \text{Supp}(\Omega)\,.
\end{equation}Hence, for sufficiently big $\rho$ we have $f=\rho + f^{\text{inst}}>0$ and $f_1=-\rho -2c + f_1^{\text{inst}}<0$, and hence $(\rho,X^i,\theta^i,\widetilde{\theta}_i,\sigma)\in \overline{N}_+$. By Theorem \ref{theorem2} it then follows that $g_{\overline{N}}$ is positive definite on $\overline{N}_{+}$.\\

On the other hand, \cite[Theorem 5]{QKPSK} shows that $g_{\text{FS}}^c$ is positive definite and defined on $\{\rho > \text{max}\{0,-2c\}\}\subset \overline{N}$. Hence, taking $\overline{N}_{\infty}=\overline{N}_{+}\cap \{\rho > \text{max}\{0,-2c\}\}$ gives the required neighborhood of $\rho=\infty$.
\end{proof}

We finish this section with a few words about the Heisenberg group of isometries for certain lifts of $g_{\text{FS}}^c$ and $g_{\overline{N}}$.  Consider the trivial bundle of Heisenberg groups $\overline{M}\times \mathbb{R}_{\rho>0}\times \text{Heis}_{2n+3}(\mathbb{R})$, where we identify $\text{Heis}_{2n+3}(\mathbb{R})\cong \mathbb{R}^{2(n+1)+1}$ and suggestively denote a point of $\mathbb{R}^{2(n+1)+1}$ by $(\widetilde{\theta}_i,\theta^i,\sigma)=(\theta,\sigma)$. The group structure is given by

\begin{equation}
    (\theta,\sigma)\cdot (\theta',\sigma')=(\theta+\theta',\sigma + \sigma' +\frac{1}{4\pi}\langle \theta,\theta' \rangle)\,.
\end{equation}

Both metrics $g_{\overline{N}}$ and $g_{\text{FS}}^c$ lift to an open subset of $\overline{M}\times \mathbb{R}_{\rho>0}\times \text{Heis}_{2n+3}(\mathbb{R})$. Indeed, we just take the same formulas (\ref{coordQKmetric2}) and (\ref{FSmetric}) as before, where now $(\widetilde{\theta}_i,\theta^i,\sigma)$ are global coordinates of $\mathbb{R}^{2(n+1)+1}$. Then we can explicitly check that $\text{Heis}_{2n+3}(\mathbb{R})$ acts by isometries on $g_{\text{FS}}^c$, while for $g_{\overline{N}}$ we have the following:

\begin{corollary} \label{PQsymm} Consider the previous lifts of $g_{\overline{N}}$ to an open subset of $\overline{M}\times \mathbb{R}_{\rho>0}\times \text{Heis}_{2n+3}(\mathbb{R})$. For $\gamma \in \text{Supp}(\Omega)$ we denote $\gamma=n_i(\gamma)\partial_{y_i}$, and we define $d_i:=\text{gcd}(\{n_i(\gamma)\}_{\gamma\in \text{Supp}(\Omega)})$ for $i=0,1...,n$ such that $\{n_i(\gamma)\}_{\gamma \in \text{Supp}(\Omega)}\neq \{0\} $. If $\Omega\neq 0$, then the following proper subgroup  of $\text{Heis}_{2n+3}(\mathbb{R})$ acts by isometries on the lift of $g_{\overline{N}}$:
\begin{equation}
    \{ (\widetilde{\eta}_i,\eta^i,\kappa)\in \text{Heis}_{2n+3}(\mathbb{R}) \;\; | \;\; \eta^i\in \frac{2\pi \mathbb{Z}}{d_i} \;\; \text{for} \;\; i=0,1,...,n \;\; \text{such that} \;\; \{n_i(\gamma)\}_{\gamma\in \text{Supp}(\Omega)}\neq \{0\} \}\,.
\end{equation}

\end{corollary}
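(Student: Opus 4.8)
The plan is to track exactly how each term of the coordinate expression (\ref{coordQKmetric2}) transforms under the left action of $(\widetilde\eta_i,\eta^i,\kappa)\in\text{Heis}_{2n+3}(\mathbb R)$, which sends $\theta\mapsto\theta+\eta$ (that is, $\widetilde\theta_i\mapsto\widetilde\theta_i+\widetilde\eta_i$, $\theta^i\mapsto\theta^i+\eta^i$) and $\sigma\mapsto\sigma+\kappa+\frac{1}{4\pi}\langle\eta,\theta\rangle$, while fixing $\rho$ and $X^i$. First I would isolate the dependence on the fibre coordinates $\theta$ and $\sigma$. The coordinate $\sigma$ enters (\ref{coordQKmetric2}) only through the one-form $\beta:=d\sigma-\frac{1}{4\pi}\langle\theta,d\theta\rangle$, and a direct computation shows $\beta$ is invariant under the \emph{full} group: since $\eta,\kappa$ are constant one has $d\sigma\mapsto d\sigma+\frac{1}{4\pi}\langle\eta,d\theta\rangle$ and $\langle\theta,d\theta\rangle\mapsto\langle\theta,d\theta\rangle+\langle\eta,d\theta\rangle$, and the two new terms cancel. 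The forms $W_i,\overline W_i$ depend on the fibre coordinates only through $d\theta$ and are therefore invariant as well. Hence all of the non-instanton ($\Omega=0$) dependence on $\theta,\sigma$ is invariant under the whole Heisenberg group, consistently with the already-noted invariance of $g_{\text{FS}}^c$.

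The remaining $\theta$-dependence sits entirely in the instanton quantities $V_\gamma^{\text{inst}}$, $A_\gamma^{\text{inst}}$, $\eta_\gamma^{\text{inst}}$ of (\ref{inst1}) and (\ref{etagammainst}), and in everything assembled from them: $f^{\text{inst}}$, $f_1^{\text{inst}}$, $f_\pm^{\text{inst}}$, $N_{ij}^{\text{inst}}$, $W_i^{\text{inst}}$, $\eta_\pm^{\text{inst}}$ and their differentials. The key observation is that every one of these is built out of the elementary Fourier modes $e^{in\theta_\gamma}$ with $n>0$ and $\gamma\in\text{Supp}(\Omega)$, multiplied by functions of $\rho,X^i$ and by $d\theta$-forms which are themselves invariant. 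Because the variation of BPS structures is adapted with $\text{Supp}(\Omega)\subset\text{span}_{\mathbb Z}\{\gamma^i\}$, writing $\gamma=n_i(\gamma)\gamma^i$ gives $\theta_\gamma=n_i(\gamma)\theta^i$, so under the action the mode transforms by $e^{in\theta_\gamma}\mapsto e^{in\,n_i(\gamma)\eta^i}e^{in\theta_\gamma}$, with no dependence on $\widetilde\eta_i$ or $\kappa$. Consequently every instanton term, and hence $g_{\overline N}$ itself, is left invariant if and only if $e^{in\,n_i(\gamma)\eta^i}=1$ for all $n>0$ and all $\gamma\in\text{Supp}(\Omega)$; taking $n=1$ this is equivalent to the system of conditions $n_i(\gamma)\eta^i\in 2\pi\mathbb Z$ for every $\gamma\in\text{Supp}(\Omega)$, while $\widetilde\eta_i$ and $\kappa$ stay unconstrained.

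It then remains to verify that the subgroup displayed in the statement satisfies these conditions and is proper. For an index $i$ with $\{n_i(\gamma)\}_\gamma\neq\{0\}$ we have $d_i\mid n_i(\gamma)$ by definition of the gcd, so $\eta^i\in\frac{2\pi\mathbb Z}{d_i}$ forces $n_i(\gamma)\eta^i\in 2\pi\mathbb Z$; for the remaining indices $n_i(\gamma)=0$ contributes nothing, and summing over $i$ yields $n_i(\gamma)\eta^i\in 2\pi\mathbb Z$ for every $\gamma$. That the displayed set is a subgroup is immediate from the Heisenberg product: the $\eta$-slot adds componentwise, so the lattice conditions $\eta^i\in\frac{2\pi\mathbb Z}{d_i}$ are preserved, while the $\widetilde\eta_i$- and $\kappa$-slots are free. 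Finally, when $\Omega\neq0$ there is at least one index $i$ with $\{n_i(\gamma)\}_\gamma\neq\{0\}$, and for that $i$ the discrete group $\frac{2\pi\mathbb Z}{d_i}$ is a proper subgroup of $\mathbb R$, so the whole subgroup is proper inside $\text{Heis}_{2n+3}(\mathbb R)$. The only real work, and the main obstacle, is the reduction in Step 2: one must unwind the definitions of the derived quantities $f^{\text{inst}},f_1^{\text{inst}},N^{\text{inst}}_{ij},W_i^{\text{inst}},\eta^{\text{inst}}_\pm$ and confirm that, apart from the already-handled invariant combination $\beta$ and the $d\theta$-forms, their entire $\theta$-dependence really is carried by the modes $e^{in\theta_\gamma}$; the one-forms $\eta_\pm^{\text{inst}}$ and the precise way $\langle\theta,d\theta\rangle$ is packaged with $d\sigma$ require the most care.
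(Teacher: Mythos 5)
Your proof is correct and takes essentially the same approach as the paper's (much more condensed) argument: the paper likewise reads the invariance off the explicit formula (\ref{coordQKmetric2}), noting that the fibre coordinates enter the instanton terms only through the modes $e^{im\theta_\gamma}=e^{imn_j(\gamma)\theta^j}$, which restricts the shifts of $\theta^j$ to $2\pi\mathbb{Z}/d_j$. Your extra steps --- the explicit invariance check of $d\sigma-\frac{1}{4\pi}\langle\theta,d\theta\rangle$ and of the $W_i$, and the subgroup/properness verifications --- are details the paper leaves implicit.
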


\begin{proof}
This follows from the explicit formula (\ref{coordQKmetric2}), together with the fact that in all the instanton correction terms the  coordinates $(\widetilde{\theta}_i,\theta^i)$ only enter in terms of functions of the form $e^{im\theta_{\gamma}}=e^{imn_j(\gamma)\theta^j}$ with $m\in \mathbb{Z}$ and $\gamma \in \text{Supp}(\Omega)$, breaking the allowed shifts of $\theta^j$ to discrete shifts. In particular, if $j$ is such that $\{n_j(\gamma)\}_{\gamma\in \text{Supp}(\Omega)}\neq 0$, then a shift in $\theta^j$ must preserve $e^{in_j(\gamma)\theta^j}$ for all $\gamma \in \text{Supp}(\Omega)$, which implies that the allowed shifts of $\theta^j$ must lie in $2\pi \mathbb{Z}/d_j$.
\end{proof}

\begin{remark}In the physics parlance, the previous corollary says that including ``mutually local instanton corrections" break the allowed shifts of the ``electric" angles $\theta^j$ to discrete shifts. Non-mutually local instanton corrections are expected to further break down the allowed shifts of the ``magnetic" angles $\widetilde{\theta}_j$ to discrete shifts.  
\end{remark}

\begin{subsection}{Comments on the metric}
\label{commentssec}
We make some comments on our expression (\ref{coordQKmetric2}) compared to the one obtained in $(3.6)$ of \cite{HMmetric}.

\begin{itemize}
    \item The function $f=\rho + f^{\text{inst}}$ that appears as the Hamiltonian for the infinitesimal rotating action of $(N,g_N,\omega_1,\omega_2,\omega_3)$ with respect $\omega_3$ can be thought as a ``quantum corrected" dilaton coordinate. Indeed, in  \cite{HMreview1,HMreview2,HMmetric} the dilaton coordinate is built out of what they call a ``contact potential" of the QK twistor space, and this contact potential can be related to a Hamiltonian for the infinitesimal rotating action (see for example the end remarks of \cite{AMNP}).
    
    In the twistor approach of $\cite{HMmetric}$, it seems that the natural coordinate to consider is the quantum corrected dilaton $f$ instead of $\rho$. The fact that they use $f$ as a coordinate is what gives rise to their implicitly defined $\mathcal{R}$-function in their expression for the instanton corrected metric (see their equations $(3.6)$ and $(3.7)$).  
    \item We have an ``$S^1$-bundle term" or ``NS-axion bundle term", given by
    \begin{equation*}
       4\frac{\rho+c+f_-^{\text{inst}}}{(\rho+f^{\text{inst}})^2(\rho+2c-f_1^{\text{inst}})}\Big(d\sigma -\frac{1}{4\pi}\langle \theta,d\theta \rangle -\frac{c}{4}d^c\mathcal{K}+\eta_+^{\text{inst}}|_{\overline{N}}+\frac{f_+^{\text{inst}}-c}{\rho+c+f_-^{\text{inst}}}\eta_-^{\text{inst}}|_{\overline{N}}\Big)^2
    \end{equation*}
    in (\ref{coordQKmetric2}). Since $d\sigma$ appears only in this term, $g_{\overline{N}}$ has an $S^1$-action by isometries (as expected from HK-QK correspondence). Furthermore, by using the expression (\ref{eta1inst}) we see that the connection form of the circle bundle does not have a $d\rho$ component. In \cite{HMmetric} an analogous conclusion is reached in terms of the quantum corrected dilaton direction $df$.
    \item In the case of $g_{\text{FS}}^c$ the dilaton coordinate $\rho$ is orthogonal to the rest of the coordinates. This is not the case when including the instanton corrections, due the non-trivial mixing of $d\rho$ with $dX^i$, $d\theta^i$ and $d\widetilde{\theta}_i$ in (\ref{coordQKmetric2}). However, by the previous comment there is no mixing between $d\rho$ and $d\sigma$, so these two directions remain orthogonal. As in the previous point, in \cite{HMmetric} an analogous conclusion is obtained in terms of $f=\rho + f^{\text{inst}}$.
    \item The symplectic invariance that needs to be checked in \cite{HMmetric} is automatic from our construction. Indeed, all the objects of Section \ref{instcorrsec} and \ref{hyperholsec} involved in the construction of $g_{\overline{N}}$ do not depend on the choice of any Darboux frame, so the (non-explicit) symplectic invariance of formula (\ref{coordQKmetric2}) follows automatically. 
\end{itemize}
\end{subsection}

\begin{subsection}{An example}\label{example}
Here we present a simple example of our previous constructions where the PSK manifold associated to the CASK manifold is the complex hyperbolic space $\mathbb{C}H^n$. The choice of variation of mutually local BPS structures that we will take will be due to its mathematical simplicity, rather than its physical significance. We remark, however, that in the case $n=0$ where the PSK manifold reduces to a point (i.e. $\mathbb{C}H^0=\{*\}$), the example will give an instance of an instanton deformation of the universal hypermultiplet metric near its infinite end of finite volume, obtained  after taking a quotient by a lattice in the Heisenberg group (see Corollary \ref{finitevolend} below).\\ 

We start with a CASK domain $(M,\mathfrak{F})$ given as follows: 

\begin{itemize}
    \item We take $M= \{(z^0,z^1,...,z^n)\in \mathbb{C}^{n+1} \;\; | \;\; |z^0|^2>\sum_{i=1}^n |z^i|^2 \}$.
    \item The holomorphic prepotential  is given by
    \begin{equation}
        \mathfrak{F}(z^0,...,z^n)=\frac{i}{2}((z^0)^2-\sum_{i=1}^n(z^i)^2)\,.
    \end{equation}
\end{itemize}
In particular, we obtain a CASK manifold $(M,g_M,\omega_M,\nabla,\xi)$ where $\nabla=D=d$ and:
\begin{equation}
    \omega_M=\frac{i}{2}(dz^0\wedge d\overline{z}^0 - \sum_{i=1}^n dz^i \wedge d\overline{z}^i),\;\;\;\;\;\;\;\;g_M=dz^0d\overline{z}^0 - \sum_{i=1}^n dz^id\overline{z}^i, \;\;\;\;\;\;\; \xi= z^i\partial_{z^i}+\overline{z}^i\partial_{\overline{z}^i}\,.
\end{equation}
A global conjugate system of conical special holomophic coordinates is given by
    \begin{equation}
        z^i,\;\;\;\;\; w_i=\tau_{ij}z^j\,,
    \end{equation}
    where $\tau_{ij}=\text{diag}(i,-i,...,-i)$.
     In the corresponding global conical affine special coordinates $(x^i,y_i)=(\text{Re}(z^i),-\text{Re}(w_i))$ we set $\Lambda=\text{Span}_{\mathbb{Z}}\{\partial_{x^i},\partial_{y_i}\}$, so that $(M,\mathfrak{F},\Lambda)$ is an integral CASK domain. \\

The associated PSK manifold $(\overline{M},g_{\overline{M}},\omega_{\overline{M}})$ is described as follows: 

\begin{itemize}
    \item We have $\overline{M}=M/\mathbb{C}^{\times}$. If $n=0$ then $\overline{M}$ is just a point; otherwise $\overline{M}=\{ (X^1,...,X^n)\in \mathbb{C}^n \;\; | \;\; \sum_{i=1}^n |X^i|^2<1 \}$ with projection $\pi_{\overline{M}}: M\to \overline{M}$ given by $\pi(z^0,z^1,...,z^n)=(z^1/z^0,...,z^n/z^0)$.
    \item Setting $X^i=z^i/z^0$ for $i=0,1,...,n$, the K\"{a}hler potential for $g_{\overline{M}}$ is given by
    \begin{equation}
        \mathcal{K}=-\log(X^i\text{Im}\Big(\frac{\partial^2 \mathfrak{F}}{\partial z^i \partial z^j}\Big)\overline{X}^j)=-\log ( 1-\sum_{i=1}^n|X^i|^2)\,,
    \end{equation}
    which gives the metric
    \begin{equation}
     g_{\overline{M}}=\frac{(1-\sum_{j=1}^n|X^j|^2)\sum_{i=1}^n|dX^i|^2 +|\sum_{i=1}^n\overline{X}^jdX^j|^2}{(1-\sum_{i=1}^n|X^i|^2)^2}\,.
    \end{equation}
    This identifies $(\overline{M},g_{\overline{M}},\omega_{\overline{M}})$ as the complex hyperbolic space $\mathbb{C}H^{n}$.
\end{itemize}

In the global Darboux frame $(\widetilde{\gamma}_i,\gamma^i):=(\partial_{x^i},\partial_{y_i})$ of $\Lambda$, we consider a mutually local variation of BPS structures with $\text{Supp}(\Omega)=\{\pm \gamma^0\}$ and $\Omega(\pm \gamma^0)=m$ for some $m\in \mathbb{Z}-\{0\}$. We remark that $Z_{\gamma^0}=z^0$ does not vanish on $M$, and hence $\Omega$ satisfies the support property. The rest of the properties required by a mutually local variation of BPS structures are trivial to check.

\begin{remark}
One can of course consider more complicated mutually local variations of BPS structures. The one chosen above is only to simplify the argument below regarding the non-degeneracy of the tensor $T$ (recall Definition \ref{defT}). 
\end{remark}

Notice that the tensor field

\begin{equation}
    T=\pi^*_M(g_M) + \sum_{\gamma}\Omega(\gamma) V_{\gamma}^{\text{inst}}\pi^*_M|dZ_{\gamma}|^2=\pi^*_M(g_M) + \Omega(\gamma^0) (V_{\gamma^0}^{\text{inst}}+V_{-\gamma^0}^{\text{inst}})\pi^*_M|dz^0|^2
\end{equation}
does not satisfy the compatibility condition for all points in $N=T^*M/\Lambda^*\cong M\times (S^1)^{2n+2}$ unless $\Omega=0$. However, due to the exponential decay of the Bessel functions in $V_{\gamma^0}^{\text{inst}}$ in the variable $|z^0|$, this can be fixed by restricting $M$ to the domain $M_K:=\{(z^0,...,z^n)\in M\;\;\; | \;\;\; |z^0|>K>0 \}$ for $K>0$ sufficiently big. $M_{K}$ carries a free $S^1$-action generated by $J\xi$, and a free action generated by $\xi$ of the monoid $\mathbb{R}_{\geq 1}\subset \mathbb{C}^{\times}$. In particular, $M_K\subset M$ is an open subset of the form $M_{\infty}$ considered in the first part of Section \ref{QKdefFS}, so we can apply Theorem \ref{propcoordQK} to obtain a QK metric $(\overline{N},g_{\overline{N}})$ associated to the PSK manifold $(\mathbb{C}H^n,g_{\overline{M}},\omega_{\overline{M}})$, and given by (\ref{coordQKmetric2}). We then obtain the following immediate corollary using Proposition \ref{signatureQK}:

\begin{corollary}\label{comhyperdef} There is a neighborhood $\overline{N}_{\infty}\subset \overline{N}_{+}$ of $\rho=\infty$, where both QK metrics $g_{\text{FS}}^c$ and $g_{\overline{N}}$ associated to $(\mathbb{C}H^{n},g_{\overline{M}})$ are defined and positive definite. In particular $g_{\overline{N}}$ gives a deformation of $g_{\text{FS}}^c$ on $\overline{N}_{\infty}$.

\end{corollary}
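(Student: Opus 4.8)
The plan is to observe that the $\mathbb{C}H^n$ example constructed above is precisely an instance of the setup to which Proposition \ref{signatureQK} applies, so that the corollary follows by directly quoting that proposition. Concretely, I would first check that all the standing hypotheses of Theorem \ref{propcoordQK} (hence of Proposition \ref{signatureQK}) are met: that $(M,\mathfrak{F},\Lambda)$ is an integral CASK domain, that the restriction $M_K=\{|z^0|>K\}$ is an $S^1$- and $\mathbb{R}_{\geq 1}$-invariant open subset of the form $M_\infty$, and that the chosen $\Omega$ with $\text{Supp}(\Omega)=\{\pm\gamma^0\}$ is a mutually local variation of BPS structures satisfying $\text{Supp}(\Omega)\subset \text{span}_{\mathbb{Z}}\{\partial_{y_i}\}$. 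Most of these points have already been recorded in the construction above; the support and convergence properties are trivial because $\text{Supp}(\Omega)$ is finite and $Z_{\gamma^0}=z^0$ is nowhere zero on $M$.

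The one verification that requires a short argument is the compatibility of $\Omega$, i.e.\ horizontal non-degeneracy (Definition \ref{defT}) of the tensor
\begin{equation*}
T=\pi_M^*g_M+\Omega(\gamma^0)\big(V_{\gamma^0}^{\text{inst}}+V_{-\gamma^0}^{\text{inst}}\big)\,\pi_M^*|dz^0|^2.
\end{equation*}
Here $g_M$ is non-degenerate of signature $(2,2n)$, while the instanton correction is governed by the Bessel functions $K_0(2\pi n|z^0|)$ entering $V_{\pm\gamma^0}^{\text{inst}}$, which decay exponentially in $|z^0|$. I would therefore argue that, using the Bessel asymptotics together with the support and convergence properties, the correction term is bounded on $M_K$ by its value at $|z^0|=K$ and so can be made uniformly as small as desired by choosing $K$ large; since $g_M$ is non-degenerate this forces $T$ to remain horizontally non-degenerate at every point of $N$ over $M_K$. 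The free $\mathbb{R}_{\geq 1}$-action generated by $\xi$ on $M_K$ is immediate from $M_K$ being a $\mathbb{C}^{\times}$-invariant domain in $\mathbb{C}^{n+1}\setminus\{0\}$ on which the rescaling acts freely.

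With these hypotheses in place, Theorem \ref{propcoordQK} produces the QK metric $g_{\overline{N}}$ of the form (\ref{coordQKmetric2}) associated to the PSK manifold $\mathbb{C}H^n$, and Proposition \ref{signatureQK} then directly supplies the conclusion: $\overline{N}_+$ is non-empty, $g_{\overline{N}}$ is positive definite there, and there is a neighborhood $\overline{N}_\infty\subset\overline{N}_+$ of $\rho=\infty$ on which $g_{\text{FS}}^c$ is also defined and positive definite, exhibiting $g_{\overline{N}}$ as a deformation of $g_{\text{FS}}^c$. The only genuine obstacle is the compatibility check, and even that is routine once the exponential decay of the Bessel functions in $|z^0|$ is exploited; everything else amounts to matching the example to the general framework of Section \ref{QKdefFS}.
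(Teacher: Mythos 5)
Your proposal is correct and follows essentially the same route as the paper: verify that the example fits the general framework (support property via $Z_{\gamma^0}=z^0\neq 0$ and finiteness of $\text{Supp}(\Omega)$, compatibility of $T$ restored on $M_K=\{|z^0|>K\}$ for $K$ large via the exponential decay of the Bessel functions, free $S^1$- and $\mathbb{R}_{\geq 1}$-actions), and then quote Theorem \ref{propcoordQK} and Proposition \ref{signatureQK} to conclude. One small slip in your final paragraph: $M_K$ is \emph{not} a $\mathbb{C}^{\times}$-invariant domain (scaling by $\lambda$ with $|\lambda|<1$ can move points out of it); it is only $S^1$- and $\mathbb{R}_{\geq 1}$-invariant, which is precisely what the framework requires and what you stated correctly at the outset.
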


An interesting and simpler case is given by taking $n=0$, where $M=\mathbb{C}^{\times}$ and $\overline{M}=\mathbb{C}H^{0}=\{*\}$ reduces to a point. The 1-loop corrected Ferrara-Sabharval metric $g_{\text{FS}}^c$ associated to this case is known as the universal hypermultiplet. For $c\geq 0$ it is a complete QK metric \cite[Corollary 15]{CDS} on 

\begin{equation}
    \overline{N}_{\text{UH}}:= \frac{T^*\mathbb{C}^{\times}\times \mathbb{R_{\sigma}}}{\text{Heis}(\Lambda^*)}\Big|_{\mathbb{R}_{\rho>0}} \,,
\end{equation}
where $\text{Heis}(\Lambda^*)\to \mathbb{C}^{\times}$ and its fiber-wise action on $T^*\mathbb{C}^{\times}\times \mathbb{R}_{\sigma}\to \mathbb{C}^{\times}$ is as in the proof of Proposition \ref{linebundle}, and $\mathbb{R}_{\rho>0}= \{z^0 \in \mathbb{C}^{\times}\;\; | \;\; \rho= 2\pi |z^0|^2-c>0 , \;\; \text{Arg}(z^0)=0\}$. $\overline{N}_{\text{UH}}$ has only one non-compact direction given by $\rho$, and hence two infinite ends corresponding to $\rho \to \infty$ and $\rho \to 0$. In \cite[Theorem 4.6]{CRT} it is shown that for $\rho_0>0$, the subset $\{\rho>\rho_0\}\subset \overline{N}_{\text{UH}}$ has finite volume, while $\{0<\rho<\rho_0\}\subset \overline{N}_{\text{UH}}$ has infinite volume. Hence, we obtain:

\begin{corollary}\label{finitevolend} Let $(\overline{N}_{+},g_{\overline{N}})$ be the (positive definite) QK metric from the previous example, associated to the PSK manifold $\overline{M}=\{*\}$. Then $(\overline{N}_{+},g_{\overline{N}})$ gives a deformation of the universal hypermultiplet $(\overline{N}_{\text{UH}},g_{\text{FS}}^c)$ near its infinite end of finite volume. 

\end{corollary}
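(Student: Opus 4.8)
The plan is to recognize the statement as the specialization of Corollary \ref{comhyperdef} to $n=0$, combined with the volume dichotomy of \cite{CRT}. When $n=0$ the PSK manifold $\overline{M}=\mathbb{C}H^0$ is a point, the CASK domain is $M=\mathbb{C}^\times$, and setting $\Omega=0$ in (\ref{coordQKmetric2}) recovers exactly the universal hypermultiplet $g_{\text{FS}}^c$ on $\overline{N}_{\text{UH}}$; for $c\geq 0$ this metric is complete \cite[Corollary 15]{CDS}, and $\overline{N}_{\text{UH}}$ has the two infinite ends corresponding to $\rho\to 0$ and $\rho\to\infty$.

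First I would invoke \cite[Theorem 4.6]{CRT} to identify which of these is the finite-volume end: the locus $\{\rho>\rho_0\}$ has finite volume while $\{0<\rho<\rho_0\}$ has infinite volume, so the end of finite volume is $\rho=\infty$. Next I would track the domain of the deformed metric. For the chosen $\Omega$ with $\text{Supp}(\Omega)=\{\pm\gamma^0\}$, the tensor $T$ of Definition \ref{defT} fails to be horizontally non-degenerate on all of $M$, but the exponential decay of the Bessel functions in $V^{\text{inst}}_{\gamma^0}$ restores non-degeneracy on $M_K=\{|z^0|>K\}$ for $K$ large; this is the $M_\infty$ to which Theorem \ref{propcoordQK} is applied. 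Under $\rho=2\pi|z^0|^2-c$ the region $M_K$ corresponds to $\{\rho>2\pi K^2-c\}$, which for $K$ large lies in $\{\rho>0\}$ and is a neighborhood of $\rho=\infty$.

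Combining these, Corollary \ref{comhyperdef} (with $n=0$) gives a neighborhood $\overline{N}_\infty\subset\overline{N}_+$ of $\rho=\infty$ on which $g_{\text{FS}}^c$ and $g_{\overline{N}}$ are both defined and positive definite, with $g_{\overline{N}}$ reducing to $g_{\text{FS}}^c$ when $\Omega=0$; by the first step this is a neighborhood of the finite-volume end of $\overline{N}_{\text{UH}}$, which is precisely the claim. All of the analytic content -- positive-definiteness, the deformation property, completeness for $c\geq 0$, and the volume computation -- is furnished by the cited results, so there is no substantial new obstacle. The only point demanding care is the bookkeeping that the end $\rho=\infty$ on which our construction lives coincides with the finite-volume end of \cite{CRT}, rather than the infinite-volume end $\rho\to 0$; this is immediate from $\rho=2\pi|z^0|^2-c$ together with $M_K=\{|z^0|>K\}$.
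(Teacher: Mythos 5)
Your proposal is correct and follows essentially the same route as the paper: the paper also derives this corollary by specializing the example to $n=0$, citing \cite[Corollary 15]{CDS} for completeness of the universal hypermultiplet when $c\geq 0$, invoking \cite[Theorem 4.6]{CRT} to identify $\rho=\infty$ as the finite-volume end, and then applying the deformation statement of Corollary \ref{comhyperdef} on the neighborhood of $\rho=\infty$ obtained from the restriction to $M_K$. Your explicit bookkeeping that $M_K=\{|z^0|>K\}$ corresponds to $\{\rho>2\pi K^2-c\}$ is exactly the point the paper leaves implicit, so nothing is missing.
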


\end{subsection}
\end{section}

\appendix
\begin{section}{Coordinate computation of the instanton deformation of \texorpdfstring{$g_{\text{FS}}^c$}{TEXT}}\label{appendixB}

In this appendix we prove Theorem \ref{propcoordQK}.

\begin{proof}(of Theorem \ref{propcoordQK})  We start computing $-\frac{1}{f}\widetilde{g}_P$, where

\begin{equation}
    \widetilde{g}_P:=g_P - \frac{2}{f}\sum_{i=0}^3 (\theta_i^P)^2\,;
\end{equation} and then deal with the restriction to $\overline{N}$, giving $g_{\overline{N}}=-\frac{1}{f}\widetilde{g}_P|_{\overline{N}}$. To ease the notations, we will omit pullbacks by projections. Furthermore, we denote $Z^i:=Z_{\partial_{y_i}}=z^i$.\\ 

We will use the following expressions for $\theta_i^P$, which follow from their definition (\ref{deftheta}) together with the formulas for $\eta$, $f$  and $\omega_{\alpha}$ given in (\ref{etadef}), (\ref{defff1}) and (\ref{omega12}), respectively:

\begin{align*}
        \theta_0^P&=-\frac{1}{2}df=-2\pi rdr-\frac{1}{2}df^{\text{inst}} \\
        \theta_1^P&=\eta+\frac{1}{2}\iota_X(2\pi g_N)=\eta +2\pi \iota_Vg_N=\Theta+2\pi \Big(\frac{1}{2}r^2\widetilde{\eta} + \sum_{\gamma}\Omega(\gamma)\eta_{\gamma}^{\text{inst}}\Big)\\
        &=d\sigma+2\pi \Big(\frac{1}{2}r^2\widetilde{\eta} - \frac{1}{8\pi^2}\langle \theta, d\theta \rangle+ \sum_{\gamma}\Omega(\gamma)\eta_{\gamma}^{\text{inst}}\Big)\\
        \theta_2^P&=\frac{1}{2}\iota_X(2\pi\omega_2)=2\pi \iota_V\omega_2= \Big(\text{Re}(Z^iY_i)-\text{Im}\Big(\sum_{\gamma}\Omega(\gamma)2\pi A_{\gamma}^{\text{inst}}(V)dZ_{\gamma}\Big)\Big)\\
        &=\text{Re}\Big(Z^iW_i +Z^iW_i^{\text{inst}}+2\pi i\sum_{\gamma}\Omega(\gamma) A_{\gamma}^{\text{inst}}(V)dZ_{\gamma}\Big)\\
        \theta_3^P&=-\frac{1}{2}\iota_X(2\pi \omega_1)=-2\pi \iota_V \omega_1= \Big(\text{Im}(Z^iY_i)+\text{Re}\Big(\sum_{\gamma}\Omega(\gamma)2\pi A_{\gamma}^{\text{inst}}(V)dZ_{\gamma}\Big)\Big)\\
        &= \text{Im}\Big(Z^iW_i+Z^iW_i^{\text{inst}}+2\pi i\Big(\sum_{\gamma}\Omega(\gamma) A_{\gamma}^{\text{inst}}(V)dZ_{\gamma}\Big)\Big)\,. \numberthis
\end{align*}

We start by computing

\begin{equation}
    \frac{2}{f_1}\eta^2-\frac{2}{f}(\theta_1^P)^2\subset \widetilde{g}_P\,.
\end{equation}

In order to do this, we use the expression of $\eta$ in (\ref{eta}), of $f_1$ and $f$ in (\ref{defff1}), and of $f^{\text{inst}}$ and $f_1^{\text{inst}}$ in (\ref{ff1inst}) to get

\begin{equation}\label{linebundleterm1}
        \frac{2}{f_1}\eta^2-\frac{2}{f}(\theta_1^P)^2=\frac{2}{f_1}\Big(\Theta +\frac{c}{2}\widetilde{\eta}+\frac{f_1}{2}\widetilde{\eta}+ 2\pi \eta^{\text{inst}}-\frac{f_1^{\text{inst}}}{2}\widetilde{\eta}\Big)^2  
        -\frac{2}{f}\Big(\Theta +\frac{c}{2}\widetilde{\eta}+\frac{f}{2}\widetilde{\eta}+ 2\pi \sum_{\gamma}\Omega(\gamma)\eta_{\gamma}^{\text{inst}}-\frac{f^{\text{inst}}}{2}\widetilde{\eta}\Big)^2\,.
\end{equation}
We wish to rewrite this in such a way that $\Theta + \frac{c}{2}\widetilde{\eta}$ only appears in one term. We will use the notations
\begin{equation}\label{etapm}
    \begin{split}
    \eta_+^{\text{inst}}&:=\frac{1}{2}\Big(\Big(2\pi \eta^{\text{inst}}-\frac{f_1^{\text{inst}}}{2}\widetilde{\eta}\Big)+\Big(2\pi \sum_{\gamma}\Omega(\gamma)\eta_{\gamma}^{\text{inst}}-\frac{f^{\text{inst}}}{2}\widetilde{\eta}\Big)\Big)\\
    \eta_-^{\text{inst}}&:=\frac{1}{2}\Big(\Big(2\pi \eta^{\text{inst}}-\frac{f_1^{\text{inst}}}{2}\widetilde{\eta}\Big)-\Big(2\pi \sum_{\gamma}\Omega(\gamma)\eta_{\gamma}^{\text{inst}}-\frac{f^{\text{inst}}}{2}\widetilde{\eta}\Big)\Big)\\
    \end{split}
\end{equation}
so that we can rewrite (\ref{linebundleterm1}) as follows

\begin{equation}\label{linebundleterm2}
    \begin{split}
        \frac{2}{f_1}&\eta^2-\frac{2}{f}(\theta_1^P)^2
        =\frac{2}{f_1}\Big(\Theta +\frac{c}{2}\widetilde{\eta}+\frac{f_1}{2}\widetilde{\eta}+  \eta_+^{\text{inst}}+\eta_-^{\text{inst}}\Big)^2   -\frac{2}{f}\Big(\Theta +\frac{c}{2}\widetilde{\eta}+\frac{f}{2}\widetilde{\eta}+ \eta_+^{\text{inst}}-\eta_-^{\text{inst}}\Big)^2\\
        =&\Big(\frac{2}{f_1}-\frac{2}{f}\Big)\Big(\Theta +\frac{c}{2}\widetilde{\eta}+\eta_{+}^{\text{inst}}\Big)^2+2\Big(\frac{2}{f_1}+\frac{2}{f}\Big)\Big(\Theta +\frac{c}{2}\widetilde{\eta}+\eta_{+}^{\text{inst}}\Big)\eta_{-}^{\text{inst}}\\
        &+\frac{2}{f_1}\Big(\frac{f_1}{2}\widetilde{\eta}+\eta_{-}^{\text{inst}}\Big)^2 -\frac{2}{f}\Big(\frac{f}{2}\widetilde{\eta}-\eta_{-}^{\text{inst}}\Big)^2\,.
       \end{split}
\end{equation}
After completing the square for the terms with $\Theta$ in (\ref{linebundleterm2}) and organizing the remaining terms, the previous expression can be written as 

\begin{equation}\label{linebundleterm3}
        \frac{2}{f_1}\eta^2-\frac{2}{f}(\theta_1^P)^2
        =\Big(\frac{2}{f_1}-\frac{2}{f}\Big)\Big(\Theta +\frac{c}{2}\widetilde{\eta}+  \eta_+^{\text{inst}}+\frac{f+f_1}{f-f_1}\eta_-^{\text{inst}}\Big)^2+\Big(\frac{f_1}{2}-\frac{f}{2}\Big)\Big(\widetilde{\eta}+\Big(\frac{4}{f_1-f}\Big)\eta_-^{\text{inst}}\Big)^2\,.
\end{equation}

We now compute 
\begin{equation}
    2\pi g_N -\frac{2}{f}(\theta_0^P)^2\subset \widetilde{g}_P\,.
\end{equation}
We recall that we can write $g_{N}$  as (see Corollary \ref{HKmetriccoord}):

\begin{equation}
    g_N=\pi^*_Mg_M+\sum_{\gamma}\Omega(\gamma)V_{\gamma}^{\text{inst}}|dZ_{\gamma}|^2 + \frac{1}{4\pi^2}Y_iM^{ij}\overline{Y}_j\,,
\end{equation}
while using (\ref{metriccone}) and omitting in the notation pullbacks by projections we have

\begin{equation}
    2\pi g_N -\frac{2}{f}(\theta_0^P)^2=2\pi \Big( dr^2 +r^2(\widetilde{\eta}^2-g_{\overline{M}}) + \sum_{\gamma}\Omega(\gamma)V_{\gamma}^{\text{inst}}|dZ_{\gamma}|^2 + \frac{1}{4\pi^2}Y_iM^{ij}\overline{Y}_j\Big) -\frac{1}{2f}(df)^2\,.
\end{equation}
Using the ``dilaton" coordinate $\rho=2\pi r^2-c$ we obtain:

\begin{equation}\label{metrictheta0term}
    \begin{split}
    2\pi g_N -\frac{2}{f}(\theta_0^P)^2=& -\frac{\rho +2c -f^{\text{inst}}}{4(\rho+c)(\rho+f^{\text{inst}})}d\rho^2 +(\rho+c)(\widetilde{\eta}^2-g_{\overline{M}}) + 2\pi\sum_{\gamma}\Omega(\gamma)V_{\gamma}^{\text{inst}}|dZ_{\gamma}|^2 + \frac{1}{2\pi}Y_iM^{ij}\overline{Y}_j \\
    &-\frac{1}{2(\rho+f^{\text{inst}})}\Big( 2d\rho df^{\text{inst}}+(df^{\text{inst}})^2\Big)\,,
    \end{split}
\end{equation}
where we combined the $d\rho^2$ terms coming from $2\pi dr^2$ and $df^2$. The change to $\rho$ is done to compare more easily with $g_{\text{FS}}^c$.  \\

Finally, since $\theta_2^P$ and $\theta_3^P$ are the real and imaginary part of the same complex form, we get

\begin{equation}\label{theta23term}
    -\frac{2}{f}((\theta_2^P)^2+(\theta_3^P)^2)=-\frac{2}{f} \Big|Z^iW_i+Z^iW_i^{\text{inst}}+i\Big(\sum_{\gamma}\Omega(\gamma)2\pi A_{\gamma}^{\text{inst}}(V)dZ_{\gamma}\Big)\Big|^2\,.
\end{equation}

Combining the results (\ref{linebundleterm3}), (\ref{metrictheta0term}), (\ref{theta23term}); writing the coefficients in terms of $\rho$, the constant $c$ and instanton correction terms; and using the notation
\begin{equation}
    f_+^{\text{inst}}:=(f^{\text{inst}}+f_1^{\text{inst}})/2, \;\;\;\;\;\;\; f_-^{\text{inst}}:=(f^{\text{inst}}-f_1^{\text{inst}})/2\,,
\end{equation}
we obtain
\begin{equation}\label{coordQKmetric}
    \begin{split}
    -\frac{1}{f}\widetilde{g}_P=&\frac{\rho+c}{\rho+f^{\text{inst}}}g_{\overline{M}}-\frac{2\pi}{\rho+f^{\text{inst}}}\sum_{\gamma}\Omega(\gamma)V_{\gamma}^{\text{inst}}|dZ_{\gamma}|^2\\
    &+\frac{1}{2(\rho+f^{\text{inst}})^2}\Big(\frac{\rho +2c -f^{\text{inst}}}{2(\rho+c)}d\rho^2+2d\rho df^{\text{inst}}+(df^{\text{inst}})^2\Big)\\
    &+4\frac{\rho+c+f_-^{\text{inst}}}{(\rho+f^{\text{inst}})^2(\rho+2c-f_1^{\text{inst}})}\Big(\Theta +\frac{c}{2}\widetilde{\eta}+\eta_+^{\text{inst}}+\frac{f_+^{\text{inst}}-c}{\rho+c+f_-^{\text{inst}}}\eta_-^{\text{inst}}\Big)^2\\
    &-\frac{1}{2\pi(\rho+f^{\text{inst}})}(W_i+W_i^{\text{inst}})(N+N^{\text{inst}})^{ij}(\overline{W}_j+\overline{W}_j^{\text{inst}})\\
    &+\frac{2}{(\rho+f^{\text{inst}})^2} \Big|Z^iW_i+Z^iW_i^{\text{inst}}+i\Big(\sum_{\gamma}\Omega(\gamma)2\pi A_{\gamma}^{\text{inst}}(V)dZ_{\gamma}\Big)\Big|^2\\
    &
    +\frac{\rho+c+f_-^{\text{inst}}}{\rho+f^{\text{inst}}}\Big(\widetilde{\eta}-\frac{2}{\rho+c+f_-^{\text{inst}}}\eta_-^{\text{inst}}\Big)^2-\frac{\rho+c}{\rho+f^{\text{inst}}}\widetilde{\eta}^2\,.
    \end{split}
\end{equation}

Now we wish to restrict the previous expression to $\overline{N}$ and use the coordinates $\rho$, $X^i$ (for $i=1,2,...,n$), $\widetilde{\theta}_i$, $\theta^i$ and $\sigma$ previously defined before Theorem \ref{propcoordQK}. We also recall the normalized central charge $X_{\gamma}=Z_{\gamma}/z^0$. In particular, for $\gamma \in \text{Supp}(\gamma)$ with $\gamma=n_i(\gamma)\partial_{y_i}$ we have $X_{\gamma}=n_i(\gamma)X^i$ with $X^0=1$.\\

 With respect to these coordinates,
    \begin{equation}\label{z^0rho}
        |z^0|^2=r^2e^{\mathcal{K}}=\frac{\rho+c}{2\pi}e^{\mathcal{K}}\,, 
    \end{equation}
    where $\mathcal{K}$ is a K\"{a}hler potential for the PSK metric $g_{\overline{M}}$ given by $\mathcal{K}=-\log(N_{ij}X^i\overline{X}^j)$.\\ 
    
    Using (\ref{z^0rho}) we can write
    \begin{equation}
        \widetilde{\eta}=\frac{1}{2}d^c\log(r^2)=d(\text{Arg}(z^0)) - \frac{1}{2}d^c\mathcal{K}\,,
    \end{equation}
    so that $\widetilde{\eta}|_{\overline{N}}=-\frac{1}{2}d^c\mathcal{K}$.\\

Using (\ref{z^0rho}) and $|z^0|^2|_{\overline{N}}=(z^0)^2|_{\overline{N}}$, we find that for $\gamma \in \text{Supp}(\Omega)$,

\begin{equation}\label{usefulexp}
dZ_{\gamma}/z^0\Big|_{\overline{N}}=n_i(\gamma)\Big(dX^i +X^i\frac{dz^0}{z^0}\Big)\Big|_{\overline{N}}=dX_{\gamma}+X_{\gamma}\Big(\frac{d\rho}{2(\rho+c)}+\frac{d\mathcal{K}}{2}\Big)\,,
\end{equation}
so that
\begin{equation}
    \begin{split}
-&\frac{2\pi}{\rho+f^{\text{inst}}}\sum_{\gamma}\Omega(\gamma)V_{\gamma}^{\text{inst}}|dZ_{\gamma}|^2\Big|_{\overline{N}}=-e^{\mathcal{K}}\frac{\rho+c}{\rho+f^{\text{inst}}}\sum_{\gamma}\Omega(\gamma)V_{\gamma}^{\text{inst}}\Big|dX_{\gamma} + X_{\gamma}\Big(\frac{d\rho}{2(\rho+c)}+\frac{d\mathcal{K}}{2}\Big)\Big|^2\,.
\end{split}
\end{equation}

Furthermore, we can write

\begin{equation}
    \begin{split}
        &\frac{2}{(\rho+f^{\text{inst}})^2} \Big|Z^iW_i+Z^iW_i^{\text{inst}}+i\Big(\sum_{\gamma}\Omega(\gamma)2\pi A_{\gamma}^{\text{inst}}(V)dZ_{\gamma}\Big)\Big|^2\Big|_{\overline{N}}\\
        &=\frac{(\rho+c)e^{\mathcal{K}}}{\pi(\rho+f^{\text{inst}})^2}\Big|X^iW_i+X^iW_i^{\text{inst}}|_{\overline{N}}+2\pi i\Big(\sum_{\gamma}\Omega(\gamma) A_{\gamma}^{\text{inst}}(V)\Big(dX_{\gamma} + X_{\gamma}\Big(\frac{d\rho}{2(\rho+c)}+\frac{d\mathcal{K}}{2}\Big)\Big)\Big|^2\,.
    \end{split}
\end{equation}

Hence, putting all together we obtain the desired expression for $g_{\overline{N}}$ given in (\ref{coordQKmetric2}).\\

We also give the coordinate expressions for $df^{\text{inst}}|_{\overline{N}}$, $W_i^{\text{inst}}|_{\overline{N}}$ and $\eta^{\text{inst}}_{\pm}|_{\overline{N}}$. By Lemma \ref{ff1} we have $df=-4\pi \iota_V\omega_3$, so by (\ref{invKF}):

\begin{equation}
        df^{\text{inst}}=2\pi\Big(\sum_{\gamma}\Omega(\gamma)V^{\text{inst}}_{\gamma}(Z_{\gamma}d\overline{Z}_{\gamma}+\overline{Z}_{\gamma}dZ_{\gamma})-\sum_{\gamma}\frac{i\Omega(\gamma)}{2\pi^2}\Big(\sum_{n>0}e^{in\theta_{\gamma}}|Z_{\gamma}|K_1(2\pi n|Z_{\gamma}|)\Big)d\theta_{\gamma}\Big)\,.
    \end{equation}
On the other hand, using (\ref{z^0rho}) and (\ref{usefulexp})

\begin{equation}
    \begin{split}
    Z_{\gamma}d\overline{Z}_{\gamma}+\overline{Z}_{\gamma}dZ_{\gamma}\Big|_{\overline{N}}&=\frac{(\rho+c)}{2\pi}e^{\mathcal{K}}\Big(X_{\gamma}\Big(d\overline{X}_{\gamma} + \overline{X}_{\gamma}\Big( \frac{d\rho}{2(\rho+c)}+\frac{d\mathcal{K}}{2}\Big)\Big)+\overline{X}_{\gamma}\Big(dX_{\gamma} + X_{\gamma}\Big( \frac{d\rho}{2(\rho+c)}+\frac{d\mathcal{K}}{2}\Big)\Big)\\
    &=\frac{(\rho+c)}{\pi}e^{\mathcal{K}}\Big(\text{Re}(X_{\gamma}d\overline{X}_{\gamma}) + |X_{\gamma}|^2\Big( \frac{d\rho}{2(\rho+c)}+\frac{d\mathcal{K}}{2}\Big)\Big)\,,
    \end{split}
\end{equation} so that

\begin{equation}\label{dfinst}
    \begin{split}
        df^{\text{inst}}|_{\overline{N}}=& 2(\rho+c)e^{\mathcal{K}}\sum_{\gamma}\Omega(\gamma)V^{\text{inst}}_{\gamma}|_{\overline{N}}\Big(\text{Re}(X_{\gamma}d\overline{X}_{\gamma}) + |X_{\gamma}|^2\Big( \frac{d\rho}{2(\rho+c)}+\frac{d\mathcal{K}}{2}\Big)\Big)\\
        &-\sqrt{\frac{\rho+c}{2\pi}}e^{\mathcal{K}/2}\sum_{\gamma}\frac{i\Omega(\gamma)}{\pi}\Big(\sum_{n>0}e^{in\theta_{\gamma}}|X_{\gamma}|K_1\Big(2\pi n\sqrt{\frac{\rho+c}{2\pi}}e^{\mathcal{K}/2}|X_{\gamma}|\Big)\Big)d\theta_{\gamma}\,,
    \end{split}
\end{equation}
where

\begin{equation}
    V_{\gamma}^{\text{inst}}|_{\overline{N}}=\frac{1}{2\pi}\sum_{n>0}e^{in\theta_{\gamma}}K_0\Big(2\pi n\sqrt{\frac{\rho+c}{2\pi}}e^{\mathcal{K}/2}|X_{\gamma}|\Big)\Big)\,.
\end{equation}

On the other hand, using (\ref{z^0rho}) and (\ref{usefulexp}) again, we find that

\begin{equation}\label{restrictedforms}
        \begin{split}
           \Big(\frac{dZ_{\gamma}}{Z_{\gamma}}-\frac{d\overline{Z}_{\gamma}}{\overline{Z}_{\gamma}}\Big)\Big|_{\overline{N}}&=\Big(\frac{dX_{\gamma}}{X_{\gamma}}-\frac{d\overline{X}_{\gamma}}{\overline{X}_{\gamma}}\Big)\\
           \iota_V(|dZ_{\gamma}|^2)|_{\overline{N}}&=-\frac{(\rho+c)e^{\mathcal{K}}}{2\pi}\text{Im}(X_{\gamma}d\overline{X}_{\gamma})\,.
        \end{split}
    \end{equation}
Using the equalities (\ref{restrictedforms}) we can compute $W_i^{\text{inst}}|_{\overline{N}}$ and $\eta_{\pm}^{\text{inst}}|_{\overline{N}}$. For $W_i^{\text{inst}}|_{\overline{N}}$ we obtain
\begin{equation}\label{winst}
    W_i^{\text{inst}}|_{\overline{N}}=-\sum_{\gamma}\Omega(\gamma)n_i(\gamma)\Big[ \frac{1}{2}\sqrt{\frac{\rho+c}{2\pi}}e^{\mathcal{K}/2}\sum_{n>0}e^{in\theta_{\gamma}}|X_{\gamma}|K_1\Big(2\pi n\sqrt{\frac{\rho+c}{2\pi}}e^{\mathcal{K}/2}|X_{\gamma}|\Big)\Big)\Big(\frac{dX_{\gamma}}{X_{\gamma}}-\frac{d\overline{X}_{\gamma}}{\overline{X}_{\gamma}}\Big)+iV_{\gamma}^{\text{inst}}|_{\overline{N}}d\theta_{\gamma}\Big]\,,
\end{equation}
while from (\ref{etagammainst}) and (\ref{etainstcoord}) we obtain
\begin{equation}\label{eta1inst}
    \begin{split}
    \eta_{\gamma}^{\text{inst}}|_{\overline{N}}&=\frac{i}{8\pi^2}\sqrt{\frac{\rho+c}{2\pi}}e^{\mathcal{K}/2}\Big(\sum_{n>0} \frac{e^{in\theta_{\gamma}}}{n}|X_{\gamma}|K_1\Big(2\pi n\sqrt{\frac{\rho+c}{2\pi}}e^{\mathcal{K}/2} |X_{\gamma}|\Big)\Big)\Big(\frac{dX_{\gamma}}{X_{\gamma}}-\frac{d\overline{X}_{\gamma}}{\overline{X}_{\gamma}}\Big)\\
    \eta^{\text{inst}}|_{\overline{N}}=&  \sum_{\gamma}\Omega(\gamma)\Big(\eta_{\gamma}^{\text{inst}}|_{\overline{N}}+V_{\gamma}^{\text{inst}}|_{\overline{N}}\text{Im}(X_{\gamma}d\overline{X}_{\gamma})\Big)-\frac{M^{ij}|_{\overline{N}}}{4\pi^2}W_i^{\text{inst}}(V)|_{\overline{N}}\text{Re}(W_j+W_j^{\text{inst}}|_{\overline{N}})\,.
    \end{split}
\end{equation}
From the last equations we can find the expressions for $\eta_{\pm}^{\text{inst}}|_{\overline{N}}$ from (\ref{etapm}). In particular, we conclude that $W_i^{\text{inst}}|_{\overline{N}}$ and $\eta_{\pm}^{\text{inst}}|_{\overline{N}}$ have no $d\rho$ components. 
\end{proof}
\end{section}

\bibliography{References}
\bibliographystyle{alpha}
\end{document}